\newcommand{\bomega}{\bm{\omega}}
\newcommand{\bbeta}{\bm{\beta}}
\newcommand{\bxi}{\bm{\xi}}
\newcommand{\bphi}{\bm{\phi}}
\newcommand{\bpsi}{\bm{\psi}}
\newcommand{\rmd}{{\rm{d}}}
\newcommand{\rme}{{\rm{e}}}
\newcommand{\prox}{{\rm prox}}
\newcommand{\by}{\mathbf{y}}
\newcommand{\bz}{\mathbf{z}}
\newtheorem{theorem}{Theorem}
\newtheorem{lemma}{Lemma}
\newtheorem{proposition}{Proposition}
\newtheorem{corollary}{Corollary}
\newtheorem{definition}{Definition}
\title{Proportional asymptotics of piecewise exponential proportional hazards models}
\author{
 Emanuele Massa \\
  Physics of Machine Learning and Complex Systems\\
  Radboud Universiteit\\
  Nijmegen, The Netherlands\\
  \texttt{emanuele.massa@donders.ru.nl} 
}
\begin{document}

\maketitle

\begin{abstract}
We study the flexible piecewise exponential model in a high dimensional setting where the number of covariates $p$ grows proportionally to the number of observations $n$ and under the hypothesis of random uncorrelated Gaussian designs. We prove rigorously that the optimal ridge penalized log-likelihood of the model converges in probability to the saddle point of a surrogate objective function.  The technique of proof is the Convex Gaussian Min-Max theorem of Thrampoulidis, Oymak and Hassibi.
An important consequence of this result, is that we can study the impact of the ridge regularization on the estimates of the parameter of the model and the prediction error as a function of the ratio $p/n > 0$. 
Furthermore, these results represent a first step toward rigorously proving the (conjectured) correctness of several results obtained with the heuristic replica method for the Cox semi-parametric model. 
\keywords{High dimensional Statistics, Survival analysis, Convex Gaussian Min-Max Theorem}
\end{abstract}

\section{INTRODUCTION}
\label{sec:1}
In medicine and healthcare, survival analysis is extensively used to assess the impact of medical interventions, predict patient outcomes, and estimate disease prognosis \cite{Klein_2005,Kalbfleisch_2011,Ramjith_2022,Cole_2010}.  
Survival data are often analysed in the framework of the Cox model \cite{Cox_1972}, which has several desirable properties in the classical regime where the number of covariates ($p$) is assumed to be small compared to the number of subjects ($n$) available in a study \cite{Gill_82, Tsiatis_1981}. However, in modern applications, $p$ is often comparable or larger than $n$. In this regime the maximum partial likelihood estimator might not even exist, and when it does is frequently biased and exhibits a large variance \cite{Coolen_17, Massa_2024}, making it an unreliable tool for statistical inference and prediction. Such undesirable properties are unavoidable and have been known since the 60' in the statistical community, starting from the works of Kolmogorov and Huber \cite{Wainwright_2019, El_Karoui_13}. 

In the last two decades a vast amount of theory on sparse regression in high dimensional generalized linear models \cite{Van_de_geer_2008}, and the Cox model \cite{Bradic_2011, Shengchun_2014}, has been established. This line of research builds on the assumption that the underlying \say{true} model is sparse and guarantees consistency of estimators obtained by a penalized maximum likelihood approach when : i) the regularizer is cleverly chosen, i.e. lasso and variants of the latter \cite{Tishbirani_1996},  ii) the regularization strength is appropriately tuned \cite{Buhlmann_2011} and iii) the number of components that are actually correlated with the outcome is sufficiently small, i.e. the true model is sufficiently sparse. The theoretical machinery of this line of work, i.e. the theory of empirical processes \cite{Wellner_2013, Van_der_Vaart_2000, Van_de_Geer_2000}, imposes only minor conditions over the distribution of the covariates and the results are often expressed as bounds on performance metrics, with precise probabilistic guarantees \cite{Van_de_Geer_2000, Wainwright_2019, Lederer_2021, Buhlmann_2011, Shengchun_2014}.  

Recently there has been growing theoretical interest in the proportional asymptotic regime where the number of covariates $p=p_n=\zeta n$ grows linearly with the number of observations $n\rightarrow \infty$. If a finite fraction of covariates is associated with the outcome, then even a properly tuned Lasso penalization cannot restore consistency (for all the parameters simultaneously). Hence, the standard theoretical analysis via the Lasso theory is not readily applicable. However, at the price of making more stringent assumptions on the data generating model, an alternative \say{average case} analysis can be carried out.  This line of work, which is sometimes referred to as \say{exact} asymptotics \cite{Celentano_2023, Hastie_2019,Dobriban_2015, Miolane_21}, builds on the standard and mathematically convenient (albeit often ideal) assumption of \say{Gaussian} covariates, i.e. covariates that follow a standard multivariate normal distribution. Heuristic theoretical results of this sort date back to \cite{Gardner_1988}, and has been made rigorous for Generalized Linear Models in the last decade, using different techniques such as Leave One Out method \cite{El_Karoui_18,Sur_19}, Approximate Message Passing \cite{Donoho_2013} and the Convex Gaussian Min max Theorem (CGMT)\cite{Thrampoulidis_2015,Thrampoulidis_2018,Loureiro_2022}.
The \say{average case} approach enables to precisely compute the average of various metrics of interest as a function of the \say{true} parameters, once a conditional model for the response given the covariates is specified. In practice the data generating model is unknown, and it is hard to estimate since the estimators for the parameters of the model are inconsistent. Hence, these results are not directly applicable to real data. However, such a \say{sharp} characterization of prediction and goodness of fit metrics allows comparing different methodologies (e.g. different regularization) in several hypothetical settings.

In this manuscript, we study the asymptotic behaviour of the ridge penalized maximum likelihood estimator for the piece-wise exponential proportional hazards model via a rigorous formulation of the typical case approach mentioned above in the setting where the predictors are uncorrelated, for analytical simplicity. 
The piece-wise exponential proportional hazards model is a parametric proportional hazards model where the base hazard rate function is postulated to be piece-wise constant, throughout a finite number ($\ell$) of \emph{user defined} intervals which does not grow with the sample size $n$ \cite{Friedman_82,Kalbfleisch_73}.
By applying the CGMT, we prove rigorously that the optimal ridge penalized log-likelihood of the model converges in probability to the saddle point of a surrogate objective function. Furthermore, we show that the expected (typical) value of several prediction metrics, and some training metrics, can be computed precisely by computing the location of this saddle point.
In principle, the Cox semi-parametric model can be recovered by an appropriate \emph{data dependent} choice of the time intervals, i.e. by assuming the base hazard rate to be constant between uncensored observations \cite{Breslow_72,Breslow_75}. However, the proof presented in this manuscript is not directly applicable then, since the present hinges on the fact the number of parameters describing the base hazard rate is finite as $n\rightarrow\infty$ (which is not the case for the Cox model). 

The manuscript is structured as follows. In section \ref{sec:2} we introduce the model under study and the related notation; the main theoretical results are presented in section \ref{sec:results}, whilst a brief sketch of the proof can be found in section \ref{sec:proof_sketch} referring to the appendices for the technical details.  We illustrate the agreement of theory and simulations in section \ref{sec:num_exp} via numerical experiments, where we quantify the prediction error by means of the concordance-index and an oracle version of the integrated brier score. Concluding remarks  are in section \ref{sec:conclusion}.  
 
\section{SETTING}
\label{sec:2}

We assume that the time at which a subject fails (i.e. experiences the event) is generated as
\begin{equation}
\label{law}
     Y|\mathbf{X}\sim f_0(.|\mathbf{X}'\bbeta_0) , \qquad \mathbf{X} \sim \mathcal{N}(\bm{0},\bm{I}_{p})\ , 
\end{equation}
for some unknown $f_0 : \mathbb{R}^+ \rightarrow \mathbb{R}^+$ and unknown $\bbeta_0 \in \mathbb{R}^p$, where we indicate with $'$ the scalar product.
The assumption that the covariates are uncorrelated is because we want to focus mostly on the part of the proof that depends on the model (which is a novel application). When the covariates are correlated, the proof is complicated by additional terms that depend on the spectrum of the population covariance matrix and the regularization function, see  for instance \cite{Loureiro_2022}. 
We focus on right censoring, being the most common in applications. In this scenario, instead of observing the actual response $Y$ directly, we observe the event time $T$ and the censoring indicator $\Delta$ defined as follows
\begin{equation}
\label{gen_proc}
    T = \min\{Y,C\} , \qquad \Delta =\bm{1}\big[T<C\big]
\end{equation}
where $C$ is a random variable assuming non-negative values, called the censoring time, i.e. the time at which the subject drops out of the study. We shall further assume that censoring is uninformative, i.e. $Y\perp C| \mathbf{X}$,  and that the censoring is at random and not dependent on the subject's covariates $C\perp\mathbf{X}$.
Given the data generated according to (\ref{law}) and (\ref{gen_proc}), the statistician assumes a proportional hazards model 
\begin{equation}
    \Delta,T|\mathbf{X} \sim \Big(\lambda(T|\bomega)\rme^{\mathbf{X}'\bbeta}\Big)^{\Delta}\rme^{-\Lambda(T|\bomega)\exp{(\mathbf{X}'\bbeta)}} \times f_C(T)^{1-\Delta} S_C(T)^{\Delta}, \quad  \lambda(.|\bomega) := \frac{\rmd }{\rmd t } \Lambda(t|\bomega), 
\end{equation}
where  $C\sim f_C$ and $S_C(x) = \int_{x}^{+\infty} f_c(y) \rmd y$ are, respectively, the density and cumulative distribution function of the censoring risk, $\bomega \in\mathbb{R}^\ell$ are parameters describing the shape of the base hazard rate for the primary risk $\lambda(.|\bomega):\mathbb{R}^{+}\rightarrow \mathbb{R}^+$ and $\Lambda(t|\bomega) := \int_0^{t} \lambda(s|\bomega) \rmd s$ is the corresponding cumulative hazard. 
The piece-wise exponential proportional hazards model postulates that $\lambda$ is a linear combination of piece-wise functions, i.e. zeroth order spline functions, with \emph{user specified knots} $\tau_1, \dots, \tau_{\ell+1}$ \cite{Friedman_82,Kalbfleisch_73} as follows 
\begin{equation}
    \label{def:h}
    \lambda(t|\bomega) := \sum_{k=1}^\ell \psi_k(t)\exp(\omega_k) = \bpsi(t)' \exp(\bomega),  \quad \bomega\in \mathbb{R}^\ell
\end{equation}
with 
\begin{equation}
    \bpsi(t) = \big(\psi_0(t), \dots, \psi_{\ell-1}(t)\big)^\top, \quad \psi_k(t) := \bm{I}[\tau_k < t<\tau_{k+1}], \quad k = 1, \dots, \ell \ .
\end{equation}
The parametric form of the cumulative hazard can be obtained via integration and reads
\begin{equation}
    \label{def:H}
    \Lambda(t|\bomega) := \bm{\Psi}(t)' \exp(\bomega)  
\end{equation}
where
\begin{equation}
    \bm{\Psi}(t) = \big(\Psi_0(t), \dots, \Psi_{\ell - 1}(t)\big)^\top, \quad  \Psi_k(t) := \bm{I}[t > \tau_k]\min\{t-\tau_k,\tau_{k+1}-\tau_k\} \ .
\end{equation}
We notice that this is a special case of a B-spline parameterization of the baseline hazard function.
The model is, in principle, arbitrary flexible in the sense that increasing the number of intervals allows fitting more and more precisely the shape of the \say{true} (unknown) baseline hazard rate. 
It has been noticed in literature \cite{Harrell_2001} that the locations of the knots do not generally impact on the quality of the fit, but the number of knots is crucial. In this manuscript we take a penalized splines (P-spline) approach, use several equispaced knots and penalize the respective coefficients via a ridge-like regularizer. To keep the setting simple we consider a simple uniform ridge regularization (although smoothness penalties based on finite differences might be easily considered in the present setting), i.e. the model is fitted to the data  $\{(\Delta_1, T_1, \mathbf{X}_1), \dots, (\Delta_n, T_n, \mathbf{X}_n)\}$, generated according to (\ref{law}) and (\ref{gen_proc}), by minimizing the following objective function
\begin{equation}
\label{def : objective}
    \mathfrak{L}_n(\bomega,\bbeta) = \frac{1}{n}  \sum_{i=1}^n \Big\{ g(\mathbf{X}_i'\bbeta,\bomega,T_i,\Delta_i)- \Delta_i \log \lambda(t_i|\bomega) \Big\} + \frac{1}{2}\eta\|\bbeta\|^2 + \frac{1}{2} \alpha \|\bomega\|^2 
\end{equation}
with the function $g$, defined as 
\begin{equation}
    g(x,\bomega,T,\Delta) = \Lambda(T|\bomega)\rme^{x} - \Delta x \ .
\end{equation}
The piece-wise exponential model lets us control the \say{complexity} of the parametrization of the base hazard rate $h$ via the number of intervals $\ell+1$  (user specified) in which the latter is assumed to be constant. This is very convenient when the sample size $n$ is large (as we shall assume): in this case the number of parameters defining the survival function is only $\ell$ numbers, and not $n$ as for Cox model \cite{Kvamme_2019,Kvamme_2021}.
The ridge penalization is often introduced in practice to improve numerical stability by enforcing strong convexity of $\mathfrak{L}_n(\bomega,\bbeta)$ in its arguments, so the minimizer always exists unique.

\section{TECHNICAL RESULTS}
\label{sec:results}
We need to introduce some definitions from convex analysis. In particular, we will need the following.
\begin{definition}[Moreau envelope and proximal operator]
    Given a convex function $f:\mathbb{R}\rightarrow\mathbb{R}$ the Moreau envelope function evaluated at $x$ with parameter $\nu \in \mathbb{R}^+$, is defined as 
    \begin{equation}
        \mathcal{M}_{f (.)}(x,\nu) := \underset{y}{\min} \Big\{\frac{1}{2\nu}(y-x)^2 + f(y)\Big\} \ .
    \end{equation}
    The point at which the minimum is attained is called the proximal operator of $f$
    \begin{equation}
        \prox_{f(.)}(x,\nu) := \underset{y}{\arg\min} \Big\{\frac{1}{2\nu}(y-x)^2 + f(y)\Big\} \ .
    \end{equation}
\end{definition}
These functions are frequently encountered in convex analysis problems as they possess several desirable properties, e.g.the set of minimizers of $\mathcal{M}_{f (.)}(.,\nu>0), \ \forall \nu>0$ coincides with that of the minimizers of $f(.)$ \cite{Rockafellar_1997}. 

The main technical results of the manuscript are established in the following theorems and corollaries. In short the asymptotic value of several quantities of interest derived from the Penalized Maximum Likelihood estimator can be obtained by studying an asymptotically equivalent problem, which is low dimensional and hence quickly and easily solvable (in particular for large values of $p$), instead of studying directly the original high dimensional problem (\ref{def : objective}).
\begin{theorem}[ASYMPTOTICALLY EQUIVALENT SCALAR OPTIMIZATION PROBLEM]
\label{theorem:equivalent_prob}
    Let $\zeta := \lim_{n\rightarrow \infty} p(n)/n$,  $\zeta \in \mathbb{R}^+$ and assume that the data are generated as in (\ref{law}, \ref{gen_proc}).
    Then, for any $\epsilon >0$,
    \begin{equation}
    \label{eq : asymptotic determinist equivalence}
         P\Bigg[\ \bigg|\underset{\bomega,\bbeta}{\min}\  \mathfrak{L}_n(\bomega,\bbeta) - \underset{\bomega,w,v}{\min} \ \underset{\phi\geq0}{\max} \ \underset{\tau>0}{\inf} \ \mathcal{L}(\bomega,w,v,\phi,\tau) 
         \bigg|>\epsilon \ \Bigg] \xrightarrow[n\rightarrow \infty]{} 0
    \end{equation}
    where 
    \begin{eqnarray}
    \label{ASOP}
        \mathcal{L}(\bomega,w,v,\phi,\tau) &:=& \mathbb{E}_{T,Z_0,Q}\Big[\mathcal{M}_{g(.,\bomega,\Delta,T)}\big(wZ_{0}  +v Q,\tau/\phi\big)- \Delta \log \lambda(T|\bomega)\Big] +   \nonumber \\
        &+&\phi\big(\tau/2- v\sqrt{\zeta}\big) +\frac{1}{2}\eta (v^2 + w^2)  + \frac{1}{2} \alpha \|\bomega\|^2
    \end{eqnarray}
    with  $Z_0,Q \sim \mathcal{N}(0,1)$, $Z_0\perp Q$.
\end{theorem}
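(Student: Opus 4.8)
The plan is to apply the Convex Gaussian Min-Max Theorem (CGMT) of Thrampoulidis, Oymak and Hassibi to the minimization of $\mathfrak{L}_n(\bomega,\bbeta)$. The first step is to bring the objective into the canonical bilinear form required by the CGMT. Writing $\mathbf{X} \in \mathbb{R}^{n\times p}$ for the design matrix with i.i.d.\ $\mathcal{N}(0,1)$ entries, the only place where $\bbeta$ enters the likelihood is through the linear statistics $u_i := \mathbf{X}_i'\bbeta$ and through the true signal via $\mathbf{X}_i'\bbeta_0$. I would introduce the standard decomposition $\bbeta = \tfrac{w}{\|\bbeta_0\|}\bbeta_0 + \bbeta_\perp$ (or, more precisely, track the two scalar order parameters $w := \bbeta'\bbeta_0/\|\bbeta_0\|$ and $v := \|P_{\bbeta_0^\perp}\bbeta\|$), so that $\mathbf{X}_i'\bbeta$ decomposes into a component along the fixed direction $\bbeta_0$, producing a genuine Gaussian $Z_0 := \mathbf{X}_i'\bbeta_0/\|\bbeta_0\|$ that couples to the response, plus an independent Gaussian of variance $v^2$ coming from the orthogonal part. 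After introducing Lagrange multipliers $\mathbf{r}$ to enforce $u_i = \mathbf{X}_i'\bbeta$, the problem becomes $\min_{\bomega,\bbeta,\mathbf{u}}\max_{\mathbf{r}}$ of a function that is linear in the Gaussian matrix $\mathbf{X}$ restricted to the orthogonal complement of $\bbeta_0$ — exactly the $\mathbf{r}'\mathbf{G}\bbeta_\perp$ structure the CGMT requires, with everything else convex in the primal block and concave (in fact linear) in $\mathbf{r}$.

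The second step is to invoke the CGMT to pass to the Auxiliary Optimization (AO) problem, in which $\mathbf{G}\bbeta_\perp$ is replaced by $\|\bbeta_\perp\|\,\mathbf{h} + \|\mathbf{r}\|\,\mathbf{g}$ with $\mathbf{h}\in\mathbb{R}^n$, $\mathbf{g}\in\mathbb{R}^p$ standard Gaussian vectors. Here one must check the CGMT hypotheses: compactness of the feasible sets (obtained by first arguing, using strong convexity from the ridge terms $\tfrac12\eta\|\bbeta\|^2+\tfrac12\alpha\|\bomega\|^2$, that the minimizers of $\mathfrak{L}_n$ lie in a bounded ball with probability tending to one, so the optimization may be restricted to a large compact set without changing the value), and convexity–concavity of the resulting objective. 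The third step is to simplify the AO. Optimizing over the direction of $\mathbf{r}$ and over $\bbeta_\perp$ (which now appears only through its norm $v$ and the scalar $\mathbf{g}'\bbeta_\perp$) collapses the $p$-dimensional part: $\tfrac1n\mathbf{g}'\bbeta_\perp \to v\sqrt{\zeta}$ by concentration, producing the term $-\phi v\sqrt{\zeta}$, while optimizing the quadratic-plus-linear part in the magnitude of $\mathbf{r}$ (call it $\phi\ge 0$) and completing the square in the $u_i$ variables converts the per-sample term into a Moreau envelope $\mathcal{M}_{g(.,\bomega,\Delta,T)}(wZ_{0,i}+v\,h_i/\sqrt n\cdot(\dots),\tau/\phi)$; the auxiliary variable $\tau>0$ is the rescaled magnitude of the $\mathbf{h}$-direction that remains after the square completion, yielding the $\phi\tau/2$ term. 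One then recognizes that $wZ_{0,i}+vQ_i$ with $Q_i$ standard Gaussian is exactly the argument appearing in the statement.

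The fourth and final step is to show the (random) AO objective, which is a finite sum of i.i.d.\ terms in the low-dimensional variables $(\bomega,w,v,\phi,\tau)$ together with deterministic penalty terms, converges uniformly (over the relevant compact set) in probability to its expectation $\mathcal{L}(\bomega,w,v,\phi,\tau)$ of \eqref{ASOP}; this is a uniform law of large numbers, for which one needs local-Lipschitz/equicontinuity bounds on the Moreau envelope in its parameters (standard, since Moreau envelopes are $1$-Lipschitz in the center and monotone and jointly well-behaved in the smoothing parameter) plus a dominating integrable envelope using the moment assumptions on $T$ and the Gaussians. Convergence of the finite-dimensional min-max values then follows from convergence of the objective plus the convexity–concavity structure (so that $\min\max$ and limits may be exchanged, e.g.\ via standard epi-convergence / Rockafellar-type arguments). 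The main obstacle, and the genuinely new part relative to existing GLM analyses, is the bookkeeping in step three: because the model has the extra finite-dimensional nuisance block $\bomega$ entering nonlinearly through $\Lambda(T|\bomega)$ inside $g$, one must carry $\bomega$ along as a free (low-dimensional) optimization variable throughout the CGMT reduction and verify that the joint convexity in $(\bomega,\bbeta,\mathbf{u})$ needed for the CGMT is genuinely present — $g(x,\bomega,T,\Delta)=\Lambda(T|\bomega)e^x-\Delta x$ is convex in $x$ for fixed $\bomega$ and the map $\bomega\mapsto\Lambda(T|\bomega)=\bm\Psi(T)'\exp(\bomega)$ is convex, but joint convexity in $(x,\bomega)$ is the point that must be checked carefully (it holds because $e^{x}\sum_k\Psi_k e^{\omega_k}=\sum_k\Psi_k e^{x+\omega_k}$ is a sum of exponentials of affine functions, hence convex), and similarly one must ensure the Moreau-envelope step is still valid with this $\bomega$-dependence frozen inside $g$.
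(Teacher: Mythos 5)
Your proposal follows essentially the same route as the paper: Lagrangian reformulation into the bilinear CGMT form, decomposition of $\bbeta$ along $\bbeta_0$ and its orthogonal complement, passage to the auxiliary optimization, scalarization by optimizing over directions (with $\|\mathbf{G}\|/\sqrt{n}\to\sqrt{\zeta}$ giving the $-\phi v\sqrt{\zeta}$ term), recognition of the Moreau envelope via the square-root variational identity introducing $\tau$, and finally a law of large numbers combined with convexity--concavity (the convexity lemma) to pass from pointwise convergence to convergence of the min--max value. Your closing remarks on carrying the finite-dimensional block $\bomega$ through the reduction and on the joint convexity of $g$ in $(x,\bomega)$ correctly identify the model-specific points the paper verifies in its appendices.
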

The convergence in probability of the optimum value of the objective function implies the following.
\begin{theorem}[REPLICA SYMMETRIC EQUATIONS]
\label{theorem:rs_eqs}
Let $\hat{\bbeta}_n,\hat{\bomega}_n$ be the (unique) minimizer of $\mathfrak{L}_n$ as defined in (\ref{def : objective}), then
    \begin{eqnarray}
        \label{w_n}
        &&\hat{w}_n := \frac{\bbeta_0'\hat{\bbeta}_n}{\|\bbeta_0\|}\xrightarrow[n\rightarrow \infty]{P} w_{\star}\\
        \label{v_n}
        &&\hat{v}_n :=\big\|\mathbf{P}_{\perp\bbeta_0}\hat{\bbeta}_{n}\big\|= \Big\|\Big(\bm{I} - \frac{\bbeta_0\bbeta_0'}{\|\bbeta_0\|^2}\Big)\hat{\bbeta}_n\Big\| \xrightarrow[n\rightarrow \infty]{P} v_{\star}\\
        \label{lambda_n}  
        &&\hat{\bomega}_n \xrightarrow[n\rightarrow \infty]{P} \bomega_{\star} \ .
    \end{eqnarray}
    The values $\bomega_{\star},w_{\star},v_{\star},\tau_{\star},\phi_{\star}$ identify the saddle point of $\mathcal{L}$ and solve the following set of self-consistent equations
    \begin{eqnarray}
    \label{rs1}
        v^2 \zeta &=& \mathbb{E}_{T,Z_0,Q}\Big[\big\|\hat{\xi} - wZ_0  - vQ\big\|^2\Big] \\
    \label{rs2}
        w(1+\eta\tau/\phi)&=& \mathbb{E}_{T,Z_0,Q}\Big[Z_0\hat{\xi}\Big]\\
    \label{rs3}
         v(1-\zeta +\eta \tau/\phi) &=&  \mathbb{E}_{T,Z_0,Q}\Big[Q\hat{\xi}\Big]\\
    \label{rs4}
         \tau &=& v\sqrt{\zeta}\\
    \label{rs5}
        \omega_k &=& \frac{1}{\eta \alpha} \mathbb{E}\Big[\Delta\psi_k(T)\Big] - W_0\bigg(\frac{1}{\eta \alpha} \mathbb{E}\Big[\rme^{\hat{\xi}}\psi_k(T)\Big] \exp\Big\{ \frac{1}{\eta \alpha} \mathbb{E}\Big[\Delta \Psi_k(T)\Big]\Big\}\bigg), \ k = 1, \dots, \ell , 
    \end{eqnarray}
    where  $W_0$ is the (real branch of) Lambert W - function \cite{Corless_1996}, defined as the (real) solution of the equation $W_0(x)\exp\{W_0(x)\} = x$ and 
    \begin{eqnarray}
    \label{prox}
        \hat{\xi}(Z_0,Q,T) &:=&\prox_{g(.,\bomega,\Delta,T)}(wZ_0 + v Q,\tau/\phi)=\nonumber \\
        &=&wZ_0 + v Q +\Delta \tau/\phi - W_0\Big(\tau \Lambda(T|\bomega)\rme^{\Delta\tau/\phi +wZ_0 + v Q}\Big)   \ .
    \end{eqnarray}
\end{theorem}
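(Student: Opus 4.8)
The strategy is to take Theorem~\ref{theorem:equivalent_prob} as given and to (i) upgrade the convergence of the optimal \emph{value} in (\ref{eq : asymptotic determinist equivalence}) to convergence of the summary statistics $(\hat w_n,\hat v_n,\hat\bomega_n)$ of the minimizer by a strong-convexity / deviation-set argument of the kind standard in the CGMT literature, and then (ii) read off (\ref{rs1})--(\ref{rs5}) as the first-order saddle-point conditions of $\mathcal{L}$. For the preliminaries: since $g(\cdot,\bomega,T,\Delta)$ is convex and $\mathfrak{L}_n$ in (\ref{def : objective}) carries the ridge terms $\tfrac12\eta\|\bbeta\|^2+\tfrac12\alpha\|\bomega\|^2$, $\mathfrak{L}_n$ is $\mu$-strongly convex with $\mu=\min\{\eta,\alpha\}$ uniformly in $n$, so the minimizer $(\hat\bomega_n,\hat\bbeta_n)$ is unique and $\hat w_n,\hat v_n,\hat\bomega_n$ are well-defined. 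One also checks that the reduced objective in (\ref{ASOP}) is jointly convex in $(\bomega,w,v,\tau)$ for each fixed $\phi>0$: the Moreau-envelope term is jointly convex because $\mathcal{M}_f(x,\nu)$ is jointly convex in $(x,\nu>0)$ (it is an infimal projection of the perspective of a quadratic plus $f$) and $g$ is jointly convex in its first argument and $\bomega$ (a nonnegative combination of exponentials of affine functions, using $\Lambda(t|\bomega)=\bm{\Psi}(t)'\exp(\bomega)$), while the remaining terms are affine in $(\tau,v)$ plus the strongly convex ridge parts. Consequently $\sup_{\phi\ge0}\inf_{\tau>0}\mathcal{L}$ is $\mu$-strongly convex in $(w,v,\bomega)$, the saddle point $(\bomega_\star,w_\star,v_\star,\phi_\star,\tau_\star)$ is unique, and coercivity together with $\zeta<\infty$ prevents escape to the boundary.

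\emph{Convergence of the order parameters.} Fix $\delta>0$. By coercivity of $\mathfrak{L}_n$ (modulus $\mu$) and $\min\mathfrak{L}_n=O_P(1)$, the minimizer lies, with probability $\to1$, in a large fixed ball $\mathcal{B}$. Let $\mathcal{S}_\delta\subset\mathcal{B}$ be the subset on which $w(\bbeta):=\bbeta_0'\bbeta/\|\bbeta_0\|$, $v(\bbeta):=\|\mathbf{P}_{\perp\bbeta_0}\bbeta\|$ and $\bomega$ satisfy $|w(\bbeta)-w_\star|\ge\delta$, or $|v(\bbeta)-v_\star|\ge\delta$, or $\|\bomega-\bomega_\star\|\ge\delta$. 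Re-running the Gaussian-comparison reduction that underlies Theorem~\ref{theorem:equivalent_prob} with the primal minimization constrained to $\mathcal{S}_\delta$ — legitimate because the constraint is a closed condition on $(\bomega,\bbeta)$ depending only on the order parameters $(w,v,\bomega)$ that the comparison tracks, so it is inherited by the scalar objective (\ref{ASOP}) restricted to the corresponding set of $(w,v,\bomega)$, with the $\sup_{\phi}\inf_{\tau}$ left intact — and invoking the universally valid (Gordon) direction of the CGMT, one obtains that $\min_{\mathcal{S}_\delta}\mathfrak{L}_n$ is, with probability $\to1$, at least the so-constrained value of (\ref{ASOP}). By the $\mu$-strong convexity of $\sup_{\phi}\inf_{\tau}\mathcal{L}$ in $(w,v,\bomega)$ noted above, this constrained value exceeds the saddle value by at least $c(\delta):=\tfrac{\mu}{2}\delta^2>0$. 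Since Theorem~\ref{theorem:equivalent_prob} forces $\min\mathfrak{L}_n$ within $c(\delta)/2$ of the saddle value with probability $\to1$, the unique minimizer must lie outside $\mathcal{S}_\delta$ with probability $\to1$; as $\delta$ is arbitrary this is (\ref{w_n})--(\ref{lambda_n}).

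\emph{The self-consistent equations.} Equations (\ref{rs1})--(\ref{rs5}) are the stationarity conditions of $\mathcal{L}(\bomega,w,v,\phi,\tau)$ at the (interior) saddle point. Using $\partial_x\mathcal{M}_f(x,\nu)=(x-\prox_f(x,\nu))/\nu$, $\partial_\nu\mathcal{M}_f(x,\nu)=-(x-\prox_f(x,\nu))^2/(2\nu^2)$, and the envelope identity $\nabla_\bomega\mathcal{M}_{g(\cdot,\bomega,\Delta,T)}(x,\nu)=\nabla_\bomega g(\hat\xi,\bomega,\Delta,T)=e^{\hat\xi}\nabla_\bomega\Lambda(T|\bomega)$, stationarity in $w$ and in $v$ gives the linear-response relations (\ref{rs2}) and (\ref{rs3}) after clearing the common factor $\phi/\tau$ (the $\eta\tau/\phi$ terms are the ridge contribution, and (\ref{rs3}) further uses (\ref{rs4})), and stationarity in $(\tau,\phi)$ gives (\ref{rs1}) and (\ref{rs4}). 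Stationarity in each $\omega_k$ produces, since $\lambda,\Lambda$ are piecewise constant so that $\partial_{\omega_k}\log\lambda(T|\bomega)=\psi_k(T)$ and $\partial_{\omega_k}\Lambda(T|\bomega)=\Psi_k(T)e^{\omega_k}$, a scalar transcendental equation of the form $a\,e^{\omega_k}+b\,\omega_k+c=0$; substituting a suitably shifted $\omega_k$ for $z$ turns it into $z\,e^{z}=\mathrm{const}$, solved by the real branch $W_0$, which is (\ref{rs5}). The closed form (\ref{prox}) of $\hat\xi$ follows the same way from its own first-order condition $\tfrac{\phi}{\tau}(\xi-x)+\Lambda(T|\bomega)e^{\xi}-\Delta=0$.

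\emph{Main obstacle.} The delicate step is the second paragraph: one must re-instrument the several scalarization steps behind Theorem~\ref{theorem:equivalent_prob} with the extra constraint $\mathcal{S}_\delta$ present and check that the Gordon one-sided inequality survives each intermediate $\sup$/$\inf$ exchange and the introduction of the auxiliary scalars $\phi,\tau$ — i.e. that the correspondence ``constrained primal problem $\leftrightarrow$ constrained scalar problem'' holds exactly at the level of lower bounds — together with the compactness bookkeeping (attainment of $\inf_\tau$ and $\sup_\phi$ on a compact effective range, no boundary escape) needed for the gap $c(\delta)$ to be genuinely bounded away from zero. By contrast, once the saddle point is known to be interior, the derivation of (\ref{rs1})--(\ref{rs5}) in the last paragraph is mechanical.
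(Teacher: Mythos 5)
Your proposal is correct and follows essentially the same route as the paper: the paper's Appendix on convergence of the minimizer likewise defines the deviation set in the order parameters $(w,v,\bomega)$, uses the convexity lemma to get uniform convergence of $\mathcal{L}_n$ on the compact complement, exploits strong convexity of $\mathcal{L}$ to obtain the strict gap, and invokes the second (Gordon lower-bound) implication of the CGMT, while the self-consistent equations are read off as stationarity conditions using the Moreau-envelope derivative formulas. One small remark: carrying out your mechanical stationarity computation in $\omega_k$ with the paper's own derivative propositions yields $\alpha\omega_k+e^{\omega_k}\mathbb{E}[\Psi_k(T)e^{\hat\xi}]=\mathbb{E}[\Delta\psi_k(T)]$, whose Lambert-$W$ solution has $1/\alpha$ (not $1/(\eta\alpha)$) and $\Psi_k$ and $\psi_k$ interchanged relative to the printed (\ref{rs5}), so your derivation is consistent with the appendix rather than with the displayed formula.
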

The theorems (\ref{theorem:equivalent_prob},\ref{theorem:rs_eqs}) above are a generalization of previous results \cite{Thrampoulidis_2018,Loureiro_2022} to the piece-wise exponential model. We briefly sketch the proof ideas in the next section, and we delegate the details to the supplementary material \ref{appendix:pointwise},\ref{appendix:saddle_convergence},\ref{appendix:conv_minimizer}.
A consequence of the theorem above is the following.
\begin{corollary}[SURROGATE FOR OUR OF SAMPLE LINEAR PREDICTOR]
\label{corollary:surr_lp_oos}
     Let $f:\mathbb{R}^{1+l} \rightarrow\mathbb{R}$ and $\tilde{\mathbf{X}}$ a newly generated covariate vector
    \begin{equation}
        f(\tilde{\mathbf{X}}'\hat{\bbeta}_n,\hat{\bomega}_n) \xrightarrow[n\rightarrow \infty]{d} f\big(w_{\star}Z_0+v_{\star}Q,\bomega_{\star}\big)
    \end{equation}
\end{corollary}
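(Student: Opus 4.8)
\emph{Proof proposal.} The plan is to condition on the training sample and exploit that the fresh covariate $\tilde{\mathbf{X}}\sim\mathcal{N}(\bm{0},\bm{I}_p)$ is independent of it — hence of $\hat{\bbeta}_n$ and $\hat{\bomega}_n$ — so that the out-of-sample linear predictor reduces to an explicit two–dimensional Gaussian whose two variance parameters are exactly the scalar summary statistics $\hat{w}_n,\hat{v}_n$ that Theorem~\ref{theorem:rs_eqs} controls, while $\hat{\bomega}_n$ is carried along unchanged.

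First I would split $\hat{\bbeta}_n$ along and orthogonally to $\bbeta_0$, writing $\hat{\bbeta}_n=\frac{\bbeta_0'\hat{\bbeta}_n}{\|\bbeta_0\|^2}\bbeta_0+\mathbf{P}_{\perp\bbeta_0}\hat{\bbeta}_n$, so that $\tilde{\mathbf{X}}'\hat{\bbeta}_n=\hat{w}_n\tilde{Z}_0+(\mathbf{P}_{\perp\bbeta_0}\hat{\bbeta}_n)'\tilde{\mathbf{X}}$ with $\tilde{Z}_0:=\bbeta_0'\tilde{\mathbf{X}}/\|\bbeta_0\|$. Conditionally on the training data, $\big(\bbeta_0'\tilde{\mathbf{X}},(\mathbf{P}_{\perp\bbeta_0}\hat{\bbeta}_n)'\tilde{\mathbf{X}}\big)$ is centered jointly Gaussian with vanishing cross-covariance, since $\bbeta_0\perp\mathbf{P}_{\perp\bbeta_0}\hat{\bbeta}_n$; hence its two coordinates are conditionally independent and the second has conditional variance $\|\mathbf{P}_{\perp\bbeta_0}\hat{\bbeta}_n\|^2=\hat{v}_n^2$. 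Introducing a standard normal $\tilde{Q}$ obtained by normalizing $(\mathbf{P}_{\perp\bbeta_0}\hat{\bbeta}_n)'\tilde{\mathbf{X}}$ (and, on the negligible event $\mathbf{P}_{\perp\bbeta_0}\hat{\bbeta}_n=\bm{0}$, by an independent auxiliary standard normal), one obtains the exact identity
\begin{equation}
   \tilde{\mathbf{X}}'\hat{\bbeta}_n=\hat{w}_n\,\tilde{Z}_0+\hat{v}_n\,\tilde{Q},\qquad (\tilde{Z}_0,\tilde{Q})\sim\mathcal{N}(\bm{0},\bm{I}_2),
\end{equation}
with $(\tilde{Z}_0,\tilde{Q})$ independent of the training data, hence of $(\hat{w}_n,\hat{v}_n,\hat{\bomega}_n)$.

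It then remains to pass to the limit. By Theorem~\ref{theorem:rs_eqs} we have $(\hat{w}_n,\hat{v}_n,\hat{\bomega}_n)\xrightarrow[n\rightarrow\infty]{P}(w_\star,v_\star,\bomega_\star)$, a deterministic point, while $(\tilde{Z}_0,\tilde{Q})$ has a fixed law independent of these statistics; Slutsky's theorem then gives the joint convergence $(\hat{w}_n,\hat{v}_n,\hat{\bomega}_n,\tilde{Z}_0,\tilde{Q})\xrightarrow[n\rightarrow\infty]{d}(w_\star,v_\star,\bomega_\star,Z_0,Q)$ with $(Z_0,Q)\sim\mathcal{N}(\bm{0},\bm{I}_2)$, and the continuous mapping theorem applied to $(w,v,\bomega,z,q)\mapsto(wz+vq,\bomega)$ yields $\big(\tilde{\mathbf{X}}'\hat{\bbeta}_n,\hat{\bomega}_n\big)\xrightarrow[n\rightarrow\infty]{d}\big(w_\star Z_0+v_\star Q,\bomega_\star\big)$. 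A final application of the continuous mapping theorem with the (continuous) function $f$ delivers the claim. The argument is essentially routine once Theorem~\ref{theorem:rs_eqs} is in hand; the points that require care are the conditional–Gaussian representation in the second step — making rigorous, via independence of the held-out sample, the \emph{exact} (not merely asymptotic) identity for $\tilde{\mathbf{X}}'\hat{\bbeta}_n$ jointly with $\hat{\bomega}_n$, including the degenerate cases $\mathbf{P}_{\perp\bbeta_0}\hat{\bbeta}_n=\bm{0}$ and $v_\star=0$ — together with the mild regularity needed on $f$ (continuity, or continuity off a set that is null under the absolutely continuous limit law of $w_\star Z_0+v_\star Q$) for the last step. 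I expect this conditional-Gaussian bookkeeping to be the main, and rather minor, obstacle.
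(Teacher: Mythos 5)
Your proposal is correct and follows essentially the same route as the paper: decompose $\tilde{\mathbf{X}}'\hat{\bbeta}_n$ along and orthogonal to $\bbeta_0$ to get the representation $\hat{w}_n\tilde{Z}_0+\hat{v}_n\tilde{Q}$ with $(\tilde{Z}_0,\tilde{Q})$ standard Gaussian independent of the training data, then conclude by Slutsky and the continuous mapping theorem. Your treatment is somewhat more careful than the paper's (the conditional-Gaussian justification of the exact distributional identity, the degenerate case $\mathbf{P}_{\perp\bbeta_0}\hat{\bbeta}_n=\bm{0}$, and the explicit continuity requirement on $f$, which the paper leaves implicit), but these are refinements of the same argument rather than a different proof.
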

\begin{proof}
    By Slutsky's Lemma \cite{Van_der_Vaart_2000} we have that for a \say{fresh} covariate vector $\tilde{\mathbf{X}}\sim \mathcal{N}(\bm{0},\bm{I}_{p})$
    \begin{equation}
        \tilde{\mathbf{X}}'\hat{\bbeta}_n = \frac{\tilde{\mathbf{X}}'\bbeta_0}{\|\bbeta_0\|} \frac{\bbeta_0'\hat{\bbeta}_n}{\|\bbeta_0\|} + \tilde{\mathbf{X}}'(\bm{I}_p - \frac{\bbeta_0\bbeta_0'}{\|\bbeta_0\|^2})\hat{\bbeta}_n \overset{d}{=}\tilde{Z}_0 w_n + \tilde{Q} v_n \xrightarrow[n\rightarrow \infty]{d}  \tilde{Z}_0 w_{\star} + \tilde{Q} v_{\star},\quad  \tilde{Z}_0,\tilde{Q} \sim \mathcal{N}(0,1) 
    \end{equation}
    because of the convergence in probability of $w_n,v_n$ (\ref{w_n},\ref{v_n}).
    The conclusion follows by the continuous mapping theorem \cite{Van_der_Vaart_2000} (page 7 theorem 2.3).
\end{proof}
The corollary above allows to precisely compute prediction metrics like the ones that we study in section (\ref{sec:num_exp}).

\section{SKETCH OF THE PROOF OF MAIN THEOREM 1}
\label{sec:proof_sketch}
Introducing Lagrange multipliers $\bphi$, one can equivalently re-write the minimization problem as a saddle point problem as follows 
\begin{eqnarray}
    \ell_n(\mathbf{X},\mathbf{T},\eta) &=& \underset{\bomega,\bbeta,\bxi}{\min}\ \underset{\bphi}{\sup} \Big\{\frac{1}{n} \sum_{i=1}^n \Big[ g(\xi_i,\bomega,\Delta_i,T_i) - \Delta_i \log \lambda(T_i|\bomega)\Big] - \bphi'\big(\bxi - \mathbf{X}\bbeta\big)  +\nonumber \\
    &+& \frac{1}{2}\eta \|\bbeta\|^2  + \frac{1}{2}\alpha \|\bomega\|^2\Big\} \ .
\end{eqnarray}
Notice that $\mathbf{X}\bbeta = \mathbf{X}\mathbf{P}_{\bbeta_0}\bbeta + \mathbf{X}\mathbf{P}_{\perp\bbeta_0}\bbeta  \overset{d}{=}  \mathbf{Z}_0 \beta_{\parallel} + \Tilde{\mathbf{X}}\bbeta_{\perp}$, where $\overset{d}{=}$ indicates equality in distribution with $\beta_{\parallel}:=\frac{\bbeta_0'\bbeta}{\|\bbeta_0\|}$, $\bbeta_{\perp}:= \mathbf{P}_{\perp\bbeta_0}\bbeta = (\bm{I} - \bbeta_0\bbeta_0'/\|\bbeta_0\|^2)\bbeta$, $ \mathbf{Z}_0:= \mathbf{X}\bbeta_0/\|\bbeta_0\|\sim \mathcal{N}(\bm{0},\bm{I}_n)$ and $\Tilde{\mathbf{X}} := \mathbf{X}\mathbf{P}_{\bbeta_0}\ (\Tilde{\mathbf{X}})_{i,j}\sim  \mathcal{N}(0,1)$ and $\Tilde{\mathbf{X}}\perp \mathbf{T}$.
Using this fact, we have reduced the original problem into the form 
\begin{eqnarray}
\label{PO}
    \ell_n(\mathbf{X},\mathbf{T},\eta ) &\overset{d}{=}& \underset{\bomega,\bbeta,\bxi}{\min} \ \underset{\bphi}{\sup} \bigg\{\frac{1}{n} \sum_{i=1}^n \Big[g(\xi_i,\bomega,\Delta_i,T_i) - \Delta_i \log \lambda(T_i|\bomega)\Big] - \bphi'\big(\bxi -\mathbf{Z}_0 \beta_{\parallel} - \Tilde{\mathbf{X}}\bbeta_{\perp}\big)  + \nonumber \\
    &+&\frac{1}{2}\eta \|\bbeta\|^2 + \frac{1}{2}\alpha \|\bomega\|^2 \bigg\}
\end{eqnarray}
that can be attacked with the Convex Gaussian Min-Max theorem (CGMT), first introduced in \cite{Thrampoulidis_2015} as a generalization of Gordon's Gaussian comparison inequalities \cite{Gordon_85}. The CGMT is reported without proof below for the reader's convenience. We refer to \cite{Thrampoulidis_2015,Thrampoulidis_2018} for a detailed proof.
\begin{theorem}[CONVEX GAUSSIAN MIN-MAX THEOREM]
\label{CGMT}
    Let $\mathcal{S}_{\by}\subset \mathbb{R}^n$, $\mathcal{S}_{\bz}\subset \mathbb{R}^p$ be compact and convex sets, $\psi$ be continuous and convex-concave on $\mathcal{S}_{\bz}\times \mathcal{S}_{\by}$, and $\mathbf{X}\in\mathbb{R}^{n\times p},\mathbf{Q}\in\mathbb{R}^{n},\mathbf{G}\in \mathbb{R}^p$ all have entries i.i.d. standard normal
    \begin{eqnarray}
        \Phi(\mathbf{X}) &:=& \underset{\by \in \mathcal{S}_{\by}}{\min} \ \underset{\bz \in \mathcal{S}_{\bz}}{\max} \Big\{\by'\mathbf{X}\bz + \psi(\bz,\by)\Big\}\nonumber\\
        \phi(\mathbf{G},\mathbf{Q}) &:=& \underset{\by \in \mathcal{S}_{\by}}{\min} \ \underset{\bz \in \mathcal{S}_{\bz}}{\max} \Big\{\|\by\|\mathbf{G}'\bz + \|\bz\| \mathbf{Q}'\by + \psi(\bz,\by)\Big\} \nonumber
    \end{eqnarray}
    Then
    \begin{equation}
        \forall \mu\in \mathbb{R},\  t \in \mathbb{R}^+, \  P\Big[ \Big|\Phi(\mathbf{X})-\mu\Big| > t \Big] \leq  2 P\Big[ \Big|\phi(\mathbf{G}, \mathbf{Q}) -\mu\Big| \geq  t \Big] \ .
    \end{equation}
    Furthermore, let $\mathcal{S}\subset \mathcal{S}_{\by}$ an open subset and $\mathcal{S}^c := \mathcal{S}_{\by}\setminus \mathcal{S}$. Denote $\phi_{\mathcal{S}^c}(\mathbf{G},\mathbf{Q})$ the optimal cost
    of the surrogate process, when the minimization is now constrained over $\mathcal{S}^c$. If there exist constants $\bar{\phi}<\bar{\phi}_{\mathcal{S}^c}$, such that $\phi(\mathbf{G},\mathbf{Q})\xrightarrow[n\rightarrow \infty]{P}\bar{\phi}$, $\phi_{\mathcal{S}^c}(\mathbf{G},\mathbf{Q})\xrightarrow[n\rightarrow \infty]{P}\bar{\phi}_{\mathcal{S}^c}$, then, denoting with $\hat{y}$ the value of $y$ at the saddle point, we have 
    \begin{equation}
    \label{asymptotic_cgmt}
        \lim_{n\rightarrow \infty} P\Big[\hat{\by} \in \mathcal{S}\Big] = 1\ .
    \end{equation}
\end{theorem}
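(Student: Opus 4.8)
\textbf{Sketch of a proof of Theorem \ref{CGMT}.} The plan is to reduce both assertions to Gordon's Gaussian min--max comparison inequality. Recall that if $(X_{\by,\bz})$, $(Y_{\by,\bz})$ are centred Gaussian processes indexed by $\by\in\mathcal{S}_{\by}$, $\bz\in\mathcal{S}_{\bz}$ (compact) with matching variances $\mathbb{E}[X_{\by,\bz}^{2}]=\mathbb{E}[Y_{\by,\bz}^{2}]$, with $\mathbb{E}[X_{\by,\bz}X_{\by,\tilde{\bz}}]\ge\mathbb{E}[Y_{\by,\bz}Y_{\by,\tilde{\bz}}]$ for all $\bz,\tilde{\bz}$, and with $\mathbb{E}[X_{\by,\bz}X_{\tilde{\by},\tilde{\bz}}]\le\mathbb{E}[Y_{\by,\bz}Y_{\tilde{\by},\tilde{\bz}}]$ whenever $\by\ne\tilde{\by}$, then $P[\min_{\by}\max_{\bz}X_{\by,\bz}\ge c]\le P[\min_{\by}\max_{\bz}Y_{\by,\bz}\ge c]$ for every $c$; this is itself proved by Gaussian interpolation, i.e.\ by differentiating $\mathbb{E}[F_{\beta}(\sqrt{t}\,X+\sqrt{1-t}\,Y)]$ for a smooth soft-min/soft-max surrogate $F_{\beta}$, integrating by parts via Stein's identity, matching the three covariance hypotheses to the signs of the second derivatives of $F_{\beta}$, and letting $\beta\to\infty$. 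I would apply it with $X_{\by,\bz}:=\|\by\|\,\mathbf{G}'\bz+\|\bz\|\,\mathbf{Q}'\by$ (the auxiliary bilinear part) and $Y_{\by,\bz}:=\by'\mathbf{X}\bz+g\,\|\by\|\,\|\bz\|$, where $g\sim\mathcal{N}(0,1)$ is independent of $\mathbf{X}$ and serves only to equalise second moments: a one-line Cauchy--Schwarz computation gives $\mathbb{E}[X_{\by,\bz}^{2}]=\mathbb{E}[Y_{\by,\bz}^{2}]=2\|\by\|^{2}\|\bz\|^{2}$, makes the ``same $\by$'' covariances coincide exactly, and gives, for $\by\ne\tilde{\by}$, the gap $(\|\by\|\,\|\tilde{\by}\|-\by'\tilde{\by})(\|\bz\|\,\|\tilde{\bz}\|-\bz'\tilde{\bz})\ge0$, so all hypotheses hold.

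Adding the common deterministic payoff $\psi$ merely shifts the thresholds in Gordon's inequality while leaving the covariance hypotheses untouched, so $\phi(\mathbf{G},\mathbf{Q})=\min_{\by}\max_{\bz}\{X_{\by,\bz}+\psi\}$ satisfies $P[\min_{\by}\max_{\bz}\{Y_{\by,\bz}+\psi\}<c]\le P[\phi(\mathbf{G},\mathbf{Q})<c]$. On the event $\{g\le0\}$ one has $\min_{\by}\max_{\bz}\{Y_{\by,\bz}+\psi\}\le\Phi(\mathbf{X})$, because adding a non-positive term inside a min--max cannot increase its value; hence $\{\Phi(\mathbf{X})<c\}\cap\{g\le0\}\subseteq\{\min_{\by}\max_{\bz}\{Y_{\by,\bz}+\psi\}<c\}$, and since $g\perp\mathbf{X}$ with $P[g\le0]=\tfrac12$ this yields the one-sided bound $P[\Phi(\mathbf{X})<c]\le2\,P[\phi(\mathbf{G},\mathbf{Q})<c]$, the factor $2$ being exactly the price of the auxiliary Gaussian. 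This step uses only compactness of $\mathcal{S}_{\by},\mathcal{S}_{\bz}$ (so that $\Phi$ is a continuous, hence measurable, function of $\mathbf{X}$), not convexity.

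For the reverse deviation I would invoke convexity. By Sion's minimax theorem, applicable because $\mathcal{S}_{\by},\mathcal{S}_{\bz}$ are compact convex and $\by'\mathbf{X}\bz+\psi(\bz,\by)$ is convex and lower semicontinuous in $\by$ and concave and upper semicontinuous in $\bz$, one has $\Phi(\mathbf{X})=\max_{\bz}\min_{\by}\{\by'\mathbf{X}\bz+\psi\}$, hence $-\Phi(\mathbf{X})=\min_{\bz}\max_{\by}\{\bz'(-\mathbf{X}')\by-\psi\}$, which is again of the primary form with the i.i.d.\ Gaussian matrix $-\mathbf{X}'$ and the roles of $\by$ and $\bz$ interchanged. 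Applying the one-sided bound just proved to this problem, undoing the negation, and using $\mathbf{G}\overset{d}{=}-\mathbf{G}$, $\mathbf{Q}\overset{d}{=}-\mathbf{Q}$ to identify its auxiliary with that of the original problem, I get $P[\Phi(\mathbf{X})>c]\le2\,P[\max_{\bz}\min_{\by}\{\|\by\|\mathbf{G}'\bz+\|\bz\|\mathbf{Q}'\by+\psi\}>c]$; weak duality $\max_{\bz}\min_{\by}(\cdot)\le\min_{\by}\max_{\bz}(\cdot)=\phi(\mathbf{G},\mathbf{Q})$ then upgrades the right-hand side to $2\,P[\phi(\mathbf{G},\mathbf{Q})>c]$. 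Taking $c=\mu-t$ in the first bound and $c=\mu+t$ in the second, adding, and using $\{\phi\le\mu-t\}\cup\{\phi\ge\mu+t\}=\{|\phi-\mu|\ge t\}$, gives $P[|\Phi(\mathbf{X})-\mu|>t]\le2\,P[|\phi(\mathbf{G},\mathbf{Q})-\mu|\ge t]$, which is the first claim.

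For the localisation claim, fix $\epsilon>0$ small enough that $\bar{\phi}+\epsilon<\bar{\phi}_{\mathcal{S}^{c}}-\epsilon$, possible since $\bar{\phi}<\bar{\phi}_{\mathcal{S}^{c}}$. The constrained primary cost $\Phi_{\mathcal{S}^{c}}(\mathbf{X}):=\min_{\by\in\mathcal{S}^{c}}\max_{\bz}\{\by'\mathbf{X}\bz+\psi\}$ still obeys the one-sided lower bound of the second paragraph, since that argument used only compactness of the $\by$-domain and $\mathcal{S}^{c}=\mathcal{S}_{\by}\setminus\mathcal{S}$ is compact ($\mathcal{S}$ being open), so its non-convexity is harmless; thus $P[\Phi_{\mathcal{S}^{c}}(\mathbf{X})<\bar{\phi}_{\mathcal{S}^{c}}-\epsilon]\le2\,P[\phi_{\mathcal{S}^{c}}(\mathbf{G},\mathbf{Q})<\bar{\phi}_{\mathcal{S}^{c}}-\epsilon]\to0$ by $\phi_{\mathcal{S}^{c}}\to\bar{\phi}_{\mathcal{S}^{c}}$ in probability, and likewise $P[\Phi(\mathbf{X})>\bar{\phi}+\epsilon]\le2\,P[\phi(\mathbf{G},\mathbf{Q})>\bar{\phi}+\epsilon]\to0$. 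On $\{\hat{\by}\in\mathcal{S}^{c}\}$ the unconstrained minimiser lies in $\mathcal{S}^{c}$, so $\Phi(\mathbf{X})=\Phi_{\mathcal{S}^{c}}(\mathbf{X})$; combining with the two displays, this event forces $\bar{\phi}_{\mathcal{S}^{c}}-\epsilon\le\Phi_{\mathcal{S}^{c}}(\mathbf{X})=\Phi(\mathbf{X})\le\bar{\phi}+\epsilon$ outside an event of probability $\to0$, contradicting the choice of $\epsilon$; hence $P[\hat{\by}\in\mathcal{S}^{c}]\to0$, i.e.\ $P[\hat{\by}\in\mathcal{S}]\to1$. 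The main obstacle is Gordon's inequality itself, where the sign bookkeeping in the interpolation argument must match the covariance hypotheses to the Hessian of the soft-min/soft-max surrogate exactly; a second, genuine subtlety is that, unlike the primary problem, the auxiliary objective is not jointly convex--concave (the terms $\|\by\|\,\mathbf{G}'\bz$ and $\|\bz\|\,\mathbf{Q}'\by$ need not be, respectively, convex in $\by$ and concave in $\bz$), which is why the reverse deviation has to be routed through Sion's theorem on the primary side together with weak duality rather than through a minimax exchange on the auxiliary side.
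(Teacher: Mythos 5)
The paper does not prove Theorem \ref{CGMT}: it states the result without proof and defers to \cite{Thrampoulidis_2015,Thrampoulidis_2018}, so there is no in-paper argument to compare against. Your sketch is a correct reconstruction of the proof given in those references --- the covariance bookkeeping for Gordon's comparison with the variance-equalising Gaussian $g$, the factor of $2$ from conditioning on $\{g\le 0\}$, the reverse deviation via Sion's minimax theorem plus weak duality on the auxiliary side, and the gap argument for the localisation claim (correctly noting that the lower-deviation bound needs no convexity of $\mathcal{S}^c$) all match the original; the only technicality left implicit is the passage from Gordon's inequality over finite index sets to compact ones by discretisation and continuity.
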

Theorem \ref{CGMT} above tells us that if one is able to prove that $\phi$ converges in probability to some deterministic value, then the same is true for $\Phi$.
The CGMT applies to min-max problems over compact, convex sets. Hence, we \say{artificially} restrict the saddle point problem (\ref{PO}) onto compact, convex sets. 
Intuition suggests that if a saddle point exists and the set is sufficiently large, then there is not going to be any difference between the bounded and unbounded problem. 
Hence, from now on the min over $\bbeta,\bxi$ and the max over $\bphi$ operations are understood over convex, compact sets (so that they actually exist).
Following Theorem \ref{CGMT}, we consider an auxiliary optimization problem
\begin{eqnarray}
\label{AO0}
    \Tilde{\ell}_n(\mathbf{Q},\mathbf{G},\mathbf{T})&=& \underset{\bomega,\bbeta,\bxi}{\min}\ \underset{\bphi}{\max}\bigg\{ \frac{1}{n} \sum_{i=1}^n \Big[ g(\xi_i,\bomega,\Delta_i,T_i)  - \Delta_i \log \lambda(T_i|\bomega) \Big]  + \nonumber \\
    &-&  \bphi'\big(\bxi - \beta_{\parallel}\mathbf{Z}_0  - \|\bbeta_{\perp}\|\mathbf{Q}\big) + \bbeta_{\perp}'\mathbf{G}\|\bphi\| + \frac{1}{2}\eta \|\bbeta\|^2 + \frac{1}{2}\alpha\|\bomega\|^2 \bigg\} 
\end{eqnarray}
with $\mathbf{G}\sim \mathcal{N}(\bm{0},\bm{I}_{(p-1)})$ and $\mathbf{Q}\sim \mathcal{N}(\bm{0},\bm{I}_{n})$, $\mathbf{G}\perp\mathbf{Q}$ and $\mathbf{G},\mathbf{Q}\perp \mathbf{T},\mathbf{Z}_0$.
We can optimize over the direction of $\bphi$ at fixed length $\|\bphi\| = \phi$
\begin{eqnarray}
\label{AO1}
    \Tilde{\ell}_n(\mathbf{Q},\mathbf{G},\mathbf{T}) &=& \underset{\bomega,\bbeta,\bxi}{\min}\ \underset{\phi\geq 0}{\max}\bigg\{ \frac{1}{n} \sum_{i=1}^n \Big[ g(\xi_i,\bomega,\Delta_i,T_i)  - \Delta_i \log \lambda(T_i|\bomega)\Big]+  \nonumber\\
    &+&  \phi\big\|\bxi - \beta_{\parallel}\mathbf{Z}_0  - \|\bbeta_{\perp}\|\mathbf{Q}\big\| + \bbeta_{\perp}'\mathbf{G}\phi  +\frac{1}{2}\eta \|\bbeta\|^2 + \frac{1}{2}\alpha \|\bomega\|^2 \bigg\} \ .
\end{eqnarray}
The problem above depends on $\bbeta_{\perp}$ via $\|\bbeta_{\perp}\|$ and $\cos \theta $, with $\theta$ the angle between $\bbeta_{\perp}$ and $\mathbf{G}$. Hence, it is not guaranteed to be convex-concave (as $cos$ is not convex on the whole interval $[0,2\pi]$), but it has been shown in \cite{Thrampoulidis_2018} (page 22 point 6) that (\ref{AO1}) can be used in place of (\ref{AO0}) in the CGMT as $n\rightarrow\infty$, i.e. the min-max order can be \say{swapped} asymptotically. At this point, the minimization over the direction of $\bbeta_{\perp}$ at fixed $\|\bbeta_{\perp}\| = v$ yields
\begin{eqnarray}
     \Tilde{\ell}_n(\mathbf{Q},\mathbf{G},\mathbf{T}) &=& \underset{\bomega,w,v}{\min} \ \underset{\phi}{\max}\ \underset{\bxi}{\min}\bigg\{ \frac{1}{n} \sum_{i=1}^n  \Big[ g(\xi_i,\bomega,\Delta_i,T_i) - \Delta_i \log \lambda(T_i|\bomega)\Big]  +\nonumber\\
     &+& \phi\big\|\bxi - w\mathbf{Z}_0  -v\mathbf{Q}\big\| - v \phi  \|\mathbf{G}\| + \frac{1}{2}\eta (v^2 + w^2) + \frac{1}{2}\alpha \|\bomega\|^2\bigg\} \ ,
\end{eqnarray}
where we also defined $w := \bbeta_{\parallel}$.
Following \cite{Thrampoulidis_2018}, we use the variational representation $\|\mathbf{y}\| = \underset{\tau\geq 0}{\inf} \Big\{\frac{1}{2}\Big(\tau\|\mathbf{y}\|^2 + \frac{1}{\tau}\Big) \Big\}$ for the norm.
Then 
\begin{eqnarray}
    \Tilde{\ell}_n(\mathbf{Q},\mathbf{G},\mathbf{T}) &=& \underset{\bomega,w,v}{\min} \ \underset{\phi}{\max} \ \underset{\bxi,\tau}{\inf} \bigg\{ \frac{1}{n} \sum_{i=1}^n \Big[ g(\xi_i,\bomega,\Delta_i,T_i) - \Delta_i \log \lambda(T_i|\bomega) \Big]  + \frac{1}{2\tau}\big\|\bxi - w\mathbf{Z}_0  - v\mathbf{Q}\big\|^2 + \nonumber \\
    &+&  \phi(\tau/2- v\|\mathbf{G}\|)  + \frac{1}{2}\eta (v^2 + w^2)+ \frac{1}{2}\alpha \|\bomega\|^2\bigg\} \ .
\end{eqnarray}
Taking the re-scaling $\tau\rightarrow \sqrt{n}\tau, \ \phi\rightarrow\phi/\sqrt{n}$ we recognize the (averaged) Moreau envelope (as defined in the main previous section)
\begin{eqnarray}
    &&\frac{1}{n}  \underset{\bxi}{\min}\Big\{  \sum_{i=1}^n  g(\xi_i,\bomega,\Delta_i,T_i)   + \frac{\phi}{2\tau}\big\|\bxi - w\mathbf{Z}_0  - v\mathbf{Q}\big\|^2\Big\} = \frac{1}{n} \sum_{i=1}^n \underset{\xi}{\min}\Big\{    g(\xi,\bomega,\Delta_i,T_i)   + \frac{\phi}{2\tau}\big( \xi - w Z_{0,i}  - v Q_i \big)^2\Big\}  \nonumber\\
    &&=\hspace{1cm}\frac{1}{n} \sum_{i=1}^n \mathcal{M}_{g(.,\bomega,\Delta_i,T_i)  }\big(wZ_{0,i} +  vQ_i,\tau/\phi\big)   \ .
\end{eqnarray}
Hence, we are finally left with the saddle point problem
\begin{equation}
    \Tilde{\ell}_n(\mathbf{Q},\mathbf{G},\mathbf{T}) =\underset{\bomega,w,v}{\min} \ \underset{\phi}{\max} \ \underset{\tau}{\inf} \ \mathcal{L}_n(\bomega,w,v,\phi,\tau)
\end{equation}
where
\begin{eqnarray}
    \mathcal{L}_n(\bomega,w,v,\phi,\tau) &=& \frac{1}{n} \sum_{i=1}^n \Big[\mathcal{M}_{g(.,\bomega,\Delta_i,T_i)  }\big(wZ_{0,i} +  vQ_i,\tau/\phi\big) - \Delta_i \log \lambda(T_i|\bomega)\Big]  + \phi\big(\tau/2- v\|\mathbf{G}\|/\sqrt{n}\big)+\nonumber\\
    &+& \frac{1}{2}\eta (v^2 + w^2)+ \frac{1}{2}\alpha \|\bomega\|^2 \ .\nonumber
\end{eqnarray}
The weak law of large numbers and a concentration argument for $\|\mathbf{G}\|$ imply, see appendix \ref{appendix:pointwise}, that 
\begin{equation}
\label{point_conv}
    \mathcal{L}_n(\bomega,w,v,\phi,\tau)  \xrightarrow[n \rightarrow \infty]{P} \mathcal{L}(\bomega,w,v,\phi,\tau) 
\end{equation}
pointwise for $\|\bomega\|\leq C_{\bomega},0\leq w\leq C_{\bbeta},0\leq v\leq C_{\bbeta}, \phi\geq 0$ and $\tau>0$, where 
\begin{eqnarray}
    \mathcal{L}(\bomega,w,v,\phi,\tau) &:=& \mathbb{E}_{T,Z_0,Q}\Big[\mathcal{M}_{g(.,\bomega,\Delta,T)  }\big(wZ_{0}  +v Q,\tau/\phi\big)-\Delta \log \lambda(T|\bomega)\Big] + \nonumber \\
    &+&\phi\big(\tau/2- v\sqrt{\zeta}\big) + \frac{1}{2}\eta (v^2 + w^2) + \frac{1}{2}\alpha \|\bomega\|^2 \ ,
\end{eqnarray}
with $Q\sim \mathcal{N}(0,1) $, $ Z_0 \sim \mathcal{N}(0,1)$, $Z_0\perp Q$.
The pointwise convergence in probability above, together with the fact that the functions $\mathcal{L}_n,\mathcal{L}$ are concave in $\phi$ and convex in $\bomega,w,v,\xi,\tau$ imply, see appendix \ref{appendix:saddle_convergence}, that
\begin{equation}
     \underset{\bomega,w,v}{\min} \ \underset{\phi>0 }{\max} \ \underset{\tau> 0}{\inf} \  \mathcal{L}_n(\bomega, w,v,\phi,\tau)  \xrightarrow[n\rightarrow \infty]{P} \underset{w,v}{\min} \ \underset{\phi>0}{\max} \ \underset{\tau>0}{\inf} \ \mathcal{L}(\bomega, w,v,\phi,\tau)  \ .
\end{equation}
By the CGMT, we then have the desirata, i.e. Theorem \ref{theorem:equivalent_prob}.
Furthermore the pointwise convergence (\ref{point_conv}) and the strict convexity of $\mathcal{L}$ in $w,v,\bomega$ implies the convergence in probability of the minimizer (\ref{w_n},\ref{v_n},\ref{lambda_n}) via (\ref{asymptotic_cgmt}) see appendix \ref{appendix:conv_minimizer}.

\section{NUMERICAL EXPERIMENTS}
\label{sec:num_exp}
In the following we simulate the model under study and compare various quantities, such as goodness of fit and prediction metrics, against the theory for different values of the regularizer $\eta$ which controls the amount of ridge shrinking on $\hat{\bbeta}_n$.  
The data simulations are carried out as follows. We take the sample size $n=400$ fixed and vary the number of covariates $p$ via $\zeta$ as $p = \zeta n$. The latent survival times are generated from a Log-logistic proportional hazard model
\begin{equation}
    Y_i|\mathbf{X}_i \sim   -\frac{\rmd }{\rmd  t }  S_0(t|\mathbf{X}_i), \quad  S_0(t|\mathbf{X}_i) = \exp\{- \Lambda_0(.)\exp(\mathbf{X}_i'\bbeta_0)\}, \ \Lambda_0(t) := \log \Big(1 +  t^2/2\Big)
\end{equation}
where $\mathbf{X}_i\sim \mathcal{N}(\bm{0},\bm{I}_p)$. Censoring is taken to be uniform between $\tau_1 =1$ and $\tau_2 = 3$. With these choices, the expected fraction of censored events is $40\%$, i.e. only $60\%$ of the $n$ subjects experiences the event on average. We sample $\bbeta_0 \sim \mathbb{S}_{p-1}$, i.e. the true vector of association parameter is drawn at random from the unit sphere in $\mathbb{R}^p$. Notice that our choice of $\bbeta_0$ does not imply any loss in generality since the distribution of $\mathbf{X}$ and the ridge penalty are rotationally invariant, hence any $\bbeta_0$ with the same length will yield the same statistics for the population of $Y_i$.

For each value of $\zeta$ (or equivalently $p$ at fixed $n$) we simulated $50$ datasets and computed the penalized maximum likelihood estimator $\hat{\bbeta}_n,\hat{\bomega}_n$ by numerical minimization of (\ref{def : objective}) along a regularization path for $\eta$ at fixed $\alpha= 0.01$. In all the plots that follow, the markers are empirical averages, while the error bars are empirical standard deviations computed over these realizations. 
We chose to use $11$ time points $\tau_{k=1}^\ell$ equispaced between $0$ and $3$ (the end of the study) as the knots of the piece-wise parametrization of the hazard rate, this allows for a flexible approximation of the hazard rate.
For the solution of the RS equations, we computed numerically the solution via fixed point iteration, to a tolerance of $1.0^{-8}$. The expectations in (\ref{rs1},\ref{rs2},\ref{rs3},\ref{rs5}) are approximated as population averages, with a population size $m= 2\cdot 10^{3}$.

Before proceeding to examine the results of the numerical experiments, we first need to introduce which metrics were employed to score the predictive ability of the model. The following subsection deals with this.

\subsection{Evaluation metrics for Survival Predictions}
In order to evaluate prediction accuracy in survival analysis, it is generally advised to always consider at least two metrics: one for the calibration and another for the discrimination ability of the model. Calibration, quoting from \cite{Steyenberg_2010}, \say{refers to the agreement between observed outcome and predictions}. Discrimination is the ability of the model to separate individuals with different risks scores. The rationale being that a good model should assign shorter survival times to subjects with higher risk score.  
To evaluate the discriminative ability of the model, we used Harrell's c index \cite{Harrell_82,Harrell_2001}
\begin{equation}
    HC_n =\frac{ \sum_{i=1}^n \Delta_i \sum_{j=1}^n \Theta(T_j-T_i) \bm{1}\Big[f(\mathbf{X}_j)>f(\mathbf{X}_i)\Big]}{ \sum_{i=1}^n \Delta_i \sum_{j=1}^n \Theta(T_j-T_i) } \ .
\end{equation}
This quantity is close to one if the model has perfect discrimination ability. Random guessing would give a c-index of $0.5$. 
Instead of evaluating the calibration of the model directly via the Brier Score, we compute the following,
\begin{equation}
\label{IBS_ideal}
    IBS_{ideal} = \mathbb{E}_{\mathbf{X}}\bigg[ \int \Big(\hat{S}(t|\mathbf{X}) -S_0(t|\mathbf{X})\Big)^2 \rmd t\bigg]  \ .
\end{equation}
This quantity measures the integrated Mean Squared Error of the estimated survival function with respect to the true one.
The \say{ideal} in underscore is due to the fact that the quantity above is what any estimator of the Integrated Brier Score (IBS) aims ideally to estimate. The ideal IBS (\ref{IBS_ideal}) can only be computed for simulated data because in that case we know the $S_0(.|\mathbf{X})$ used to generate the data. Notice that (\ref{IBS_ideal}) can only be used to score the model relative to another one, hence we prefer to use the ratio
\begin{equation}
\label{def : R_ibs}
    R_{IBS}= \frac{\mathbb{E}_{\mathbf{X}}\bigg[ \int \Big(\hat{S}(t|\mathbf{X}) -S_0(t|\mathbf{X})\Big)^2 \rmd t\bigg]}{\mathbb{E}_{\mathbf{X}}\bigg[ \int \Big(\hat{S}_{null}(t) -S_0(t|\mathbf{X})\Big)^2 \rmd t\bigg]}
\end{equation}
where $\hat{S}_{null}$ is the estimated survival function when no covariates are included in the model.
Thanks to corollary (\ref{corollary:surr_lp_oos}) we can easily compute the metrics above for the test set via the solution of the RS equations (see theorem \ref{theorem:rs_eqs}).

\subsection{Comparison of theory and simulations}
Figures (\ref{fig:1},\ref{fig:2}) show the comparison between the theoretical values obtained by solving the Replica Symmetric equations (i.e. the non-linear system in corollary \ref{theorem:rs_eqs}) as solid lines and the results obtained by the simulations as markers with error bars. As expected, because of the convergence in probability established in (\ref{theorem:rs_eqs}), the solid line describes accurately the position of the markers for different values of the ratio $\zeta$.   
We notice that, as expected, the test c-index is virtually independent of the penalization strength as one can see in (\ref{fig:2a}) and its maximal value decreases with $\zeta$.
Whilst from figure (\ref{fig:2b}) we deduce that as $\zeta$ increases, the minimum of $R_{IBS}$ is attained at a larger value of $\eta$, i.e. more regularization is needed just to do slightly better than the null model. 

\begin{figure}[t]
\begin{subfigure}{.47\textwidth}
  \includegraphics[width=\linewidth]{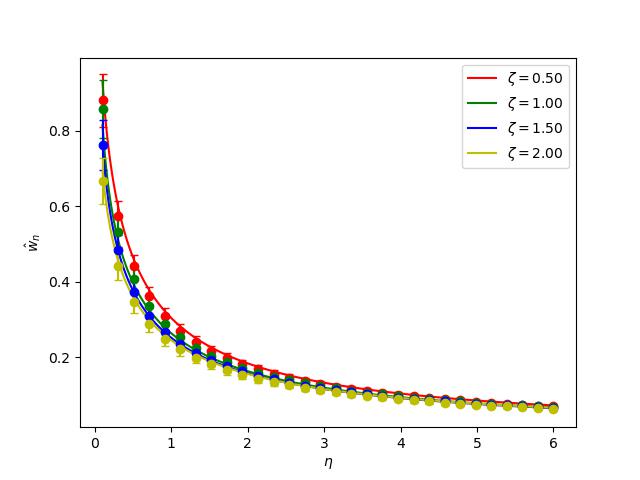}
  \caption{}
  \label{fig:1a}
\end{subfigure}
\hfill
\begin{subfigure}{.47\textwidth}
\includegraphics[width=\linewidth]{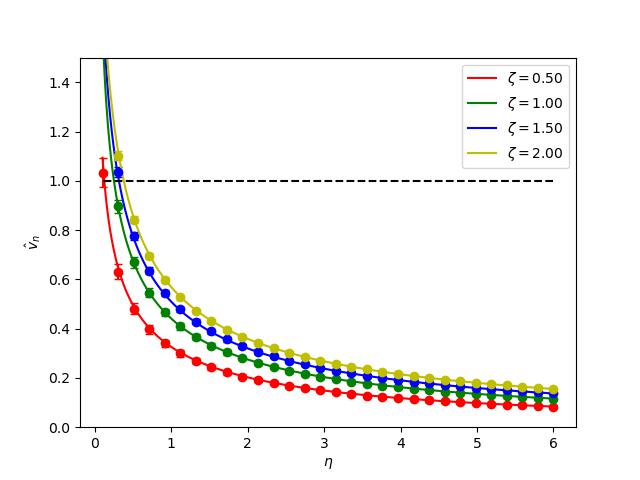}
  \caption{}
  \label{fig:1b}
\end{subfigure}
\caption{Simulated data (markers and error bars) against the theory (solid lines). Figures (\ref{fig:1a},\ref{fig:1b}) show the value of $\hat{w}_n$ and $\hat{v}_n$ defined in (\ref{w_n}, \ref{v_n}) along a regularization path for $\eta \in (0.1, 6)$ with $\alpha = 0.01$.}
\label{fig:1}
\end{figure}

\begin{figure}[t]
\begin{subfigure}{.47\textwidth}
  \includegraphics[width=\linewidth]{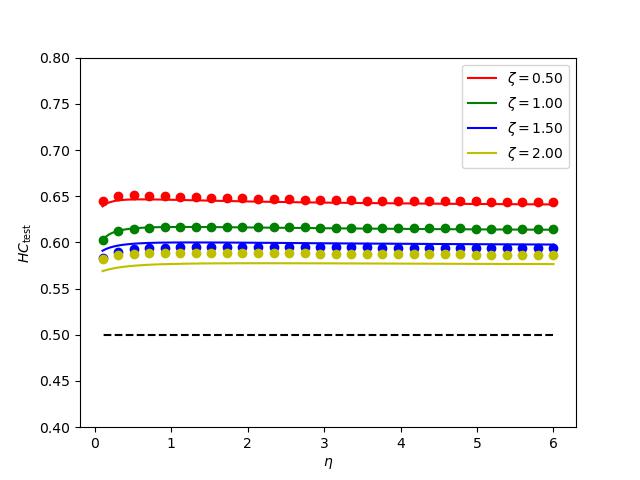}
  \caption{}
  \label{fig:2a}
\end{subfigure}
\hfill
\begin{subfigure}{.47\textwidth}
  \includegraphics[width=\linewidth]{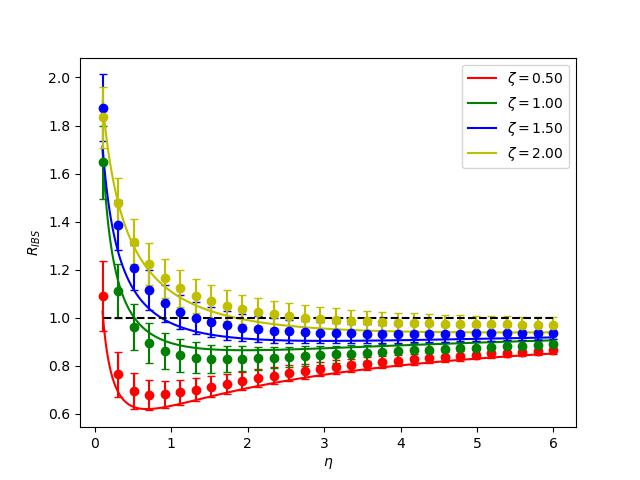}
  \caption{}
  \label{fig:2b}
\end{subfigure}
\caption{Simulated data (markers and error bars) against the theory (solid lines): (left) the Test c-index; (right)  $R_{IBS}$ as defined in (\ref{def : R_ibs}), along a regularization path for $\eta \in (0.1, 6)$ with $\alpha = 0.01$.
The error bars for the figure (\ref{fig:2a}) have been removed to aid visualization.}
\label{fig:2}
\end{figure}

\section{CONCLUSION}
\label{sec:conclusion}
In conclusion, we have applied the Gaussian Convex Min Max theorem to a flexible parametric model for survival data, i.e. the piece-wise exponential model. 
With the theoretical guarantees obtained, we investigated the effect of the ridge penalization onto the predictive ability of the model. 
This represents a first step into the full formalization of a set of recent heuristic results obtained for the Cox model via the replica method of statistical physics \cite{Coolen_17,Massa_2024}, in the sense that our future goal will be to generalize the present proof to the Cox semi parametric model.
While the present work focuses on the piece-wise exponential model, it is more generally a prototype for any parametric Survival analysis model (although the precise applications of various theorems to specific models need to be checked). As such, it puts on a firm ground previous heuristics on the subject \cite{Massa_22}. Further work is required to rigorously establish more recent heuristic results for the semi-parametric Cox model \cite{Massa_2024}. 

The proof technique used here relies completely on the assumption of Gaussian covariates, since it hinges on the Convex Gaussian Min Max theorem \cite{Thrampoulidis_2015}. What is intriguing is that the replica methods requires only that the linear predictor follows (asymptotically) a Normal law. So we expect these results to hold in more general scenarios (e.g. Sub-Gaussian covariates) than the admittedly limited one studied here (Gaussian covariates). It would be ideal to be able to put these several conjectures on a firm theoretical basis, and indeed recent investigations seek to do this, at least in some specific settings \cite{Han_23,Hu_2020,Montanari_2022}.

\section*{Acknowledgements}
The author is grateful to M. A. Jonker for useful discussions,  and to the anonymous referee that contributed to improve the exposition of this manuscript.  
\section*{Supporting Information}
Additional simulations, together with the routines used to compute the estimators, solve the Replica Symmetric equations and plot the figures of the paper are available in GitHub at \url{https://github.com/EmanueleMassa/SMSA_2024}.

\bibliographystyle{unsrt}  
\bibliography{references} 

\appendix

\section{PROPERTIES OF THE MOREAU ENVELOPE FOR THE PIECEWISE EXPONENTIAL MODEL}
In the following  we assume that $\|\bomega\|\leq C_{\\bomega}$, $w,v\leq C_{\bbeta}$ and $\tau>0$, furthermore $\tau_1<\tau_2<\dots<\tau_{\ell +1}<\infty$.

\begin{proposition}[INTEGRABILITY]
\label{integrability}
     The random function  $(\bomega,w,v,\tau) \rightarrow \mathcal{M}_{g(.,\bomega,\Delta,T)}(wZ_0 + vQ,\tau) -\Delta \log \lambda(T|\bomega)$ is absolutely integrable.
\end{proposition}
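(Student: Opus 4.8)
The plan is to dominate $\big|h(\bomega,w,v,\tau)\big|$, where
$h(\bomega,w,v,\tau):=\mathcal{M}_{g(.,\bomega,\Delta,T)}(wZ_0+vQ,\tau)-\Delta\log\lambda(T|\bomega)$,
by a single integrable function of $(T,Z_0,Q)$, uniformly over $\|\bomega\|\le C_{\bomega}$, $0\le w,v\le C_{\bbeta}$ and fixed $\tau>0$. I would first record two consequences of the knots being finite: since $\Psi_k(t)\le\tau_{k+1}-\tau_k$ and $\rme^{\omega_k}\le\rme^{C_{\bomega}}$, the cumulative hazard is bounded, $0\le\Lambda(T|\bomega)\le\Lambda_{\max}$ with $\Lambda_{\max}=\Lambda_{\max}(C_{\bomega},\{\tau_k\})$; and, because the $\psi_k$ are indicators of disjoint intervals, whenever $T$ lies in the knot range one has $\lambda(T|\bomega)=\rme^{\omega_k}$ for the single index $k$ with $\tau_k<T<\tau_{k+1}$, so $|\Delta\log\lambda(T|\bomega)|\le\|\bomega\|_\infty\le C_{\bomega}$ almost surely (recall that $\lambda(T|\bomega)>0$ on $\{\Delta=1\}$ is already implicit in $\mathfrak{L}_n$ being finite, which forces uncensored times into $(\tau_1,\tau_{\ell+1})$).

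Next I would sandwich the Moreau envelope. For the upper bound, evaluate its defining objective at the feasible point $\xi=0$:
\[
\mathcal{M}_{g(.,\bomega,\Delta,T)}(wZ_0+vQ,\tau)\le\frac{(wZ_0+vQ)^2}{2\tau}+g(0,\bomega,\Delta,T)=\frac{(wZ_0+vQ)^2}{2\tau}+\Lambda(T|\bomega)\le\frac{(wZ_0+vQ)^2}{2\tau}+\Lambda_{\max}.
\]
For the lower bound, discard the quadratic term and minimise $g$ on its own:
\[
\mathcal{M}_{g(.,\bomega,\Delta,T)}(wZ_0+vQ,\tau)\ge\inf_{\xi}\big\{\Lambda(T|\bomega)\rme^{\xi}-\Delta\xi\big\}=\begin{cases}0,&\Delta=0,\\1+\log\Lambda(T|\bomega),&\Delta=1,\end{cases}
\]
the second case being the value of the convex map $\xi\mapsto\Lambda(T|\bomega)\rme^{\xi}-\xi$ at its minimiser $\xi^{\star}=-\log\Lambda(T|\bomega)$, well defined since $\Lambda(T|\bomega)>0$ on $\{\Delta=1\}$. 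Combining with the bound on $|\Delta\log\lambda|$ gives the envelope
\[
\big|h(\bomega,w,v,\tau)\big|\le\frac{(wZ_0+vQ)^2}{2\tau}+\Lambda_{\max}+1+C_{\bomega}+\big|\log\Lambda(T|\bomega)\big|\,\bm{1}[\Delta=1],
\]
which depends on $(w,v)$ only through the Gaussian quadratic, itself bounded by $2C_{\bbeta}^2(Z_0^2+Q^2)$ on the box, so a bound uniform over the compact parameter region comes for free.

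Finally I would check that this envelope is integrable under the law of $(T,Z_0,Q)$. The quadratic term has expectation $(w^2+v^2)/(2\tau)\le C_{\bbeta}^2/\tau<\infty$, and the constants are harmless; the only delicate term is $\mathbb{E}\big[|\log\Lambda(T|\bomega)|\,\bm{1}[\Delta=1]\big]$. There $\Lambda(T|\bomega)\le\Lambda_{\max}$ and, on $\{\Delta=1\}$ where $T>\tau_1$, lower-bounding the cumulative hazard by its leading spline contribution gives $\Lambda(T|\bomega)\ge\rme^{-C_{\bomega}}\min\{T-\tau_1,\tau_2-\tau_1\}$, hence $|\log\Lambda(T|\bomega)|\le|\log\Lambda_{\max}|+C_{\bomega}+|\log\min\{T-\tau_1,\tau_2-\tau_1\}|$; the first two are constants, and the last has finite expectation because $T-\tau_1$ is bounded away from $0$ outside a neighbourhood of $\tau_1$ while near $\tau_1$ one only needs $\int_{\tau_1}^{\tau_1+\delta}|\log(t-\tau_1)|\,f_T(t)\,\rmd t<\infty$, which holds whenever the law of $T$ has a density that does not blow up faster than polynomially at $\tau_1$ (in particular a bounded density, as in our setting). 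This last step — taming the logarithmic singularity of $\log\Lambda(T|\bomega)$, equivalently of $\inf_\xi g$ on $\{\Delta=1\}$, as $T\downarrow\tau_1$ — is the part I expect to require the most care, since it is the only place where one must invoke regularity of the data-generating law beyond the boundedness the knots already provide; everything else reduces to the elementary two-sided bound above.
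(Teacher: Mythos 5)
Your proof follows the same route as the paper's: sandwich the Moreau envelope between $\min_{\xi}g(\xi,\bomega,\Delta,T)$ and the value of its objective at $\xi=0$, then bound each resulting term using the boundedness of $\bomega$ and the finiteness of the knots. The one genuine difference is your treatment of the term $\big|\log\Lambda(T|\bomega)\big|$ on $\{\Delta=1\}$, and there your version is actually the more careful one: the paper's bound (\ref{bound_det}) passes from $\Lambda(T|\bomega)\leq\sum_k\rme^{\omega_k}(\tau_{k+1}-\tau_k)$ directly to $\big|\log\Lambda(T|\bomega)\big|\leq 1+\big|\log\sum_k\rme^{\omega_k}(\tau_{k+1}-\tau_k)\big|$, but an upper bound on $\Lambda$ controls $\log\Lambda$ only from above, not in absolute value; $\Lambda(T|\bomega)\downarrow 0$ as $T\downarrow\tau_1$, so a lower bound is needed too. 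You supply exactly that via $\Lambda(T|\bomega)\geq\rme^{-C_{\bomega}}\min\{T-\tau_1,\tau_2-\tau_1\}$ and then tame the logarithmic singularity under a mild regularity condition on the law of $T$ near $\tau_1$ (automatic when $\tau_1=0$ and $T>0$ a.s. with a locally bounded density, as in the paper's setting). So the proposal is correct, matches the paper's decomposition, and patches the one step the paper glosses over at the cost of an explicit (but harmless) assumption on the distribution of $T$ near the first knot.
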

\begin{proof}
By definition,
\begin{equation}
   \mathcal{M}_{g(.,\bomega,\Delta,T)}(wZ_0 + vQ,\tau)\leq \frac{1}{2\tau} \big(wZ_0 + vQ \big)^2 + g(0,\bomega,\Delta,T) \ .
\end{equation}
hence 
\begin{equation}
   \underset{x}{\min} \ g(x,\bomega,\Delta,T) \leq \mathcal{M}_{g(.,\bomega,\Delta,T)}(wZ_0 + vQ,\tau)\leq  \frac{\lambda}{2\tau} \big(wZ_0 + vQ \big)^2 +g(0,\bomega,\Delta,T) \ .
\end{equation}
This implies that 
\begin{eqnarray}
\label{bound_abs_moreau}
   \Big|\mathcal{M}_{g(.,\bomega,\Delta,T)}(wZ_0 + vQ,\tau)\Big|&\leq& \max\Big\{\frac{\lambda}{2\tau} \big(wZ_0 + vQ \big)^2 + \big|g(0,\bomega,\Delta,T)\big|,\big|\underset{x}{\min} \ g(x,\bomega,\Delta,T)\big| \Big\}\leq \nonumber \\
   &\leq& \frac{\lambda}{2\tau} \big(wZ_0 + vQ \big)^2 + \big|g(0,\bomega,\Delta,T)\big|+\big|\underset{x}{\min} \ g(x,\bomega,\Delta,T)\big|\ .
\end{eqnarray}
Now we show that the expectation of the right-hand side exists finite.
For the first term, we have $ \mathbb{E}\Big[\big(wZ_0 + vQ \big)^2\Big]\leq w^2 +v^2$, which is bounded by hypothesis.
For the second term, we notice that 
\begin{equation}
\label{bound_H}
    \Lambda(T|\bomega) := \sum_{k=1}^\ell\exp(\omega_k) \Psi_k(T)\leq  \sum_{k= 1}^{\ell} \exp(\omega_k)(\tau_{k+1}-\tau_{k})
\end{equation}
hence, via Cauchy Swartz inequality
\begin{eqnarray}
    \mathbb{E}\Big[ \big|g(0,\bomega,\Delta,T)\big|\Big] &=& \mathbb{E}\Big[\Big|\Delta \Lambda(T|\bomega)\Big|\Big] \leq  \sum_{k= 1}^{\ell} \exp(\omega_k)(\tau_{k+1}-\tau_{k})\leq \nonumber \\
    &\leq &\sqrt{\sum_{k= 1}^{\ell} \exp(2\omega_k) }\sqrt{\sum_{k= 1}^{\ell}(\tau_{k+1}-\tau_{k})^2}
\end{eqnarray}
which is bounded by a finite constant by assumption that $\bomega$ is in a compact set and  $\tau_{\ell+1}$ is finite.
Then,
\begin{equation}
\label{bound_logh}
    \big|\Delta \log \lambda(T|\bomega)\big| \leq \sum_{k=1}^{\ell} \psi_k(T) \big|\omega_{k}\big|
\end{equation}
which has a finite expectation value, since $ \psi_k(T)$ is an indicator function over the interval $[\tau_{k}, \tau_{k+1}]$.
Finally, we have 
\begin{equation}
     \underset{x}{\min} \ g(x,\bomega,\Delta,T) = \Delta \big( 1- \log(\Delta) + \log \Lambda(T|\bomega) \big)
\end{equation}
and we see that
\begin{equation}
\label{bound_det}
    \Big|\Delta \big( 1- \log \Delta + \log \Lambda(T|\bomega) \big)\Big|\leq 1+\Big|\log \Lambda(T|\bomega)\Big|\leq 1 + \Big|\log \sum_{k=1}^l \exp(\omega_k)(\tau_{k+1}-\tau_{k})\Big|<\infty \ .
\end{equation}
Since $\|\bomega\|$ is bounded by a large constant by assumption.
\end{proof}
\begin{proposition}[FINITE VARIANCE]
\label{fin_var}
    The random function $(\bomega,w,v,\tau) \rightarrow \mathcal{M}_{g(.,\bomega,\Delta,T)}(wZ_0 + vQ,\tau)-\Delta \log \lambda(T|\bomega)$ has a finite variance.
\end{proposition}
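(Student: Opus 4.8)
The plan is to derive finite variance from finiteness of the \emph{second} moment, since Proposition~\ref{integrability} already provides a finite mean; so it suffices to show that $\mathbb{E}\big[\big(\mathcal{M}_{g(.,\bomega,\Delta,T)}(wZ_0+vQ,\tau)-\Delta\log\lambda(T|\bomega)\big)^2\big]<\infty$ for each fixed $\bomega$ with $\|\bomega\|\le C_{\bomega}$, each $0\le w,v\le C_{\bbeta}$ and each $\tau>0$. The natural starting point is the deterministic pointwise bound already obtained in the proof of Proposition~\ref{integrability} (inequality~(\ref{bound_abs_moreau})): $|\mathcal{M}_{g(.,\bomega,\Delta,T)}(wZ_0+vQ,\tau)|$ is dominated by $\tfrac{1}{2\tau}(wZ_0+vQ)^2+|g(0,\bomega,\Delta,T)|+|\min_x g(x,\bomega,\Delta,T)|$. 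Adding $|\Delta\log\lambda(T|\bomega)|$ and using $(a+b+c+d)^2\le 4(a^2+b^2+c^2+d^2)$, the problem reduces to checking the finiteness of the four expectations $\mathbb{E}[(wZ_0+vQ)^4]$, $\mathbb{E}[g(0,\bomega,\Delta,T)^2]$, $\mathbb{E}[(\min_x g(x,\bomega,\Delta,T))^2]$ and $\mathbb{E}[(\Delta\log\lambda(T|\bomega))^2]$.

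Three of these are handled exactly as in Proposition~\ref{integrability}, merely with squares. The Gaussian fourth moment is $\mathbb{E}[(wZ_0+vQ)^4]=3(w^2+v^2)^2\le 12C_{\bbeta}^4$, a fixed constant. Next, $g(0,\bomega,\Delta,T)=\Delta\Lambda(T|\bomega)$, so by the deterministic bound~(\ref{bound_H}) on the cumulative hazard, $\mathbb{E}[g(0,\bomega,\Delta,T)^2]\le\big(\sum_{k=1}^{\ell}e^{\omega_k}(\tau_{k+1}-\tau_k)\big)^2$, finite since $\bomega$ lies in a compact set and $\tau_{\ell+1}<\infty$. Finally, because the $\psi_k$ are indicators of disjoint intervals, $(\Delta\log\lambda(T|\bomega))^2\le\big(\sum_{k=1}^{\ell}\psi_k(T)|\omega_k|\big)^2=\sum_{k=1}^{\ell}\psi_k(T)\,\omega_k^2$, and taking expectations gives $\sum_k\omega_k^2\,\mathbb{P}[\tau_k<T<\tau_{k+1}]\le\|\bomega\|^2<\infty$.

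The remaining term, $\mathbb{E}[(\min_x g(x,\bomega,\Delta,T))^2]=\mathbb{E}[\Delta(1+\log\Lambda(T|\bomega))^2]$, is where the only genuine work lies, because $\log\Lambda(T|\bomega)$ is unbounded below wherever $\Lambda(T|\bomega)$ can be arbitrarily small; an upper bound on $\log\Lambda(T|\bomega)$ is immediate from~(\ref{bound_H}) and compactness of the $\bomega$-set. For the lower bound I would split on the location of $T$: once $T>\tau_2$ the active interval is preceded by a completed one, so $\Lambda(T|\bomega)\ge e^{-C_{\bomega}}\min_k(\tau_{k+1}-\tau_k)>0$ and $\log\Lambda(T|\bomega)$ is bounded on this event, whereas on $\{\tau_1<T<\tau_2\}$ one has $\Lambda(T|\bomega)\ge e^{-C_{\bomega}}(T-\tau_1)$, so $(\log\Lambda(T|\bomega))^2$ is controlled by a constant plus $(\log(T-\tau_1))^2$. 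The whole term is therefore finite provided $\mathbb{E}\big[(\log(T-\tau_1))^2\,\bm{1}[\tau_1<T<\tau_2]\big]<\infty$; this is the main obstacle, and it is really a condition on the data-generating law near the first knot. It holds, for example, whenever the first knot sits at the origin and $Y|\mathbf{X}$ has a density that stays bounded as $t\downarrow\tau_1$ (as in the log-logistic model of Section~\ref{sec:num_exp}, where the density vanishes at the origin), since then $\int_0^{\tau_2-\tau_1}(\log s)^2\,\rmd s<\infty$ dominates. I would add this mild regularity of $f_0$ to the standing assumptions, after which the proof closes by collecting the four bounds above.
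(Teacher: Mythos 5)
Your proposal is correct and follows the same overall route as the paper: reduce finite variance to finiteness of the second moment, start from the pointwise envelope bound (\ref{bound_abs_moreau}) established for Proposition \ref{integrability}, expand the square, and control each term using (\ref{bound_H}), the Gaussian fourth moment, and the disjointness of the $\psi_k$. Where you genuinely diverge is in the treatment of $\mathbb{E}\big[(\min_x g(x,\bomega,\Delta,T))^2\big]$, and there your version is the more careful one. The paper disposes of this term by invoking (\ref{bound_det}), which asserts that $|\log\Lambda(T|\bomega)|$ is dominated by $\big|\log\sum_k \rme^{\omega_k}(\tau_{k+1}-\tau_k)\big|$; but the inequality $\Lambda(T|\bomega)\le\sum_k \rme^{\omega_k}(\tau_{k+1}-\tau_k)$ only controls $\log\Lambda(T|\bomega)$ from above, whereas the difficulty is that $\Lambda(T|\bomega)\to 0$ as $T\downarrow\tau_1$, so $\log\Lambda(T|\bomega)$ is unbounded below and no deterministic constant can dominate it. You correctly isolate this as the only nontrivial term, split on $\{T>\tau_2\}$ versus $\{\tau_1<T<\tau_2\}$, and observe that finiteness reduces to $\mathbb{E}\big[(\log(T-\tau_1))^2\,\bm{1}[\tau_1<T<\tau_2]\big]<\infty$, a mild regularity condition on the law of $T$ near the first knot that holds whenever the event-time density stays bounded there (and in particular in the paper's simulation setup). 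What your approach buys is a proof that actually closes, at the price of one explicit extra standing assumption; what the paper's shorter argument buys is brevity, but as written its key deterministic bound does not hold pointwise. Your other minor refinement, $\mathbb{E}[(wZ_0+vQ)^4]=3(w^2+v^2)^2$, also corrects the paper's expression, which omits the cross term; neither affects finiteness.
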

\begin{proof}
We compute the second moment, as we have already shown that the expectation exists finite.
Using (\ref{bound_abs_moreau}) we have
\begin{eqnarray}
   &&\Big(\mathcal{M}_{g(.,\bomega,\Delta,T)}(wZ_0 + vQ,\tau) -\Delta \log \lambda(T|\bomega)\Big)^2\leq \\
   &&\hspace{0.2cm}\Big(\frac{\lambda}{2\tau} \big(wZ_0 + vQ \big)^2 + \big|g(0,\bomega,\Delta,T)\big|+\big|\underset{x}{\min} \ g(x,\bomega,\Delta,T)\big|\Big)^2 
\end{eqnarray}
Computing the square, we see that: \\
1) $\mathbb{E}\Big[\big(wZ_0 + vQ \big)^4\Big] = w^4 \mathbb{E}\Big[Z_0^4\Big] +v^4  \mathbb{E}\Big[Q^4\Big]$ is finite, \\
2) $\mathbb{E}\Big[\big|g(0,\bomega,\Delta,T)\big|^2\Big]\leq \big( \sum_{k= 1}^{\ell} \exp(\omega_k)(\tau_{k+1}-\tau_{k})\big)^2 \leq  \big(\sum_{k= 1}^{\ell} \exp(2\omega_k)\big) \big(\sum_{k= 1}^{\ell}(\tau_{k+1}-\tau_{k})^2\big)$ which is finite, \\
3) the  term $\underset{x}{\min} \ g(x,\bomega,\Delta,T)$ is bounded by a deterministic constant (\ref{bound_det}), and hence so it is its square.
The cross terms can be similarly bounded via the Cauchy-Schwartz inequality.
\end{proof}
\begin{proposition}[FINITE VARIANCE OF THE DERIVATIVE]
\label{finite_var_der}
    The random function $(\bomega,w,v,\tau) \rightarrow \dot{g}(.,\bomega,\Delta,T)(wZ_0 + vQ,\tau)$ has a finite variance.
\end{proposition}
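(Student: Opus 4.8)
The plan is to exploit the closed form of the proximal operator already recorded in (\ref{prox}) to linearise the dependence of the proximal point on the Gaussian inputs, after which finiteness of the second moment (hence of the variance, since we work on a probability space) reduces to elementary log-normal moment estimates of the same flavour as in Propositions \ref{integrability} and \ref{fin_var}.

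First I would make $\dot g$ at the proximal point explicit. Writing $x:=wZ_0+vQ$ and $\hat\xi:=\prox_{g(.,\bomega,\Delta,T)}(x,\tau)$, strict convexity and smoothness of $g(\cdot,\bomega,\Delta,T)$ give the stationarity identity $\tfrac1\tau(\hat\xi-x)+\dot g(\hat\xi,\bomega,\Delta,T)=0$, so $\dot g(\hat\xi,\bomega,\Delta,T)=\tfrac1\tau(x-\hat\xi)$; substituting $\hat\xi = x+\Delta\tau - W_0\big(\tau\Lambda(T|\bomega)\rme^{x+\Delta\tau}\big)$ from (\ref{prox}),
\[
\dot g(\hat\xi,\bomega,\Delta,T)\;=\;\tfrac1\tau W_0\!\big(\tau\Lambda(T|\bomega)\rme^{x+\Delta\tau}\big)-\Delta\;=\;\Lambda(T|\bomega)\rme^{\hat\xi}-\Delta .
\]

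Next I would derive an almost-sure envelope and integrate it. Since the principal branch obeys $W_0(z)\ge 0$ for $z\ge 0$, one has $\hat\xi\le x+\Delta\tau$, whence $-1\le -\Delta\le \dot g(\hat\xi,\bomega,\Delta,T)\le \Lambda(T|\bomega)\rme^{wZ_0+vQ+\tau}$ (using $\Delta\in\{0,1\}$), i.e. $|\dot g(\hat\xi,\bomega,\Delta,T)|\le \Lambda(T|\bomega)\rme^{wZ_0+vQ+\tau}+1$. Squaring and taking expectations, I would bound $\Lambda(T|\bomega)\le\sum_{k=1}^{\ell}\rme^{\omega_k}(\tau_{k+1}-\tau_k)=:\bar\Lambda<\infty$ via (\ref{bound_H}) under $\|\bomega\|\le C_{\bomega}$ and finitely many finite knots, note that $\rme^{\tau}$ is a constant at fixed $\tau$, and use independence of $Z_0,Q$ together with the Gaussian moment generating function to get $\mathbb{E}[\rme^{2(wZ_0+vQ)}]=\rme^{2(w^2+v^2)}\le \rme^{4C_{\bbeta}^2}$; the cross term is handled by Cauchy--Schwarz. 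This yields $\mathbb{E}[\dot g(\hat\xi,\bomega,\Delta,T)^2]\le 2\bar\Lambda^2\rme^{2\tau}\rme^{2(w^2+v^2)}+2<\infty$.

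The only step that is not routine bookkeeping is the almost-sure envelope: the entire estimate hinges on observing that the implicit, nonlinear dependence of $\hat\xi$ on $(Z_0,Q)$ is dominated by the affine quantity $wZ_0+vQ+\Delta\tau$, equivalently $W_0\ge 0$, equivalently $\prox_{g}(x,\tau)\le x+\Delta\tau$. Once this is in place everything reduces to moments of the log-normal variable $\rme^{wZ_0+vQ}$, which are finite of every order --- a remark that will also let the same estimate cover the parameter-derivatives $\partial_w,\partial_v,\partial_\tau,\partial_{\omega_k}$ of the averaged Moreau envelope needed later for the uniform-convergence and argmin-convergence arguments.
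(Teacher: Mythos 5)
Your proof is correct and rests on the same core estimate as the paper's: bound $\Lambda(T|\bomega)$ by the deterministic constant $\sum_k \rme^{\omega_k}(\tau_{k+1}-\tau_k)$ via (\ref{bound_H}) and compactness of the $\bomega$-ball, then reduce everything to the finite exponential moments $\mathbb{E}[\rme^{2(wZ_0+vQ)}]=\rme^{2(w^2+v^2)}$. The one substantive difference is the point at which $\dot g$ is evaluated: the paper reads the (admittedly ambiguous) notation as $\dot g$ at $wZ_0+vQ$ itself, so that $\dot g = \Lambda(T|\bomega)\rme^{wZ_0+vQ}-\Delta$ immediately and the bound is one line, whereas you evaluate at the proximal point $\hat\xi$ and therefore need the additional (correct) observation that $W_0\geq 0$ on $[0,\infty)$ forces $\hat\xi\leq wZ_0+vQ+\Delta\tau$, costing you an extra harmless factor $\rme^{2\tau}$. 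Your version is the slightly stronger statement (it is the quantity that actually appears in the derivatives of the Moreau envelope, cf.\ Proposition \ref{derivatives}), and it dominates the paper's version, so nothing is lost; but be aware that for small $\tau$ your envelope and the paper's coincide, and for the purposes of the weak law of large numbers either reading suffices.
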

\begin{proof}
Using (\ref{bound_H})
\begin{eqnarray}
    \Big|\dot{g}(.,\bomega,\Delta,T)(wZ_0 + vQ,\tau)\Big| &=& \Big|\Lambda(T|\bomega)\rme^{wZ_0 + vQ} -\Delta\Big| = \nonumber \\
    &\leq& 1 +  \rme^{wZ_0 + vQ}\sqrt{\sum_{k= 1}^{\ell} \exp(2\omega_k) }\sqrt{\sum_{k= 1}^{\ell}(\tau_{k+1}-\tau_{k})^2}
\end{eqnarray}
which has a finite expectation. We then show that the second moment exists finite. Just notice that
\begin{eqnarray}
    &&\Big|\dot{g}(.,\bomega,\Delta,T)(wZ_0 + vQ,\tau)\Big|^2 \leq 1 +  2\rme^{wZ_0 + vQ}\Big( \sum_{k=1}^l\exp(2\omega_{k})\Big)^{1/2}\Big(\sum_{k=1}^{\ell}(\tau_{k+1} - \tau_k)^2\Big)^{1/2} + \nonumber\\
    && \hspace{.5cm} + \rme^{2(wZ_0 + vQ)}\sum_{k=1}^{\ell}\exp(2\omega_{k})\sum_{k=1}^{\ell}(\tau_{k+1} - \tau_k)^2
\end{eqnarray}
has a finite expectation
\end{proof}
\begin{proposition}[LIMITS OF THE MOREAU ENVELOPE]
\label{limits}
    \begin{eqnarray}
    \lim_{\tau\rightarrow \infty} \mathcal{M}_{g(.,\bomega,\Delta,T)}(x,\tau) &=& \Delta \big( 1- \log(\Delta) + \log \Lambda(T|\bomega) \big) \label{alpha_inf}\\
    \lim_{\tau\rightarrow 0 } \mathcal{M}_{g(.,\bomega,\Delta,T)}(x,\tau)  &=&   \Lambda(T|\bomega)\rme^{x} - \Delta x \label{alpha_0}\ .
    \end{eqnarray}
\end{proposition}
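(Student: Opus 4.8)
The plan is to treat both limits as instances of one elementary two-sided (squeeze) estimate for the Moreau envelope of the convex function $f:=g(\cdot,\bomega,\Delta,T)$, using only that $f$ is continuous and bounded below. Two inequalities drive everything. First, for every competitor $y\in\mathbb{R}$, by evaluating the objective defining $\mathcal{M}$ at $y$ and at its minimiser, $\inf_{z}g(z,\bomega,\Delta,T)\le \mathcal{M}_{g(.,\bomega,\Delta,T)}(x,\tau)\le \frac{1}{2\tau}(y-x)^2+g(y,\bomega,\Delta,T)$. Second, from the proof of Proposition~\ref{integrability} we already know that $g(\cdot,\bomega,\Delta,T)$ is smooth (hence continuous) in its first argument and that, whenever the right-hand side of (\ref{alpha_inf}) is well defined (i.e. $\Lambda(T|\bomega)>0$ on $\{\Delta=1\}$), one has $\inf_{z}g(z,\bomega,\Delta,T)=\min_{z}g(z,\bomega,\Delta,T)=\Delta(1-\log\Delta+\log\Lambda(T|\bomega))$, with the convention $0\log 0=0$.

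For $\tau\to\infty$: the lower bound above gives $\mathcal{M}_{g(.,\bomega,\Delta,T)}(x,\tau)\ge \inf_{z}g(z,\bomega,\Delta,T)$ for all $\tau>0$, while the upper bound gives, for each fixed $y$, $\limsup_{\tau\to\infty}\mathcal{M}_{g(.,\bomega,\Delta,T)}(x,\tau)\le g(y,\bomega,\Delta,T)$; taking the infimum over $y$ of the right-hand side and squeezing yields $\lim_{\tau\to\infty}\mathcal{M}_{g(.,\bomega,\Delta,T)}(x,\tau)=\inf_{z}g(z,\bomega,\Delta,T)=\Delta(1-\log\Delta+\log\Lambda(T|\bomega))$, which is (\ref{alpha_inf}).

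For $\tau\to 0$: the choice $y=x$ in the upper bound gives $\mathcal{M}_{g(.,\bomega,\Delta,T)}(x,\tau)\le g(x,\bomega,\Delta,T)$. For the reverse inequality, let $\hat{\xi}_\tau:=\prox_{g(.,\bomega,\Delta,T)}(x,\tau)$; by optimality together with the lower bound, $\frac{1}{2\tau}(\hat{\xi}_\tau-x)^2\le \mathcal{M}_{g(.,\bomega,\Delta,T)}(x,\tau)-\inf_{z}g(z,\bomega,\Delta,T)\le g(x,\bomega,\Delta,T)-\inf_{z}g(z,\bomega,\Delta,T)=:M<\infty$, so $(\hat{\xi}_\tau-x)^2\le 2\tau M\to 0$ and hence $\hat{\xi}_\tau\to x$ as $\tau\downarrow 0$. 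Therefore $\mathcal{M}_{g(.,\bomega,\Delta,T)}(x,\tau)=\frac{1}{2\tau}(\hat{\xi}_\tau-x)^2+g(\hat{\xi}_\tau,\bomega,\Delta,T)\ge g(\hat{\xi}_\tau,\bomega,\Delta,T)\to g(x,\bomega,\Delta,T)$ by continuity, and squeezing gives $\lim_{\tau\to 0}\mathcal{M}_{g(.,\bomega,\Delta,T)}(x,\tau)=g(x,\bomega,\Delta,T)=\Lambda(T|\bomega)\rme^{x}-\Delta x$, which is (\ref{alpha_0}).

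The argument is routine; the only point that deserves care is that $g(\cdot,\bomega,\Delta,T)$ be bounded below — needed both for $M<\infty$ and for $\inf_{z}g$ to be finite — which holds exactly where (\ref{alpha_inf}) is meaningful ($\Delta=0$, or $\Delta=1$ with $\Lambda(T|\bomega)>0$; the latter is automatic once the first knot $\tau_1=0$), so no generality is lost. As an alternative that bypasses the squeeze, one can substitute the closed form (\ref{prox}): with $W:=W_0(\tau\Lambda(T|\bomega)\rme^{\Delta\tau+x})$ and $\hat{\xi}_\tau=x+\Delta\tau-W$, the defining identity $W\rme^{W}=\tau\Lambda(T|\bomega)\rme^{\Delta\tau+x}$ gives $\Lambda(T|\bomega)\rme^{\hat{\xi}_\tau}=W/\tau$, and after simplification (using $\Delta^2=\Delta$) one obtains the explicit formula $\mathcal{M}_{g(.,\bomega,\Delta,T)}(x,\tau)=\frac{W^2+2W}{2\tau}-\Delta x-\frac{\Delta\tau}{2}$; then (\ref{alpha_0}) follows from $W_0(u)\sim u$ as $u\to 0^{+}$ and (\ref{alpha_inf}) from the growth of $W_0$ at infinity, the delicate sub-case being $\Delta=1,\ \tau\to\infty$, where one needs the refinement $W=\tau+x+\log\Lambda(T|\bomega)+o(1)$ to see the leading $\tau/2$ terms cancel. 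That asymptotic matching is the main technical nuisance of this second route, which is why the convex-analytic squeeze is the cleaner path.
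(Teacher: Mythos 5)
Your proof is correct, and it takes a genuinely different (and in one place more careful) route than the paper. For (\ref{alpha_inf}) the paper computes $\partial_\alpha\mathcal{M}_{g(.,\bomega,\Delta,T)}(x,\alpha)\le 0$, so the $\alpha\to\infty$ limit equals $\inf_{\alpha>0}\mathcal{M}$, and then swaps the two infima to land on $\min_\xi g$; you instead squeeze between the trivial lower bound $\inf_z g$ and the upper bound obtained by evaluating the envelope's objective at an arbitrary competitor $y$ and letting $\tau\to\infty$ before infimizing over $y$. The two arguments are equally short and both reduce the claim to identifying $\min_\xi g(\xi,\bomega,\Delta,T)=\Delta(1-\log\Delta+\log\Lambda(T|\bomega))$. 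For (\ref{alpha_0}) the paper rescales $\xi=x+\sqrt{\alpha}\phi$ and simply "sends $\alpha\to 0^+$", which tacitly assumes $\sqrt{\alpha}\hat{\phi}\to 0$; your argument supplies exactly the missing justification by bounding $(\hat{\xi}_\tau-x)^2\le 2\tau\bigl(g(x,\bomega,\Delta,T)-\inf_z g\bigr)$ and invoking continuity of $g$, so your route is the more complete one on this half. You are also right to flag that boundedness below of $g$ fails when $\Delta=1$ and $\Lambda(T|\bomega)=0$ (i.e.\ $T\le\tau_1$), a degenerate case the paper's proof passes over silently; your remark that this is harmless when the first knot is at the origin is the correct way to dispose of it. The Lambert-$W$ alternative you sketch is consistent with Proposition \ref{derivatives} and (\ref{prox}) but is, as you say, unnecessary.
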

\begin{proof}
For (\ref{alpha_0}), observe that
\begin{equation}
    \mathcal{M}_{g(.,\bomega,\Delta,T)}\big(x,\alpha\big) := \underset{\xi}{\min}\ \Big\{\frac{1}{2\alpha}(\xi-x)^2 + g(\xi,T)\Big\} =\underset{\phi}{\min}\ \Big\{\frac{1}{2}\phi ^2 + g(x + \sqrt{\alpha} \phi ,\bomega,\Delta,T)\Big\}
\end{equation}
with $\hat{\phi}  = \underset{\phi}{\arg \min}\ \Big\{\frac{1}{2}\phi ^2 +g(x + \sqrt{\alpha} \phi ,\bomega,\Delta,T)\Big\} = - \sqrt{\alpha}\dot{g}(x + \sqrt{\alpha} \hat{\phi} ,\bomega,\Delta,T)$.
Sending $\alpha\rightarrow 0^+$ we get (\ref{alpha_0}).
For (\ref{alpha_inf}), notice that 
\begin{equation}
\frac{\partial}{\partial \alpha }\mathcal{M}_{g(.,\bomega,\Delta,T)}\big(x,\alpha\big) = - \frac{1}{2\alpha^2}\big(\prox_{g(.,\bomega,\Delta,T)}(x,\alpha) - x\big)^2 \leq 0 \ .
\end{equation}
Then 
\begin{eqnarray}
\lim_{\alpha \rightarrow \infty}\mathcal{M}_{g(.,\bomega,\Delta,T)}\big(x,\alpha\big) &=&\underset{\alpha>0}{\inf}\mathcal{M}_{g(.,\bomega,\Delta,T)}\big(x,\alpha\big)=\underset{\xi}{\min}\ \underset{\alpha>0}{\inf}\ \Big\{\frac{1}{2\alpha}(\xi-x)^2 + g(\xi,\bomega,\Delta,T)\Big\}= \nonumber \\
&=& \underset{\xi}{\min} \ g(\xi,\bomega,\Delta,T)\ .\nonumber
\end{eqnarray}
\end{proof}
\begin{proposition}[LIMITS OF THE EXPECTED MOREAU ENVELOPE]
    \begin{eqnarray}
        &&\lim_{\alpha \rightarrow \infty}\mathbb{E}_{T,Z_0,Q}\Big[\mathcal{M}_{g(.,\bomega,\Delta,T)}\big(wZ_0+vQ,\alpha\big)\Big] = \mathbb{E}_{T}\Big[ \Delta \big( 1- \log \Delta + \log \Lambda(T|\bomega) \big) \Big]\label{exp_alpha_0}\\
        &&\lim_{\alpha \rightarrow 0^+}\mathbb{E}_{T,Z_0,Q}\Big[\mathcal{M}_{g(.,\bomega,\Delta,T)}\big(wZ_0+vQ,\alpha\big)\Big] = \mathbb{E}_{T,Z_0,Q}\Big[g\big(wZ_0+vQ,T\big)\Big] \ .\label{exp_alpha_inf}
    \end{eqnarray}
\end{proposition}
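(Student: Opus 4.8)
The plan is to pass the limit over $\alpha$ through the expectation, the interchange being justified by the monotonicity of the Moreau envelope in its scale parameter together with the a priori bounds of Proposition \ref{integrability}. Recall from the proof of Proposition \ref{limits} that for every fixed realization of $(T,Z_0,Q)$,
\[
\frac{\partial}{\partial\alpha}\mathcal{M}_{g(.,\bomega,\Delta,T)}\big(wZ_0+vQ,\alpha\big)=-\frac{1}{2\alpha^2}\big(\prox_{g(.,\bomega,\Delta,T)}(wZ_0+vQ,\alpha)-wZ_0-vQ\big)^2\le 0,
\]
so $\alpha\mapsto\mathcal{M}_{g(.,\bomega,\Delta,T)}(wZ_0+vQ,\alpha)$ is non-increasing, decreasing to $\underset{\xi}{\min}\,g(\xi,\bomega,\Delta,T)=\Delta(1-\log\Delta+\log\Lambda(T|\bomega))$ as $\alpha\to\infty$ (by (\ref{alpha_inf})) and increasing to $g(wZ_0+vQ,\bomega,\Delta,T)$ as $\alpha\to 0^+$ (by (\ref{alpha_0})).

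For (\ref{exp_alpha_0}), fix any $\alpha_0>0$. For all $\alpha\ge\alpha_0$ one has the sandwich
\[
\underset{\xi}{\min}\,g(\xi,\bomega,\Delta,T)\ \le\ \mathcal{M}_{g(.,\bomega,\Delta,T)}\big(wZ_0+vQ,\alpha\big)\ \le\ \mathcal{M}_{g(.,\bomega,\Delta,T)}\big(wZ_0+vQ,\alpha_0\big),
\]
whose lower bound is the deterministic finite quantity estimated in (\ref{bound_det}) and whose upper bound is integrable by the bound (\ref{bound_abs_moreau}) from the proof of Proposition \ref{integrability}. Dominated convergence then applies, and the pointwise limit gives $\lim_{\alpha\to\infty}\mathbb{E}[\mathcal{M}]=\mathbb{E}_T[\Delta(1-\log\Delta+\log\Lambda(T|\bomega))]$, with the convention $0\log 0=0$ (recall $\Delta\in\{0,1\}$).

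For (\ref{exp_alpha_inf}), choosing $y=wZ_0+vQ$ in the infimum defining the Moreau envelope gives $\mathcal{M}_{g(.,\bomega,\Delta,T)}(wZ_0+vQ,\alpha)\le g(wZ_0+vQ,\bomega,\Delta,T)$ for all $\alpha>0$, while $\underset{\xi}{\min}\,g(\xi,\bomega,\Delta,T)$ remains a lower bound; the latter is integrable by (\ref{bound_det}), and the former is integrable since $\mathbb{E}[\Lambda(T|\bomega)\rme^{wZ_0+vQ}]\le\big(\sum_{k=1}^{\ell}\rme^{\omega_k}(\tau_{k+1}-\tau_k)\big)\rme^{(w^2+v^2)/2}<\infty$ on the compact parameter set (using (\ref{bound_H}) and $Z_0\perp Q$) and $\mathbb{E}|\Delta(wZ_0+vQ)|<\infty$. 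Monotone (equivalently dominated) convergence then yields $\lim_{\alpha\to0^+}\mathbb{E}[\mathcal{M}]=\mathbb{E}_{T,Z_0,Q}[g(wZ_0+vQ,T)]$.

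There is no deep obstacle: the entire content is the exchange of limit and integral, which the monotonicity in $\alpha$ and Proposition \ref{integrability} (plus one elementary Gaussian moment-generating-function computation) handle. The only points requiring care are the degenerate convention $0\log 0=0$ in the first identity and checking integrability of the dominating function $g(wZ_0+vQ,\bomega,\Delta,T)$ in the second, which is not stated verbatim in Proposition \ref{integrability} but follows from boundedness of $\Lambda(\cdot|\bomega)$ on the compact $\bomega$-ball and $\mathbb{E}[\rme^{wZ_0+vQ}]=\rme^{(w^2+v^2)/2}$.
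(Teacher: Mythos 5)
Your proof is correct and follows essentially the same route as the paper's: both rest on the pointwise limits of Proposition \ref{limits} and the integrability estimates of Proposition \ref{integrability}, and then interchange limit and expectation by dominated convergence. Your monotone sandwich (using $\partial_\alpha\mathcal{M}\le 0$ to trap $\mathcal{M}(\cdot,\alpha)$ between $\min_{\xi}g$ and either $\mathcal{M}(\cdot,\alpha_0)$ or $g(wZ_0+vQ,\cdot)$) in fact makes explicit the $\alpha$-uniform dominating function that the paper leaves implicit — a welcome refinement, especially for the $\alpha\to 0^+$ case where the bound (\ref{bound_abs_moreau}) alone degenerates — but it is the same argument in substance.
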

\begin{proof}
For the limit of the expectation, we use the Dominated Convergence theorem. 
The Moreau envelope is integrable (proposition \ref{integrability}) and its limits exist (proposition \ref{limits}). Hence, it suffices to show that the limits are themselves integrable. First, for (\ref{exp_alpha_0}) we see that 
\begin{equation}
    \mathbb{E}\Big[\Big|\Delta \big( 1- \log \Delta + \log \Lambda(T|\bomega) \big)\Big|\Big]\leq 1+\mathbb{E}\Big[\Big|\log \Lambda(T|\bomega)\Big|\Big]\leq 1 + \Big|\log \sum_{k=1}^{\ell} \exp(\omega_{\nu})(\tau_{k+1} - \tau_k)\Big|<\infty \ .
\end{equation}
For (\ref{exp_alpha_inf}), notice that
\begin{equation}
    \mathbb{E}\Big[\Big| \Lambda(T|\bomega)\rme^{wZ_0+vQ} - \Delta (wZ_0+vQ)\Big|\Big] \leq \mathbb{E}\Big[\Big|\Lambda(T|\bomega)\rme^{wZ_0+vQ}\Big|\Big] + \mathbb{E}\Big[\Big|\Delta (wZ_0+vQ)\Big|\Big]
\end{equation}
and
\begin{eqnarray}
    &&\mathbb{E}\Big[\Big| \Lambda(T|\bomega)\rme^{wZ_0+vQ}\Big|\Big]\leq \sum_{k=1}^{\ell} \exp(\omega_{\nu}) (\tau_{k+1} - \tau_k) \rme^{2(w^2 + v^2)}<\infty\\
    &&\mathbb{E}\Big[\Big|\Delta(wZ_0+vQ)\Big|\Big] \leq \mathbb{E}\Big[\Big|wZ_0+vQ\Big|\Big]\leq \sqrt{2/\pi}\sqrt{w^2+v^2} <\infty
\end{eqnarray}
which implies the desiderata.
\end{proof}
\begin{proposition}[DERIVATIVES OF THE MOREAU ENVELOPE 1]
\label{derivatives}
    \begin{eqnarray}
        &&\frac{\partial}{\partial x} \mathcal{M}_{g(.,\bomega,\Delta,T)}(x,\tau) = \frac{1}{\tau} \big(x-\prox_{g(.,\bomega,\Delta,T)}(x,\tau)\big) = \Delta - \frac{1}{\tau}W_0\Big(\tau \Lambda(T|\bomega)\rme^{\Delta\tau +x}\Big)\\
        &&\frac{\partial}{\partial \tau} \mathcal{M}_{g(.,\bomega,\Delta,T)}(x,\tau) = -\frac{1}{2\tau^2}\big(x-\prox_{g(.,\bomega,\Delta,T)}(x,\tau)\big)^2 = \nonumber\\
        &&\hspace{1cm}-\frac{1}{2}\Big(\Delta - \frac{1}{\tau}W_0\Big(\tau \Lambda(T|\bomega)\rme^{\Delta\tau +x}\Big)\Big)^2  \ .
    \end{eqnarray}
\end{proposition}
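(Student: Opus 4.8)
The plan is to view $\mathcal{M}_{g(.,\bomega,\Delta,T)}(x,\tau)$ as the optimal value of the one-dimensional, smooth, strongly convex program that defines it, to differentiate this value function through its (unique) minimizer by the envelope theorem, and then to make the resulting expressions explicit by solving the first-order optimality condition in closed form.

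First I would establish well-posedness and smoothness. For fixed $\bomega,T,\Delta$ and $\tau>0$ the inner objective $\xi\mapsto \tfrac{1}{2\tau}(\xi-x)^2 + \Lambda(T|\bomega)\rme^{\xi}-\Delta\xi$ is $C^\infty$, coercive, and $1/\tau$-strongly convex (its second derivative is $\tfrac1\tau+\Lambda(T|\bomega)\rme^{\xi}\geq \tfrac1\tau$, using $\Lambda(T|\bomega)\geq 0$); hence it has a unique minimizer $\prox_{g(.,\bomega,\Delta,T)}(x,\tau)$, characterized by the stationarity equation $\tfrac1\tau\big(\prox_{g(.,\bomega,\Delta,T)}(x,\tau)-x\big) + \Lambda(T|\bomega)\rme^{\prox_{g(.,\bomega,\Delta,T)}(x,\tau)} - \Delta = 0$. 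Because the left side of this equation has strictly positive derivative in its unknown, the implicit function theorem makes $(x,\tau)\mapsto \prox_{g(.,\bomega,\Delta,T)}(x,\tau)$ continuously differentiable, and therefore $\mathcal{M}_{g(.,\bomega,\Delta,T)}$ is $C^1$ on $\{\tau>0\}$ --- this is the classical smoothness of the Moreau envelope \cite{Rockafellar_1997}.

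Next I would differentiate. Writing $\xi^\star$ for the minimizer and using that the inner gradient in $\xi$ vanishes at $\xi^\star$, so that the contributions of $\partial_x\xi^\star$ and $\partial_\tau\xi^\star$ cancel (the envelope/Danskin argument), I obtain
\begin{equation}
\frac{\partial}{\partial x}\mathcal{M}_{g(.,\bomega,\Delta,T)}(x,\tau) = \frac{1}{\tau}\big(x-\xi^\star\big), \qquad \frac{\partial}{\partial\tau}\mathcal{M}_{g(.,\bomega,\Delta,T)}(x,\tau) = -\frac{1}{2\tau^2}\big(x-\xi^\star\big)^2 .
\end{equation}
To turn $\xi^\star$ into a closed form I would manipulate the stationarity equation $\xi^\star - x + \tau\Lambda(T|\bomega)\rme^{\xi^\star} - \Delta\tau = 0$: setting $u:=\tau\Lambda(T|\bomega)\rme^{\xi^\star}\geq 0$ gives $\xi^\star = \log u - \log\big(\tau\Lambda(T|\bomega)\big)$ and reduces the equation to $u + \log u = \log\!\big(\tau\Lambda(T|\bomega)\rme^{\Delta\tau+x}\big)$, i.e. $u\,\rme^{u} = \tau\Lambda(T|\bomega)\rme^{\Delta\tau+x}$; since the right-hand side is nonnegative, $u = W_0\big(\tau\Lambda(T|\bomega)\rme^{\Delta\tau+x}\big)$ for the principal real branch $W_0$ (with $W_0(0)=0$ covering the degenerate case $\Lambda(T|\bomega)=0$), hence $\xi^\star = x + \Delta\tau - W_0\big(\tau\Lambda(T|\bomega)\rme^{\Delta\tau+x}\big)$, which is precisely (\ref{prox}) after the rescaling $\tau\mapsto\tau/\phi$ and the substitution $x = wZ_0+vQ$. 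Inserting this closed form into the two envelope identities above and simplifying with the defining relation of $W_0$ yields the stated expressions for $\partial_x\mathcal{M}$ and $\partial_\tau\mathcal{M}$.

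The only step requiring genuine care is the differentiation of the value function through its unique minimizer --- that is, the $C^1$-regularity of the Moreau envelope in both $x$ and $\tau$ together with the validity of the envelope theorem --- which I would justify via strong convexity and the implicit function theorem as above (alternatively one may invoke the standard identity $\nabla_x\mathcal{M}_f(x,\nu)=\tfrac1\nu(x-\prox_f(x,\nu))$ and compute the $\nu$-derivative directly from the optimality condition, as is already used in the proof of Proposition \ref{limits}). Everything else is the routine Lambert-$W$ inversion of a scalar equation, so I do not anticipate any serious obstacle.
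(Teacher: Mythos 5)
Your proposal is correct and is essentially the paper's own argument — the paper's proof is the one-liner ``follows from differentiation and definition of proximal mapping,'' and you have simply supplied the envelope-theorem justification and the Lambert-$W$ inversion of the stationarity equation explicitly, recovering (\ref{prox}). One small point worth noting: your own computation gives $\tfrac{1}{\tau}\big(x-\prox_{g(.,\bomega,\Delta,T)}(x,\tau)\big)=\tfrac{1}{\tau}W_0\big(\tau\Lambda(T|\bomega)\rme^{\Delta\tau+x}\big)-\Delta$, i.e.\ the negative of the right-most expression displayed in the proposition (harmless for the $\tau$-derivative, which is squared), so the ``stated expression'' for $\partial_x\mathcal{M}$ follows from your derivation only up to this sign.
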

\begin{proof}
The proposition follows from differentiation and definition of proximal mapping. 
\end{proof}
\begin{proposition}[DERIVATIVES OF THE EXPECTED MOREAU ENVELOPE 1]
    \begin{eqnarray}
    \frac{\partial}{\partial w} \mathbb{E}\Big[\mathcal{M}_{g(.,\bomega,\Delta,T)}(wZ_0+vQ,\tau) \Big]&=& \frac{1}{\tau} \mathbb{E}\Big[ Z_0 \Big(\Delta \tau - W_0\Big(\tau \Lambda(T|\bomega)\rme^{\Delta\tau +x}\Big)\Big)\Big]\\
    \frac{\partial}{\partial v} \mathbb{E}\Big[\mathcal{M}_{g(.,\bomega,\Delta,T)}(wZ_0+vQ,\tau) \Big] &=& \frac{1}{\tau} \mathbb{E}\Big[Q\Big(\Delta \tau - W_0\Big(\tau \Lambda(T|\bomega)\rme^{\Delta\tau +x}\Big)\Big)\Big]\\
    \frac{\partial}{\partial \tau}\mathbb{E}\Big[\mathcal{M}_{g(.,\bomega,\Delta,T)}(wZ_0+vQ,\tau) \Big] &=& -\frac{1}{2\tau^2}\mathbb{E}\Big[\Big(\Delta \tau - W_0\Big(\tau \Lambda(T|\bomega)\rme^{\Delta\tau +x}\Big)\Big)^2\Big] \ .
\end{eqnarray}
\end{proposition}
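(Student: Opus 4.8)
The plan is to differentiate under the expectation sign, reducing the statement to the pointwise derivative formulas of Proposition \ref{derivatives} together with an integrable domination argument. By the chain rule and Proposition \ref{derivatives}, for fixed $(\bomega,T,\Delta,Z_0,Q)$ and $\tau>0$,
\[
\frac{\partial}{\partial w}\mathcal{M}_{g(.,\bomega,\Delta,T)}(wZ_0+vQ,\tau) = Z_0\Big(\Delta - \tfrac{1}{\tau}W_0\big(\tau\Lambda(T|\bomega)\rme^{\Delta\tau + wZ_0+vQ}\big)\Big),
\]
with $Z_0$ replaced by $Q$ for $\partial_v$, while $\partial_\tau$ is given directly by the second identity of Proposition \ref{derivatives}. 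It then remains only to justify $\partial_w \mathbb{E}[\cdots] = \mathbb{E}[\partial_w(\cdots)]$ and the analogues for $v$ and $\tau$.

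To interchange differentiation and expectation I would invoke the standard criterion: it suffices to exhibit, for each fixed $\bomega$ and each $\tau$ ranging over a compact subinterval of $(0,\infty)$ and $(w,v)$ in a neighbourhood $N$ of the point of interest, an integrable random variable dominating $\sup_{N}|\partial_w\mathcal{M}(wZ_0+vQ,\tau)|$ and the analogous suprema. Since $0\le W_0(z)\le z$ for $z\ge 0$, one has $\tfrac1\tau W_0\big(\tau\Lambda(T|\bomega)\rme^{\Delta\tau+wZ_0+vQ}\big)\le \Lambda(T|\bomega)\rme^{\Delta\tau+wZ_0+vQ}$, whence
\[
\Big|\partial_w\mathcal{M}(wZ_0+vQ,\tau)\Big| \le |Z_0|\Big(1 + \Lambda(T|\bomega)\rme^{\Delta\tau+wZ_0+vQ}\Big).
\]
Taking the supremum over $N$ and over $\tau$ in its compact range bounds the exponential by $\rme^{c(|Z_0|+|Q|)}$ for a finite constant $c$; combining this with $\Lambda(T|\bomega)\le\sum_k\rme^{\omega_k}(\tau_{k+1}-\tau_k)<\infty$ as in (\ref{bound_H}) and the independence of $(Z_0,Q)$ from $T$, the right-hand side is integrable because Gaussian exponential moments are finite and $|Z_0|\rme^{c|Z_0|}$ is integrable. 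The same estimate, squared, controls $\partial_\tau$, whose pointwise modulus is $\tfrac12\big(\Delta-\tfrac1\tau W_0(\cdots)\big)^2\le\tfrac12\big(1+\Lambda(T|\bomega)\rme^{\Delta\tau+wZ_0+vQ}\big)^2$, which is integrable by the computation already made in Proposition \ref{finite_var_der}; $\partial_v$ is treated exactly as $\partial_w$ with $Z_0\leftrightarrow Q$. With the interchange justified, the claimed identities follow by substituting $x=wZ_0+vQ$ into the formulas of Proposition \ref{derivatives} and pulling the expectation inside, the prefactors $\tfrac1\tau$ and $-\tfrac1{2\tau^2}$ being inherited verbatim.

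I expect the only genuine obstacle to be the domination step, specifically verifying that the bound is \emph{locally uniform} in $(w,v,\tau)$: one must keep $\tau$ bounded away from $0$ so that the $1/\tau$ and $1/\tau^2$ factors do not blow up (consistent with the standing assumption $\tau>0$), and one must exploit the sublinear growth $W_0(z)\le z$ to tame the Lambert $W$ term before appealing to finiteness of Gaussian exponential moments. Once this is in place the remainder is routine, essentially a repackaging of Propositions \ref{integrability}, \ref{fin_var} and \ref{finite_var_der}.
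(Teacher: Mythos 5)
Your proposal is correct and follows essentially the same route as the paper: apply the pointwise derivative formulas of Proposition \ref{derivatives}, then justify the interchange of derivative and expectation by dominating the Lambert $W$ term via $0\le W_0(z)\le z$, the bound (\ref{bound_H}) on $\Lambda(T|\bomega)$, and finiteness of Gaussian exponential moments (the paper separates the $Z_0$ or $Q$ factor by Cauchy--Schwarz rather than bounding the product directly, a cosmetic difference). Your explicit insistence on a \emph{locally uniform} dominating function over a neighbourhood of $(w,v)$ and a compact $\tau$-range is in fact slightly more careful than the paper, which only verifies integrability of the derivative pointwise.
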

\begin{proof}
    We show that the conditions of the dominated converge theorem of Lebesgue are satisfied simultaneously for all derivatives.
    Via proposition \ref{derivatives} we see that
    \begin{eqnarray}
        \mathbb{E}\Big[\Big|\frac{\partial}{\partial w} \mathcal{M}_{g(.,T)}(wZ_0+vQ,\tau)\Big| \Big]&\leq&  \frac{1}{\tau} \mathbb{E}\Big[Z_0^2\Big]^{1/2}\mathbb{E}\Big[(\Delta \tau - W_0\Big(\tau \Lambda(T|\bomega)\rme^{\Delta\tau +wZ_0+vQ}\Big)^2\Big]^{1/2} \\
        \mathbb{E}\Big[\Big|\frac{\partial}{\partial v} \mathcal{M}_{g(.,T)}(wZ_0+vQ,\tau)\Big| \Big]&\leq&  \frac{1}{\tau} \mathbb{E}\Big[Q^2\Big]^{1/2}\mathbb{E}\Big[(\Delta \tau - W_0\Big(\tau \Lambda(T|\bomega)\rme^{\Delta\tau +wZ_0+vQ}\Big)^2\Big]^{1/2} \\
        \mathbb{E}\Big[\Big|\frac{\partial}{\partial \tau} \mathcal{M}_{g(.,T)}(wZ_0+vQ,\tau)\Big| \Big]&=&  \frac{1}{2\tau^2}  \mathbb{E}\Big[(\Delta \tau - W_0\Big(\tau \Lambda(T|\bomega)\rme^{\Delta\tau +wZ_0+vQ}\Big)^2\Big] \ .
    \end{eqnarray}
    Thus it is sufficient to show that the last term is bounded to show integrability of all the first derivatives.\\
    Since $\tau>0$ and for $x\geq0$ it holds $W_0(x)\geq0$, we have that 
    \begin{equation}
        \bigg(\Delta\tau - W_0\Big(\tau \Lambda(T|\bomega)\rme^{\Delta\tau +wZ_0+vQ}\Big)\bigg)^2\leq \tau^2 + W_0^2\Big(\tau \Lambda(T|\bomega)\rme^{\Delta\tau +wZ_0+vQ}\Big) \ .
    \end{equation}
    We now use the bound $W_0(x)\leq \log (x+1)\leq x  $ which is valid for $x\geq 0$, obtaining
    \begin{eqnarray}
         \frac{1}{\tau^2}\mathbb{E}\Bigg[\bigg(\Delta\tau - W_0\Big(\tau \Lambda(T)\rme^{\Delta\tau +wZ_0+vQ}\Big)\bigg)^2\Bigg]&\leq& 1 + 1 \mathbb{E}\Big[ \Lambda^2(T|\bomega)\rme^{2(\Delta\tau +wZ_0+vQ)}\Big]\ .
    \end{eqnarray}
    which can be shown to be finite using (\ref{bound_H}), since $\|\bomega\|,w,v$ are bounded by a constant, $\tau>0$ and $\tau_\ell$ is finite by hypothesis.
\end{proof}
\begin{proposition}[PARTIAL DERIVATIVES OF THE EXPECTED MOREAU ENVELOPE 2]
    \begin{eqnarray}
    \frac{\partial}{\partial \omega_k} \mathbb{E}\Bigg[\mathcal{M}_{g(.,\bomega,\Delta,T)}(wZ_0+vQ,\tau) \Bigg]&=&  \mathbb{E}\Big[ \Psi_k(T)\rme^{\prox_{g(.,\bomega,\Delta,T)}(wZ_0+vQ,\tau)}\exp(\omega_k)\Big] \\
    \frac{\partial}{\partial \omega_k} \mathbb{E}\Big[\Delta \log \lambda(T|\bomega)\Big]&=& \mathbb{E}\Big[ \Delta \psi_k(T)\Big]
\end{eqnarray}
\end{proposition}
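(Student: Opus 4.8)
The plan is to treat both identities by differentiating \emph{inside} the expectation and then justifying the interchange of differentiation and integration via the dominated convergence theorem, in the same spirit as the preceding propositions.

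For the first identity, the point is that $\omega_k$ enters the Moreau envelope $\mathcal{M}_{g(.,\bomega,\Delta,T)}(x,\tau)=\min_{\xi}\{\tfrac{1}{2\tau}(\xi-x)^2+g(\xi,\bomega,\Delta,T)\}$ only through $g$, and that the inner objective is strongly convex in $\xi$ (the quadratic term is $\tfrac1\tau$-strongly convex and $\xi\mapsto g(\xi,\bomega,\Delta,T)=\Lambda(T|\bomega)\rme^{\xi}-\Delta\xi$ is convex because $\Lambda(T|\bomega)\geq 0$). Hence the minimiser is unique, equals $\prox_{g(.,\bomega,\Delta,T)}(x,\tau)$, and depends smoothly on $\omega_k$ by the implicit function theorem; the first-order optimality condition then cancels the contribution of $\partial_{\omega_k}\prox$ (Danskin / envelope theorem), so that, using $\partial_{\omega_k}\Lambda(T|\bomega)=\Psi_k(T)\rme^{\omega_k}$,
\begin{equation*}
\frac{\partial}{\partial\omega_k}\mathcal{M}_{g(.,\bomega,\Delta,T)}(x,\tau)=\frac{\partial g}{\partial\omega_k}(\xi,\bomega,\Delta,T)\Big|_{\xi=\prox_{g(.,\bomega,\Delta,T)}(x,\tau)}=\Psi_k(T)\rme^{\omega_k}\rme^{\prox_{g(.,\bomega,\Delta,T)}(x,\tau)}.
\end{equation*}
I would make the envelope-theorem step precise by invoking strong convexity as above; this part is routine.

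To pass to the expectation I would bound the difference quotient via the mean value theorem: it equals $\partial_{\omega_k}\mathcal{M}$ evaluated at some $\bomega+\theta h\,\mathbf{e}_k$ with $\theta\in(0,1)$, that is $\Psi_k(T)\rme^{\omega_k+\theta h}\rme^{\prox(wZ_0+vQ,\,\bomega+\theta h\mathbf{e}_k,\,\tau)}$. Using the closed form of the proximal operator (cf.\ \ref{prox} and Proposition \ref{derivatives}) together with $W_0(x)\geq 0$ for $x\geq 0$ gives $\prox_{g(.,\bomega,\Delta,T)}(x,\tau)\leq x+\Delta\tau$, hence $\rme^{\prox}\leq\rme^{wZ_0+vQ+\tau}$; combined with $\Psi_k(T)\leq\tau_{k+1}-\tau_k$ and $|\omega_k+\theta h|\leq C_{\bomega}+1$ for $|h|\leq 1$, the difference quotient is dominated by $(\tau_{k+1}-\tau_k)\rme^{C_{\bomega}+1}\rme^{\tau}\rme^{wZ_0+vQ}$, which is integrable by the Gaussian moment generating function (this is essentially the majorant already used in Proposition \ref{finite_var_der}). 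Dominated convergence then yields the stated formula. The main obstacle is precisely the construction of this uniform integrable majorant in a neighbourhood of $\omega_k$; everything else is bookkeeping.

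For the second identity, I would exploit that the indicators $\psi_k$ have pairwise disjoint supports: for $T\in(\tau_k,\tau_{k+1})$ exactly one term of $\lambda(T|\bomega)=\sum_{j}\psi_j(T)\rme^{\omega_j}$ is nonzero and $\log\lambda(T|\bomega)=\omega_k$, while the set of $T$ equal to a knot has probability zero under the assumed continuous survival and censoring laws and may be discarded. Hence $\partial_{\omega_k}\log\lambda(T|\bomega)=\psi_k(T)$ almost surely, and since $\Delta\psi_k(T)\in\{0,1\}$ is bounded, the interchange with the expectation is immediate, giving $\partial_{\omega_k}\mathbb{E}[\Delta\log\lambda(T|\bomega)]=\mathbb{E}[\Delta\psi_k(T)]$.
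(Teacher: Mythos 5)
Your proposal is correct and follows essentially the same route as the paper: the envelope-theorem computation gives the same derivative (note $\rme^{\prox}=W_0(\tau\Lambda\rme^{\Delta\tau+x})/(\tau\Lambda)$, so your formula and the paper's coincide), and your dominating function $\rme^{\prox}\leq\rme^{wZ_0+vQ+\Delta\tau}$ is exactly the paper's bound $W_0(x)\leq x$. Your mean-value-theorem step, giving a majorant uniform in a neighbourhood of $\omega_k$, is in fact a slightly more careful justification of the differentiation--expectation interchange than the paper's (which only checks integrability of the derivative at the point itself), but the substance is identical.
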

\begin{proof}
    We show that the conditions of the dominated convergence theorem are satisfied.
    First we compute 
    \begin{eqnarray}
        \frac{\partial}{\partial \omega_k} \mathcal{M}_{g(.,\bomega,\Delta,T)}(wZ_0+vQ,\tau) &=& \Psi_k(T)\exp(\omega_k)\frac{W_0\Big(\tau \Lambda(T|\bomega)\rme^{\Delta\tau +wZ_0+vQ}\Big)}{\tau \Lambda(T|\bomega)}\nonumber \\
        \frac{\partial}{\partial \omega_k} \Delta \log \lambda(T|\bomega)&=& \Delta \psi_k(T)
    \end{eqnarray}
    where we used that
    \begin{eqnarray}
        \exp\Big\{\prox_{g(.,T)}(wZ_0+vQ,\tau)\Big\} &=&\exp\Big\{wZ_0 + v Q +\Delta \tau - W_0\Big(\tau \Lambda(T|\bomega)\rme^{\Delta\tau+wZ_0 + v Q}\Big)\Big\} = \nonumber\\
        &=&\frac{W_0\Big(\tau \Lambda(T|\bomega)\rme^{\Delta\tau +x}\Big)}{\tau \Lambda(T|\bomega)} \ .\nonumber
    \end{eqnarray}
    Notice that
    \begin{eqnarray}
        &&\mathbb{E}\Bigg[\Psi_k(T)\exp(\omega_k)\frac{W_0\Big(\tau \Lambda(T|\bomega)\rme^{\Delta\tau +wZ_0+vQ}\Big)}{\tau \Lambda(T|\bomega)}\Bigg]\leq (\tau_{k+1}-\tau_k)\mathbb{E}_{Z_0,Q}\Bigg[\frac{W_0\Big(\tau \Lambda(T|\bomega)\rme^{\Delta\tau +wZ_0+vQ}\Big)}{\tau \Lambda(T|\bomega)} \Bigg] \nonumber \\
        &&\hspace{1cm}\leq (\tau_{k+1}-\tau_k)\mathbb{E}_{Z_0,Q}\Big[\rme^{\Delta\tau +wZ_0+vQ}\Big], 
    \end{eqnarray}
    where  for the last inequality we used that $W_0(x)\leq \log(x+1)\leq x$, for $x\geq0$. This implies that the above exists finite.
    Furthermore
    \begin{equation}
       \mathbb{E}\Big[ \Delta \psi_k(T)\Big]  \leq \mathbb{E}\Big[\psi_k(T)\Big] 
    \end{equation}\
    which is finite since $\psi_k$ is the indicator function of the interval $(\tau_k, \tau_{k+1})$.
    
\end{proof}

\section{POINTWISE CONVERGENCE OF $\mathcal{L}_n$ TO $\mathcal{L}$}
\label{appendix:pointwise}
\begin{proposition}
Let 
\begin{eqnarray}
   \mathcal{L}_n(\bomega,w,v,\phi,\tau) &:=& \frac{1}{n} \sum_{i=1}^n \mathcal{M}_{g(.,\bomega,\Delta_i,T_i)  }\big(wZ_{0,i} +  vQ_i,\tau/\phi\big) - \Delta_i \log \lambda(T_i|\bomega)+\nonumber \\
   &+& \phi\big(\tau/2- v\|\mathbf{G}\|/\sqrt{n}\big)+ \frac{1}{2}\eta (v^2 + w^2) +\frac{1}{2}\alpha \|\bomega\|^2
\end{eqnarray}
and 
\begin{eqnarray}
    \mathcal{L}(\bomega,w,v,\phi,\tau) &:=& \mathbb{E}_{T,Z_0,Q}\Big[\mathcal{M}_{g(.,\bomega,\Delta,T)  }\big(wZ_{0}  +v Q,\tau/\phi\big)\Big] - \mathbb{E}\Big[ \Delta \log \lambda(T|\bomega)\Big] +\nonumber \\
    &+&  \phi\big(\tau/2- v\sqrt{\zeta}\big)+ \frac{1}{2}\eta (v^2 + w^2) +\frac{1}{2}\alpha\|\bomega\|^2\ .
\end{eqnarray}
Then $\mathcal{L}_n(w,v,\bomega, \phi,\tau)  \xrightarrow[n\rightarrow \infty]{P} \mathcal{L}(w,v,\bomega,\phi,\tau)$ pointwise in $w,v,\tau,\bomega, \phi$.
   
\end{proposition}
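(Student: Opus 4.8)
The plan is to split $\mathcal{L}_n-\mathcal{L}$ into three contributions and show each vanishes in probability at the fixed point $(\bomega,w,v,\phi,\tau)$ in the stated range. First, the purely deterministic terms $\phi\tau/2+\frac12\eta(v^2+w^2)+\frac12\alpha\|\bomega\|^2$ occur identically in $\mathcal{L}_n$ and $\mathcal{L}$ and cancel exactly. What remains is (i) the deviation of the empirical average $A_n:=\frac1n\sum_{i=1}^n U_i$, with $U_i:=\mathcal{M}_{g(.,\bomega,\Delta_i,T_i)}(wZ_{0,i}+vQ_i,\tau/\phi)-\Delta_i\log\lambda(T_i|\bomega)$, from its mean, and (ii) the term $-\phi v\big(\|\mathbf{G}\|/\sqrt n-\sqrt\zeta\big)$.

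For (i), note that $T_i,\Delta_i$ are measurable functions of the scalar $Z_{0,i}$ (through $\mathbf{X}_i'\bbeta_0=\|\bbeta_0\|Z_{0,i}$) together with the independent censoring variable $C_i$, and that $Q_i\perp(Z_{0,i},T_i,\Delta_i)$; hence the quadruples $(Z_{0,i},Q_i,T_i,\Delta_i)$, and therefore the $U_i$, are i.i.d.\ across $i$. Proposition~\ref{integrability} gives $\mathbb{E}|U_1|<\infty$ and Proposition~\ref{fin_var} gives $\operatorname{Var}(U_1)<\infty$, so $\operatorname{Var}(A_n)=\operatorname{Var}(U_1)/n\to0$ and Chebyshev's inequality (equivalently the weak law of large numbers) yields $A_n\xrightarrow[n\rightarrow\infty]{P}\mathbb{E}[U_1]=\mathbb{E}_{T,Z_0,Q}\big[\mathcal{M}_{g(.,\bomega,\Delta,T)}(wZ_0+vQ,\tau/\phi)\big]-\mathbb{E}[\Delta\log\lambda(T|\bomega)]$.

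For (ii), $\|\mathbf{G}\|^2=\sum_{j=1}^{p-1}G_j^2$ is a sum of $p-1$ i.i.d.\ $\chi^2_1$ variables, so $\|\mathbf{G}\|^2/(p-1)\xrightarrow[n\rightarrow\infty]{P}1$ by the weak law of large numbers (one may alternatively invoke Gaussian concentration of $\|\mathbf{G}\|$ around $\sqrt{p-1}$). Writing $\|\mathbf{G}\|/\sqrt n=\sqrt{(p-1)/n}\,\cdot\sqrt{\|\mathbf{G}\|^2/(p-1)}$ and using $p/n\to\zeta$ with the continuous mapping theorem gives $\|\mathbf{G}\|/\sqrt n\xrightarrow[n\rightarrow\infty]{P}\sqrt\zeta$, hence $-\phi v\|\mathbf{G}\|/\sqrt n\xrightarrow[n\rightarrow\infty]{P}-\phi v\sqrt\zeta$ at the fixed values of $\phi,v$.

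Combining (i) and (ii) by the standard fact that a finite sum of sequences converging in probability converges in probability to the sum of the limits (Slutsky's lemma) gives $\mathcal{L}_n(\bomega,w,v,\phi,\tau)\xrightarrow[n\rightarrow\infty]{P}\mathcal{L}(\bomega,w,v,\phi,\tau)$. I expect no real obstacle here: the only nontrivial ingredients are the integrability and finite-variance statements of Propositions~\ref{integrability}--\ref{fin_var}, which are needed precisely because the summand contains the potentially heavy-tailed factors $\mathrm{e}^{wZ_0+vQ}$ through $g$ and its Moreau envelope; once those bounds are available, the remainder is a textbook law-of-large-numbers argument.
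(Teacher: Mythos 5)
Your proposal is correct and follows essentially the same route as the paper: the deterministic terms cancel, the empirical average converges by the weak law of large numbers using the integrability and finite-variance propositions, and $\|\mathbf{G}\|/\sqrt{n}\xrightarrow{P}\sqrt{\zeta}$ by a concentration argument (you use the WLLN on $\|\mathbf{G}\|^2/(p-1)$ plus continuous mapping, the paper computes the chi-distribution moments and applies Chebyshev --- an immaterial difference). Your explicit verification that the quadruples $(Z_{0,i},Q_i,T_i,\Delta_i)$ are i.i.d.\ is a small point the paper leaves implicit, and is a welcome addition.
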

\begin{proof}
Since $\|\mathbf{G}\|$ follows a chi distribution with $p-1$ degrees of freedom
\begin{equation}
    \mathbb{E}\Big[\|\mathbf{G}\|\Big] = \sqrt{2} \frac{\Gamma(\frac{p}{2})}{\Gamma(\frac{p-1}{2})} = \sqrt{p-2} + o(1),
    \qquad \mathbb{V}\Big[\|\mathbf{G}\|^2\Big] = p-1 - \mathbb{E}\Big[\|\mathbf{G}\|\Big]^2 = 1 + o(1),  
\end{equation}
hence we conclude $\|\mathbf{G}\|/\sqrt{n} = \sqrt{p/n} + o_P(1)\xrightarrow[n\rightarrow \infty]{P} \sqrt{\zeta}$, where $\zeta := \lim_{n\rightarrow \infty} p(n)/n$ (e.g. via Chebyshev inequality).
Furthermore, by the weak law of large numbers, given proposition \ref{fin_var}, we have that 
\begin{eqnarray}
    \hspace{-1cm}&&\frac{1}{n} \sum_{i=1}^n\mathcal{M}_{g(.,\bomega,\Delta_i,T_i)  }\big(wZ_{0,i} +  vQ_i,\tau/\phi\big)- \Delta_i \log \lambda(T_i|\bomega)\xrightarrow[n\rightarrow \infty]{P}\\
    \hspace{-1cm}&&\hspace{1cm}\mathbb{E}\Big[\mathcal{M}_{g(.,\bomega,\Delta,T)  }\big(wZ_{0} +  vQ,\tau/\phi\big)-\Delta \log \lambda(T|\bomega)\Big] \ .
\end{eqnarray}
\end{proof}

\section{ASYMPTOTIC CONVERGENCE OF THE SADDLE POINT OF THE AO}
\label{appendix:saddle_convergence}
In this section we show that
\begin{equation}
     \underset{\bomega, w,v\geq 0}{\min} \ \underset{0\leq\phi\leq \lambda_{\max} }{\max} \ \underset{\tau> 0}{\min} \  \mathcal{L}_n(\bomega, w,v,\phi,\tau)  \xrightarrow[n,p\rightarrow \infty]{P}  \underset{\bomega, w,v\geq 0}{\min} \ \underset{0\leq\phi\leq \lambda_{\max} }{\max} \ \underset{\tau> 0}{\min}  \ \mathcal{L}(\bomega, w,v,\phi,\tau)
\end{equation}
The idea is to use the so-called \emph{convexity lemma}, this guarantees that point-wise convergence of convex functions over compact sets implies uniform convergence over the latter. 
\begin{lemma}[CONVEXITY LEMMA]
\label{lemma:convexity}
    Let $E\subset \mathbb{R}^d$ be open, convex and $F_n$ a sequence of proper, convex functions and $f$ a deterministic function, both defined on $E$, such that 
    \begin{equation}
        F_n(x) \xrightarrow[]{P}f(x), \forall \ x \in E \ .
    \end{equation}
    Then $F_n$ converges uniformly to $f$ over all compact subsets of $E$, i.e. 
    \begin{equation}
        \underset{x\in A}{\sup} \Big|F_n(x)-f(x)\Big| \xrightarrow[]{P} 0 \ .
    \end{equation}
\end{lemma}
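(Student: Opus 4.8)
The plan is to reduce the statement to a deterministic lemma on pointwise–convergent sequences of convex functions, combined with the standard fact that convergence in probability is equivalent to the property that every subsequence has a further subsequence converging almost surely. Two preliminaries come first. Since each $F_n$ is proper and defined on $E$ it is finite-valued (hence continuous) and convex on $E$; fixing $x,y\in E$ and $\lambda\in[0,1]$ and passing to limits in probability in $F_n(\lambda x+(1-\lambda)y)\le \lambda F_n(x)+(1-\lambda)F_n(y)$ shows that the deterministic limit $f$ is convex on $E$, and a finite convex function on the open set $E$ is continuous (indeed locally Lipschitz). Also, to prove $\sup_{x\in A}|F_n(x)-f(x)|\xrightarrow{P}0$ it suffices to show that every subsequence of $(F_n)$ admits a further subsequence along which this supremum tends to $0$ almost surely (measurability of the supremum being automatic, since by continuity it equals the supremum over a countable dense subset of $A$).

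So fix a subsequence and a countable dense set $D\subset E$. For each $x\in D$, convergence in probability lets us extract a sub-subsequence along which $F_n(x)\to f(x)$ almost surely; a diagonal argument over the countable set $D$ produces one subsequence $(F_{n_k})$ such that, almost surely, $F_{n_k}(x)\to f(x)$ simultaneously for all $x\in D$. On this almost sure event I would invoke the deterministic convexity lemma (Rockafellar, \emph{Convex Analysis}, Theorem 10.8): a sequence of finite convex functions on an open convex set that converges pointwise on a dense subset converges pointwise on the whole set to a finite convex function, uniformly on every compact subset. Thus, almost surely, $F_{n_k}\to g$ locally uniformly for some finite convex $g$; since $g=f$ on the dense set $D$ and both are continuous, $g\equiv f$ on $E$, so $\sup_{x\in A}|F_{n_k}(x)-f(x)|\to 0$ almost surely, which is exactly what the subsequence principle needs.

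The technical heart is the deterministic locally-uniform-convergence statement, which I would cite but could reprove if a self-contained treatment is preferred. Its proof rests on the quantitative fact that a finite convex function on a ball $B(x_0,2r)\subset E$ is Lipschitz on $B(x_0,r)$ with constant controlled by its oscillation over $B(x_0,2r)$: one covers the compact set $A$ (with a little room to spare, using $\mathrm{dist}(A,\mathbb{R}^d\setminus E)>0$) by finitely many such balls together with a finite grid of points, the pointwise convergence controls $F_{n_k}$ on that grid, an upper bound on $F_{n_k}(x)$ follows by writing $x$ as a convex combination of neighbouring grid points, and a matching lower bound follows by placing a grid point between $x$ and a farther interior grid point and using convexity. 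Modulo this lemma the argument is routine; the only points demanding care are the diagonalization over $D$ and the limit-passage in the convexity inequality used to guarantee that $f$ (equivalently $g$) is a genuine continuous convex function.
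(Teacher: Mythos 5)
Your proof is correct. The paper does not prove this lemma itself but simply cites \cite{Gill_82} for it, and the standard proof of that cited result is precisely your argument: reduce convergence in probability to almost sure convergence along subsequences, diagonalize over a countable dense subset, and invoke the deterministic Rockafellar Theorem 10.8 on locally uniform convergence of pointwise-convergent convex functions.
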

The proof can be found in \cite{Gill_82} (page 1116 theorem 2.1).
In particular, we will use the following lemma, which is a consequence of the former.
\begin{lemma}[MIN-CONVERGENCE - OPEN SETS]
\label{lemma:min_conv}
     Consider a sequence of proper, convex stochastic functions $F_n : (0, \infty) \rightarrow \mathbb{R}$, and, a deterministic function $f : (0, \infty) \rightarrow \mathbb{R}$, such that:
     \begin{itemize}
         \item[(a)] $F_n(x) \xrightarrow[]{P}f(x)$, $\forall \ x > 0 $,
         \item[(b)] $\exists z>0\ : \ f(x)>\underset{x>0}{\inf} f(x), \ \forall x \geq z$ $\iff$ $\lim_{x\rightarrow\infty} f(x) = +\infty$.
     \end{itemize}
     Then 
     \begin{equation}
         \underset{x>0}{\inf} F_n(x) \xrightarrow[]{P} \underset{x>0}{\inf}  f(x) \ .
     \end{equation}
\end{lemma}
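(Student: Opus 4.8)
Since $f$ is the pointwise limit of the convex functions $F_n$ it is itself convex, and, being real-valued while tending to $+\infty$ (condition (b)), it is bounded below, so $m:=\inf_{x>0}f(x)\in\mathbb{R}$. The plan is to establish the two one-sided statements
\[
P\big[\inf_{x>0}F_n(x)\le m+\epsilon\big]\xrightarrow[n\to\infty]{}1,\qquad P\big[\inf_{x>0}F_n(x)\ge m-\epsilon\big]\xrightarrow[n\to\infty]{}1,
\]
for every $\epsilon>0$. The first is immediate from (a): picking $x_0>0$ with $f(x_0)<m+\tfrac{\epsilon}{2}$ one has $\inf_{x>0}F_n(x)\le F_n(x_0)\xrightarrow{P}f(x_0)$, so $P[\inf_{x>0}F_n(x)>m+\epsilon]\le P[|F_n(x_0)-f(x_0)|>\tfrac{\epsilon}{2}]\to0$. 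This direction uses neither convexity nor (b).

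For the lower bound I would cut $(0,\infty)$ into a central compact interval $[a_1,z_2]$ and the two tails $(0,a_1]$ and $[z_2,\infty)$, controlling each by a different mechanism. \emph{Central interval.} On $[a_1,z_2]\subset(0,\infty)$ the convexity lemma (Lemma~\ref{lemma:convexity}) upgrades (a) to $\sup_{x\in[a_1,z_2]}|F_n(x)-f(x)|\xrightarrow{P}0$, whence $\inf_{[a_1,z_2]}F_n\ge\inf_{[a_1,z_2]}f-o_P(1)\ge m-o_P(1)$. \emph{Right tail.} Since $f\to+\infty$ there are $a_1\le z_1<z_2<\infty$ with $f(z_1)<f(z_2)$; on the event $\{F_n(z_1)<F_n(z_2)\}$, whose probability tends to $1$ by (a), convexity of $F_n$ forces $F_n$ to be nondecreasing on $[z_2,\infty)$, so $\inf_{x\ge z_2}F_n(x)=F_n(z_2)\xrightarrow{P}f(z_2)\ge m$. \emph{Left tail.} For $0<a_1<a_2$ and $x\le a_1$, convexity gives the secant-extension bound $F_n(x)\ge F_n(a_1)+\frac{x-a_1}{a_2-a_1}\big(F_n(a_2)-F_n(a_1)\big)$; the right-hand side is affine in $x$, so its infimum over $(0,a_1]$ equals $\min\{F_n(a_1),L_n\}$ with $L_n:=\frac{a_2}{a_2-a_1}F_n(a_1)-\frac{a_1}{a_2-a_1}F_n(a_2)\xrightarrow{P}\ell$, where $\ell$ is the value at $0$ of the secant line of $f$ through $(a_1,f(a_1))$ and $(a_2,f(a_2))$. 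It therefore suffices to choose $a_1<a_2$ with $\min\{f(a_1),\ell\}>m-\epsilon$: if $\lim_{x\to0^+}f(x)=+\infty$ pick any $a_1<a_2$ with $f(a_1)>f(a_2)$, giving $\ell>f(a_1)\ge m$; otherwise $\lim_{x\to0^+}f(x)$ is finite and $\ge m$, and taking $a_2=2a_1$ gives $\ell=2f(a_1)-f(2a_1)\to\lim_{x\to0^+}f(x)$ as $a_1\to0^+$, so a small enough $a_1$ works. The three regions cover $(0,\infty)$ as soon as $a_1\le z_2$, so $\inf_{x>0}F_n(x)$ is the minimum of the three infima, each $\ge m-\epsilon$ with probability $\to1$; combined with the upper bound this proves the lemma.

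\textbf{Main obstacle.} The only genuinely delicate point is the left tail: condition (b) constrains $f$ only at $+\infty$, so one must separately prevent the infimum of $F_n$ from escaping towards $0^+$, and the mechanism is the convexity-driven secant-extension inequality together with the shrinking choice of interpolation points above (which always applies because a convex function on $(0,\infty)$ either blows up or has a finite limit at $0^+$). In the application of the lemma to $\tau\mapsto\mathcal{L}_n(\bomega,w,v,\phi,\tau)$ the relevant $f$ has a finite limit as $\tau\to0^+$, so only the second case is actually used there.
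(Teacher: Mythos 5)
Your proof is correct. Note that the paper does not actually prove this lemma; it only cites \cite{Thrampoulidis_2018} (Lemma A.6), so your argument is a self-contained reconstruction rather than a paraphrase of anything in the manuscript. The overall architecture (trivial upper bound from pointwise convergence at a near-minimizer; lower bound by splitting $(0,\infty)$ into a compact core handled by the convexity lemma and two tails excluded by convexity) is the standard one in the CGMT literature, and each of your steps checks out: the monotonicity of $F_n$ on $[z_2,\infty)$ forced by $F_n(z_1)<F_n(z_2)$ is the usual slope-monotonicity argument, and the secant-extension bound $F_n(x)\ge F_n(a_1)+\frac{x-a_1}{a_2-a_1}(F_n(a_2)-F_n(a_1))$ for $x\le a_1$ is valid for convex functions. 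The genuinely valuable part of your write-up is the left tail: hypothesis (b) says nothing about $x\to 0^+$, so the lemma is only true because a convex function on $(0,\infty)$ cannot dive to $-\infty$ at the left endpoint, and your two-case analysis (secant value at $0$ exceeds $f(a_1)$ when $f$ blows up at $0^+$; $2f(a_1)-f(2a_1)\to\lim_{x\to 0^+}f(x)\ge m$ otherwise) makes this quantitative and covers exactly the case that arises in the paper's application (where the infimum over $\tau$, resp.\ $\kappa$, is approached at the open boundary $0^+$). One cosmetic remark: since $m$ is finite you should fix the tolerance at $m-\epsilon/2$ in each of the three regions before taking the union bound, as you do implicitly; with that bookkeeping the three events combine cleanly and the proof is complete.
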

The proof can be found in \cite{Thrampoulidis_2018} (lemma A6 page 29).
The assumption $(b)$ of lemma \ref{lemma:min_conv} above, is known as level-bounded condition. The following lemma, gives an equivalent characterization of a level bounded convex function.
\begin{lemma}[LEVEL BOUNDED CONVEX FUNCTIONS]
\label{lemma:level_bounded}
     Let $f : (0, \infty) \rightarrow \mathbb{R}$be convex, then the following statements are equivalent:
     \begin{itemize}
         \item[(a)] $\exists z>0\ : \ f(x)>\underset{x>0}{\inf} f(x), \ \forall x \geq z$,
         \item[(b)] $\lim_{x\rightarrow\infty} f(x) = +\infty$.
     \end{itemize}
\end{lemma}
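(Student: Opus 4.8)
The plan is to prove the two implications separately, relying on two elementary facts about a real‑valued convex function $f$ on the open interval $(0,\infty)$: its right derivative $f'_+$ exists and is non‑decreasing, so that at each point $a$ one has the supporting line $f(x)\ge f(a)+f'_+(a)(x-a)$ for all $x>0$; and $f$ is continuous on $(0,\infty)$. Throughout I write $f^\star:=\inf_{x>0}f(x)$. Both directions amount to the single observation that slopes of a convex function can only increase, so that once $f$ starts going up it must keep going up (at least linearly).

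First I would handle $(b)\Rightarrow(a)$. The hypothesis $f(x)\to+\infty$ rules out $f$ being non‑increasing on all of $(0,\infty)$ — otherwise $f$ would decrease to $\inf f\le f(1)<+\infty$ — so, $f'_+$ being non‑decreasing, there is a point $a$ with $f'_+(a)>0$. The supporting line at $a$ then bounds $f$ from below on the whole halfline: for $x\ge a$ it gives $f(x)\ge f(a)$, and for $0<x<a$ it gives $f(x)\ge f(a)+f'_+(a)(x-a)>f(a)-af'_+(a)$. Hence $f^\star\ge f(a)-af'_+(a)>-\infty$. Since the finite value $f^\star$ is eventually exceeded by $f(x)\to+\infty$, there is $z>0$ with $f(x)>f^\star$ for all $x\ge z$, which is exactly (a).

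For $(a)\Rightarrow(b)$ I would route the argument through a minimizer: assuming the infimum is attained at some $x^\star\in(0,\infty)$, condition (a) forces $x^\star<z$ (since $f(x^\star)=f^\star\not>f^\star$). For $x\ge z$ the difference quotient $q(x):=\frac{f(x)-f(x^\star)}{x-x^\star}$ is non‑decreasing in $x$, and (a) makes $q(z)=\frac{f(z)-f^\star}{z-x^\star}>0$ strictly; therefore $q(x)\ge q(z)$ for all $x\ge z$, so $f(x)\ge f^\star+q(z)(x-x^\star)\to+\infty$, which is (b). The delicate point — and the main obstacle — is precisely this reliance on a minimizer: a convex function may approach a finite but unattained infimum from strictly above (for instance $x\mapsto 1/x$), satisfying the literal inequality in (a) without satisfying (b); so (a) should be understood with the infimum attained, equivalently with $f$ staying bounded away from $f^\star$ at infinity. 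This is immaterial for the paper, because the implication actually invoked to verify the hypothesis of Lemma~\ref{lemma:min_conv} is $(b)\Rightarrow(a)$, applied to the convex map $\tau\mapsto\mathcal{L}(\bomega,w,v,\phi,\tau)$ on $(0,\infty)$, which tends to $+\infty$ as $\tau\to+\infty$; for that purpose only the easy direction above is needed.
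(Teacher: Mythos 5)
Your proof is essentially self-contained where the paper is not: the manuscript offers no argument for this lemma and simply cites Thrampoulidis et al.\ (Lemma A.7), so any comparison is between your elementary supporting-line/difference-quotient argument and an external reference. Your direction $(b)\Rightarrow(a)$ is complete and correct, and it is the only direction the paper actually uses (to verify the level-bounded hypothesis of the min-convergence lemma for $\tau\mapsto\mathcal{L}(\bomega,w,v,\phi,\tau)$). Your most valuable contribution is the observation that $(a)\Rightarrow(b)$ is \emph{false} as literally stated: $f(x)=1/x$ is convex on $(0,\infty)$, satisfies $f(x)>\inf f=0$ for every $x$, yet tends to $0$, not $+\infty$. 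That is a genuine defect in the lemma as written (and in condition (b) of the min-convergence lemma, which asserts the same equivalence). One small refinement to your fix: requiring the infimum to be attained is sufficient for your argument but is neither necessary nor quite ``equivalent'' to the right condition (e.g.\ $f(x)=x$ has unattained infimum yet satisfies (b)); the robust reading of (a) is the uniform gap $\inf_{x\ge z}f(x)>\inf_{x>0}f(x)$, under which one picks $x_0<z$ with $f(x_0)<\inf_{x\ge z}f$ and runs exactly your monotone-difference-quotient argument from $x_0$ through $z$, with no attainment needed. With that reading both directions hold and your proof goes through verbatim.
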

The proof can be found in \cite{Thrampoulidis_2018} (lemma A7 page 29).
We will proceed sequentially from the innermost operation to the outermost.

\subsection{Minimization over $\tau$}
Fix $\bomega,w,v,\phi\geq 0$, and let us denote $\mathcal{L}_n^{\bomega,w,v,\phi}(\tau):= \mathcal{L}_n(\bomega,w,v,\phi,\tau)$ and  $\mathcal{L}^{\bomega,w,v,\phi}(\tau):= \mathcal{L}(\bomega,w,v,\phi,\tau)$. 
\begin{proposition}
    \begin{equation}
    \underset{\tau\geq 0 }{\inf} \ \mathcal{L}_n^{\bomega,w,v,\phi}(\tau)\xrightarrow[]{P} \underset{\tau>0}{\inf} \ \mathcal{L}^{\bomega,w,v,\phi}(\tau) \ .
\end{equation}
\end{proposition}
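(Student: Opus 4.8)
The plan is to apply the min-convergence lemma (Lemma~\ref{lemma:min_conv}) to the one-dimensional stochastic functions $F_n(\tau):=\mathcal{L}_n^{\bomega,w,v,\phi}(\tau)$ and their deterministic limit $f(\tau):=\mathcal{L}^{\bomega,w,v,\phi}(\tau)$ on the open half-line $(0,\infty)$; its conclusion is exactly the claim. To do so I must verify: (a) $F_n$ and $f$ are proper convex functions of $\tau$; (b) $F_n(\tau)\xrightarrow[]{P}f(\tau)$ for every fixed $\tau>0$; (c) $f$ is level-bounded, equivalently (Lemma~\ref{lemma:level_bounded}) $\lim_{\tau\to\infty}f(\tau)=+\infty$. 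Two easy reductions come first. I take $\phi>0$ fixed: when $\phi=0$ the Moreau-envelope parameter degenerates, the envelope reduces to $\inf_\xi g(\xi,\bomega,\Delta,T)$, and $\mathcal{L}_n,\mathcal{L}$ are constant in $\tau$, so there is nothing to prove. Also, both $F_n$ and $f$ have a finite limit as $\tau\to 0^+$ by Proposition~\ref{limits}, so $\inf_{\tau\ge 0}$ and $\inf_{\tau>0}$ coincide for each of them, which reconciles the two infima appearing in the statement.

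Item (b) is immediate: it is precisely the pointwise convergence in probability proved in Appendix~\ref{appendix:pointwise}, specialized to the fixed tuple $(\bomega,w,v,\phi)$. For (a), the only substantive point is convexity in $\tau$ of the term $\tau\mapsto\mathcal{M}_{g(.,\bomega,\Delta,T)}(x,\tau/\phi)=\min_{\xi}\big\{\tfrac{\phi}{2\tau}(\xi-x)^2+g(\xi,\bomega,\Delta,T)\big\}$. Here $(\xi,\tau)\mapsto\tfrac{\phi}{2\tau}(\xi-x)^2$ is jointly convex on $\mathbb{R}\times(0,\infty)$ (the perspective of a convex quadratic, precomposed with an affine substitution in $\xi$), so the bracket is jointly convex in $(\xi,\tau)$ and partial minimization over $\xi$ leaves a convex function of $\tau$; adding the affine term $\phi\tau/2$ and the $\tau$-independent remainder gives convexity of $\mathcal{L}_n^{\bomega,w,v,\phi}$, and taking expectations gives convexity of $\mathcal{L}^{\bomega,w,v,\phi}$. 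Finiteness on $(0,\infty)$, hence properness, is inherited from the integrability bounds of Proposition~\ref{integrability} together with the (deterministic) finiteness of each Moreau envelope.

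The real work is (c). As $\tau\to\infty$ the envelope scale $\tau/\phi\to\infty$, and Proposition~\ref{limits} gives, pointwise in the randomness, $\mathcal{M}_{g(.,\bomega,\Delta,T)}(wZ_0+vQ,\tau/\phi)\downarrow\Delta\big(1-\log\Delta+\log\Lambda(T|\bomega)\big)$; this limit is integrable and the envelope is nonincreasing in its scale parameter, so by monotone (or dominated) convergence $\mathbb{E}_{T,Z_0,Q}\big[\mathcal{M}_{g(.,\bomega,\Delta,T)}(wZ_0+vQ,\tau/\phi)\big]$ tends to a finite constant. Since the only remaining $\tau$-dependence of $\mathcal{L}^{\bomega,w,v,\phi}(\tau)$ is the linear term $\phi\tau/2\to+\infty$ (recall $\phi>0$), we conclude $\mathcal{L}^{\bomega,w,v,\phi}(\tau)\to+\infty$, which is (c). Lemma~\ref{lemma:min_conv} then yields $\inf_{\tau>0}\mathcal{L}_n^{\bomega,w,v,\phi}(\tau)\xrightarrow[]{P}\inf_{\tau>0}\mathcal{L}^{\bomega,w,v,\phi}(\tau)$, and with the endpoint remark this is the stated convergence. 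I expect (c) to be the main obstacle, not because it is deep but because it is where one could slip: one must ensure the Moreau-envelope contribution does not drift down to $-\infty$ faster than $\phi\tau/2$ grows, and it is exactly the monotonicity-plus-finite-limit structure of the envelope (Proposition~\ref{limits}) that forbids this.
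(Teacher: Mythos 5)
Your proposal is correct and follows essentially the same route as the paper: verify convexity and pointwise convergence in probability, establish level-boundedness of the limit via $\phi\tau/2\to+\infty$ for $\phi>0$, invoke Lemma~\ref{lemma:min_conv}, and treat $\phi=0$ separately through the $\alpha\to\infty$ limit of the Moreau envelope and the law of large numbers. You in fact supply more detail than the paper on two points it leaves implicit — convexity in $\tau$ via the perspective-function argument, and the finiteness of the limiting expected envelope needed to conclude level-boundedness.
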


    \begin{proof}     
    Since $\mathcal{L}_n^{\bomega,w,v,\phi}(.)$ is convex, and $\mathcal{L}_n^{\bomega,w,v,\phi}(.)\xrightarrow[]{P} \mathcal{L}^{\bomega,w,v,\phi}(.)$ on $(0,\infty)$, also $\mathcal{L}^{\bomega,w,v,\phi}(.)$ is convex. 
    If $\phi>0$, then $ \lim_{\tau \rightarrow\infty}\mathcal{L}^{\bomega,w,v,\phi}(\tau) = +\infty$, since $\phi\tau \rightarrow \infty$ ($\phi>0$). Hence, via lemma \ref{lemma:level_bounded}, we see that the conditions of lemma \ref{lemma:min_conv} are satisfied and the conclusion follows.
    If $\phi = 0$, then 
    \begin{eqnarray}
        \underset{\tau\geq 0 }{\inf} \ \mathcal{L}_n^{\bomega,w,v,\phi=0}(\tau) &=& \lim_{\alpha \rightarrow \infty} \frac{1}{n} \sum_{i=1}^n\mathcal{M}_{g (.,\bomega,\Delta,T)}\big(wZ_{0,i}+vQ_i,\alpha\big) - \Delta_i\log \lambda(T_i|\bomega)+\frac{1}{2}\alpha\|\bomega\|^2=\nonumber \\
        &=&\frac{1}{n} \sum_{i=1}^n \underset{\xi}{\min} \ g(\xi,\bomega,\Delta_i,T_i)- \Delta_i\log \lambda(T_i|\bomega) +\frac{1}{2}\alpha\|\bomega\|^2\ .
    \end{eqnarray}
    Under our assumptions also 
    \begin{eqnarray}
        \underset{\tau\geq 0 }{\inf} \ \mathcal{L}^{\bomega,w,v,\phi=0}(\tau) &=& \lim_{\alpha \rightarrow \infty}\mathbb{E}_{T,Z_0,Q}\Big[\mathcal{M}_{g (.,\bomega,\Delta,T)}\big(wZ_0+vQ,\alpha\big)- \Delta\log \lambda(T|\bomega)\Big] +\frac{1}{2}\alpha\|\bomega\|^2=\nonumber\\
        &=&\mathbb{E}_{T}\Big[ \underset{\xi}{\min} \ g(\xi,\bomega,\Delta,T)- \Delta\log \lambda(T|\bomega)\Big] +\frac{1}{2}\alpha\|\bomega\|^2
    \end{eqnarray}
    and via the weak law of large numbers, we obtain the claim.
\end{proof}

\subsection{Maximization over $\phi$}
Fix $\bomega,w,v$, let us denote $\mathcal{L}_n^{\bomega,w,v}(\phi):= \underset{\tau>0}{\inf}\ \mathcal{L}_n(\bomega,w,v,\phi,\tau)$ and  $\mathcal{L}^{\bomega,w,v}(\phi):= \underset{\tau>0}{\inf}\ \mathcal{L}(\bomega,w,v,\phi,\tau)$. 
\begin{proposition}
    \begin{equation}
    \underset{\phi\geq 0}{\sup} \ \mathcal{L}_n^{\bomega,w,v}(\phi)\xrightarrow[]{P} \underset{\phi\geq 0}{\sup} \ \mathcal{L}^{\bomega,w,v}(\phi) \ .
    \end{equation}
\end{proposition}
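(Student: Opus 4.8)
The plan is to promote the pointwise convergence $\mathcal{L}_n^{\bomega,w,v}(\phi)\xrightarrow[]{P}\mathcal{L}^{\bomega,w,v}(\phi)$ established in the previous subsection for each fixed $\phi\geq 0$ (including $\phi=0$) to convergence of the suprema, exploiting concavity in $\phi$. For fixed $\tau>0$ the map $\phi\mapsto\mathcal{L}_n(\bomega,w,v,\phi,\tau)$ is concave, because the term $\phi(\tau/2-v\|\mathbf{G}\|/\sqrt{n})$ is affine and $\mathcal{M}_{g(.,\bomega,\Delta_i,T_i)}(x,\tau/\phi)=\min_{y}\{\tfrac{\phi}{2\tau}(y-x)^2+g(y,\bomega,\Delta_i,T_i)\}$ is an infimum of maps affine in $\phi$; hence $\mathcal{L}_n^{\bomega,w,v}=\inf_{\tau>0}\mathcal{L}_n(\bomega,w,v,\cdot,\tau)$ is concave on $[0,\infty)$, and so is its pointwise limit $\mathcal{L}^{\bomega,w,v}$. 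Thus $-\mathcal{L}_n^{\bomega,w,v}$ and $-\mathcal{L}^{\bomega,w,v}$ are proper convex functions, to which Lemma \ref{lemma:convexity} (and an appropriate closed-endpoint, concave analogue of Lemma \ref{lemma:min_conv}) applies once the optimization is confined to a compact set.

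Producing that compact set is the crux, and is where I expect the real work to lie. Using $\mathcal{M}_f(x,\nu)\leq f(x)$ and letting $\tau\downarrow 0$ inside the infimum --- so that $\phi\tau/2\to 0$ while each Moreau envelope $\mathcal{M}_{g(.,\bomega,\Delta_i,T_i)}(wZ_{0,i}+vQ_i,\tau/\phi)$ tends to $g(wZ_{0,i}+vQ_i,\bomega,\Delta_i,T_i)$ by Proposition \ref{limits} --- one obtains, for all $\phi\geq 0$,
\begin{equation}
\mathcal{L}_n^{\bomega,w,v}(\phi)\ \leq\ A_n-\phi\, v\,\|\mathbf{G}\|/\sqrt{n}\,,
\end{equation}
with $A_n:=\tfrac1n\sum_{i=1}^n\big[g(wZ_{0,i}+vQ_i,\bomega,\Delta_i,T_i)-\Delta_i\log\lambda(T_i|\bomega)\big]+\tfrac12\eta(v^2+w^2)+\tfrac12\alpha\|\bomega\|^2$. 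By the weak law of large numbers and the second-moment estimates of Appendix A, $A_n\xrightarrow[]{P}A$ for a finite constant $A$, while $\|\mathbf{G}\|/\sqrt{n}\xrightarrow[]{P}\sqrt{\zeta}$; the deterministic version of the bound gives $\mathcal{L}^{\bomega,w,v}(\phi)\leq A-\phi v\sqrt{\zeta}$. For $v>0$ this forces $\mathcal{L}^{\bomega,w,v}(\phi)\to-\infty$ as $\phi\to\infty$, and since $\mathcal{L}_n^{\bomega,w,v}(0)\xrightarrow[]{P}\mathcal{L}^{\bomega,w,v}(0)$ is finite, there is a deterministic $\bar\phi$ such that, with probability tending to $1$, $\sup_{\phi\geq 0}\mathcal{L}_n^{\bomega,w,v}(\phi)=\max_{0\leq\phi\leq\bar\phi}\mathcal{L}_n^{\bomega,w,v}(\phi)$, and likewise for the limiting function. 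On the compact interval $[0,\bar\phi]$ the convexity lemma gives uniform convergence of $-\mathcal{L}_n^{\bomega,w,v}$ to $-\mathcal{L}^{\bomega,w,v}$ (the endpoint $\phi=0$ handled via upper semicontinuity of concave functions), and uniform convergence on a compact set transfers to the maxima, which is the claim when $v>0$.

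It remains to dispose of the degenerate case $v=0$, where the bound is vacuous and the supremum over $\phi$ is approached only as $\phi\to\infty$, so the convexity-lemma route does not apply. Here I would compute directly: using $\min_{x}g(x,\bomega,\Delta,T)\leq\mathcal{M}_{g(.,\bomega,\Delta,T)}(x,\nu)\leq g(x,\bomega,\Delta,T)$ together with $\mathcal{M}_{g(.,\bomega,\Delta,T)}(x,\nu)\to g(x,\bomega,\Delta,T)$ as $\nu\downarrow 0$ (Proposition \ref{limits}), one checks that $\sup_{\phi\geq 0}\mathcal{L}_n^{\bomega,w,0}(\phi)$ equals the single empirical mean $\tfrac1n\sum_{i=1}^n\big[g(wZ_{0,i},\bomega,\Delta_i,T_i)-\Delta_i\log\lambda(T_i|\bomega)\big]+\tfrac12\eta w^2+\tfrac12\alpha\|\bomega\|^2$, which converges in probability by the weak law of large numbers to its expectation $\sup_{\phi\geq 0}\mathcal{L}^{\bomega,w,0}(\phi)$. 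The main obstacle throughout is exactly this interplay between the unbounded range of $\phi$ and the loss of coercivity at $v=0$: one must first manufacture a deterministic a-priori bound on the location of the maximizer to reduce to a compact interval before the convexity lemma can be invoked, and then treat by hand the boundary case in which no such bound exists because the maximum sits at infinity.
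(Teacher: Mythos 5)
Your proposal is correct and follows essentially the same route as the paper: concavity in $\phi$, coercivity of the limit function via the $\tau\to 0^+$ upper bound when $v>0$ (which is exactly the level-boundedness needed for Lemma \ref{lemma:min_conv}, whose proof is the compactification-plus-convexity-lemma argument you re-derive), and a separate direct treatment of $v=0$ where the supremum collapses to an empirical mean handled by the weak law of large numbers. The only place you are lighter than the paper is the $v=0$ case, where the paper makes the sandwich $\lim_{\phi\to\infty}\inf_\tau\mathcal{L}_n\leq\sup_\phi\inf_\tau\mathcal{L}_n\leq\sup_\phi\lim_{\tau\to 0^+}\mathcal{L}_n$ explicit and verifies via a derivative-at-$\kappa=0^+$ argument that the lower end attains the same limit; your "one checks" covers the same content.
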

\begin{proof}
Notice that $\mathcal{L}_n^{\bomega,w,v}(\phi)$ is concave, as the pointwise minima of concave functions. The same is true for $\mathcal{L}^{\bomega,w,v}(\phi)$ because $\mathcal{L}_n^{\bomega,w,v}(.)\xrightarrow[]{P}\mathcal{L}_n^{\bomega,w,v}(.)$ pointwise.

If $\phi=0$, then $\mathcal{L}_n^{\bomega,w,v>0}(0)\xrightarrow[]{P}\ \mathcal{L}^{\bomega,w,v>0}(0)$ for any $v$, as shown before.
Else $\phi >0$. We want to use again lemma \ref{lemma:min_conv}: if we show that $\lim_{\phi\rightarrow \infty}\mathcal{L}_n^{\bomega,w,v}(\phi) = - \infty$, then we are done, as the function is concave and level bounded.
It holds by definition that 
\begin{equation}
   \mathcal{L}^{\bomega,w,v}(\phi) \leq \lim_{\tau \rightarrow 0^+}\mathcal{L}(\bomega,w,v,\phi,\tau) 
\end{equation}
where
\begin{eqnarray}
    \lim_{\tau \rightarrow 0^+}\mathcal{L}(\bomega,w,v,\phi ,\tau)  &=&   \lim_{\tau \rightarrow 0^+}\mathbb{E}\Big[\mathcal{M}_{ g (.,\bomega,\Delta,T)}\big(wZ_{0}  +v Q,\tau/\phi\big)-\Delta\log \lambda(T|\bomega)\Big] -\phi v\sqrt{\zeta} +\frac{1}{2}\alpha\|\bomega\|^2=\nonumber \\
    &=&\mathbb{E}\Big[g\big(wZ_{0}  +v Q,\bomega,\Delta,T\big)-\Delta\log \lambda(T|\bomega)\Big] -\phi v\sqrt{\zeta} +\frac{1}{2}\alpha\|\bomega\|^2\ .
\end{eqnarray}
If  $v>0$, then $\lim_{\phi\rightarrow\infty} \mathcal{L}^{\bomega,w,v}(\phi) = -\infty $, since the expectation of $g$ exists finite for fixed $\bomega,w,v$, and lemma \ref{lemma:min_conv} gives 
\begin{equation}
    \underset{\phi\geq0}{\sup} \ \mathcal{L}_n^{\bomega,w,v}(\phi)\xrightarrow[]{P} \underset{\phi\geq 0}{\sup} \ \mathcal{L}^{\bomega,w,v}(\phi) \ .
\end{equation}
On the other hand if $v =0$, it is not straightforward to conclude the desiderata.
Observe that 
\begin{equation}
    \lim_{\phi\rightarrow \infty}\ \underset{\tau>0}{\inf}\mathcal{L}(\bomega,w,0,\phi ,\tau)  \leq \underset{\phi\geq0}{\sup} \ \underset{\tau>0}{\inf}\mathcal{L}(\bomega,w,0,\phi ,\tau)  \leq\underset{\phi\geq0}{\sup} \lim_{\tau \rightarrow 0^+}\mathcal{L}(\bomega,w,0,\phi ,\tau) 
\end{equation}
and 
\begin{equation}
    \lim_{\phi\rightarrow \infty}\ \underset{\tau>0}{\inf}\mathcal{L}_n(\bomega,w,0,\phi ,\tau)  \leq \underset{\phi\geq0}{\sup} \ \underset{\tau>0}{\inf}\mathcal{L}_n(\bomega,w,0,\phi ,\tau)  \leq \underset{\phi\geq0}{\sup}\lim_{\tau \rightarrow 0^+}\mathcal{L}_n(\bomega,w,0,\phi ,\tau)  \ .
\end{equation}
We now show that 
\begin{eqnarray}
    &&\lim_{\phi\rightarrow \infty}\ \underset{\tau>0}{\inf}\mathcal{L}_n(\bomega,w,0,\phi ,\tau)\xrightarrow[]{P}\lim_{\phi\rightarrow \infty}\ \underset{\tau>0}{\inf}\mathcal{L}(\bomega,w,0,\phi ,\tau)\\
    &&\lim_{\tau \rightarrow 0^+}\mathcal{L}_n(\bomega,w,0,\phi ,\tau) \xrightarrow[]{P} \lim_{\tau \rightarrow 0^+}\mathcal{L}(\bomega,w,0,\phi ,\tau)  \ .
\end{eqnarray}
Which implies
\begin{equation}
     \underset{\phi\geq0}{\sup} \ \underset{\tau>0}{\inf}\mathcal{L}_n(\bomega,w,0,\phi ,\tau) \xrightarrow[]{P}\underset{\phi\geq0}{\sup} \ \underset{\tau>0}{\inf}\mathcal{L} (\bomega,w,0,\phi ,\tau)
\end{equation}
and as a consequence the desired.

By the weak law of large numbers, we have that
\begin{eqnarray}
    &&\lim_{\tau \rightarrow 0^+}\mathcal{L}_n(\bomega,w,v=0,\phi ,\tau) = \frac{1}{n}\sum_{i=1}^n g\big(wZ_{0},\bomega,\Delta,T\big) - \Delta_i\log \lambda(T_i|\bomega)+\frac{1}{2}\alpha\|\bomega\|^2\nonumber \\
    &&\hspace{1cm}\xrightarrow[]{P} \mathbb{E}_{\Delta,T,Z_0,Q}\Big[g\big(wZ_{0},\bomega,\Delta,T\big)-\Delta\log \lambda(T|\bomega)\Big]  +\frac{1}{2}\alpha\|\bomega\|^2=  \lim_{\tau \rightarrow 0^+}\mathcal{L}(\bomega,w,v=0,\phi ,\tau)\ .
\end{eqnarray}
The \say{other side} requires more work. Notice that 
\begin{eqnarray}
    \lim_{\phi\rightarrow \infty}\ \underset{\tau>0}{\inf}\mathcal{L}_n(\bomega,w,v=0,\phi,\tau) &=& \lim_{\phi\rightarrow \infty}\underset{\kappa>0}{\inf}\frac{1}{n}\sum_{i=1}^n \mathcal{M}_{ g (.,\bomega,\Delta_i,T_i)}\big(wZ_{0,i},\kappa\big) -\Delta_i\log \lambda(T_i|\bomega) +  \nonumber\\
    &+& \phi^2\kappa /2  +\frac{1}{2}\alpha\|\bomega\|^2
\end{eqnarray}
The sequence of functions $f_n^{\bomega,w,\phi}(\kappa):= \frac{1}{n}\sum_{i=1}^n \mathcal{M}_{ g (.,\bomega,\Delta_i,T_i)}\big(wZ_{0,i},\kappa\big)-\Delta_i\log \lambda(T_i|\bomega) + \phi^2\kappa /2$ : 
\begin{itemize}
    \item are convex in $\kappa$, 
    \item  $f_n'(0^+)>0$, for $\phi,n$ sufficiently large, since $\mathbb{V}\Big[ \mathcal{M}_{ g (.,\bomega,\Delta,T)}\big(wZ_{0},\kappa\big)-\Delta\log \lambda(T|\bomega) \Big]<\infty$
    \item $\lim_{\kappa\rightarrow \infty}f^{\bomega,w,\phi}_n(\kappa) = \infty$ \ .
\end{itemize}
Because of convexity of $f^{\bomega,w,\phi}_n$, these imply that the infimum is at $\kappa = 0$, for sufficiently large $n,\phi$. Hence,
\begin{equation}
         \lim_{\phi\rightarrow \infty}\ \underset{\tau>0}{\inf}\mathcal{L}_n(\bomega,w,0,\phi ,\tau)  \xrightarrow[]{P} \mathbb{E}_{T,Z_0,Q}\Big[g\big(wZ_{0},\bomega,\Delta,T\big)-\Delta\log \lambda(T|\bomega)\Big] + \frac{1}{2}\alpha\|\bomega\|^2\ .
\end{equation}
Similarly
\begin{equation}
    \lim_{\phi\rightarrow \infty}\underset{\tau>0}{\inf}\mathcal{L}(\bomega,w,0,\phi ,\tau)  =  
     \lim_{\phi\rightarrow \infty}\underset{\kappa>0}{\inf}\mathbb{E}_{T,Z_0,Q}\Big[\mathcal{M}_{ g (.,\bomega,\Delta,T)}\big(wZ_{0},\kappa\big)-\Delta \log \lambda(T|\bomega)\Big] +\phi^2\kappa /2 + \frac{1}{2}\alpha\|\bomega\|^2   \ .
\end{equation}
The function $f^{\bomega,w,\phi}(\kappa) := \mathbb{E}_{T,Z_0,Q}\Big[\mathcal{M}_{ g (.,\bomega,\Delta,T)}\big(wZ_{0},\kappa\big)-\Delta\log \lambda(T|\bomega)\Big] +\phi^2\kappa /2$ : 
\begin{itemize}
    \item is convex in $\kappa$, 
    \item  $f'(0^+)>0$, for $\phi$ sufficiently large,
    \item $\lim_{\kappa\rightarrow \infty}f^{\bomega,w,\phi}(\kappa) = \infty$ \ .
\end{itemize}
Because of convexity of $f^{\bomega,w,\phi}$, these imply that the infimum is at $\kappa = 0$. Hence
\begin{equation}
    \lim_{\phi\rightarrow \infty}\underset{\kappa>0}{\inf}\mathcal{L}(\bomega,w,0,\phi ,\tau) = \mathbb{E}_{T,Z_0,Q}\Big[g\big(wZ_{0},\bomega,\Delta,T\big)-\Delta\log \lambda(T|\bomega)\Big]+ \frac{1}{2}\alpha\|\bomega\|^2
\end{equation}
and we have shown
\begin{eqnarray}
     &&\lim_{\phi\rightarrow \infty}\ \underset{\tau>0}{\inf}\mathcal{L}_n(\bomega,w,0,\phi ,\tau) = \frac{1}{n}\sum_{i=1}^n g\big(wZ_{0},\bomega,\Delta,T\big) \nonumber \\
     &&\hspace{1cm}\xrightarrow[]{P} \mathbb{E}_{T,Z_0,Q}\Big[g\big(wZ_{0},\bomega,\Delta,T\big)-\Delta\log \lambda(T|\bomega)\Big]+ \frac{1}{2}\alpha\|\bomega\|^2 = \lim_{\phi\rightarrow \infty}\underset{\kappa>0}{\inf}\mathcal{L}(\bomega,w,0,\phi ,\tau) \ .\nonumber 
\end{eqnarray}
\end{proof}

\subsection{Minimization over $\bomega, w, v$}
Let us denote $\mathcal{L}_n(\bomega,w,v):= \underset{\phi\geq 0}{\sup} \ \mathcal{L}_n^{\bomega,w,v>0}(\phi)$ and $\mathcal{L}(\bomega,w,v):=  \underset{\phi\geq 0}{\sup} \ \mathcal{L}^{\bomega,w,v}(\phi)$. 
\begin{proposition}
    The function $\mathcal{L}(\bomega,w,v)$ admits a unique minimizer for $\bomega \in \mathcal{S}_{\bomega}$, $w\in[0,C_{\bbeta}]$ and $v\in [0,C_{\bbeta}]$ and 
    \begin{equation}
    \underset{(\bomega,w,v)}{\min}\mathcal{L}_n(\bomega,w,v) \xrightarrow[n\rightarrow \infty]{P} \underset{(\bomega,w,v)}{\min}\mathcal{L}(\bomega,w,v) \ .
    \end{equation}
\end{proposition}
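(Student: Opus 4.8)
The plan is to reduce the claim to an application of the convexity lemma (Lemma~\ref{lemma:convexity}) on the compact convex domain $K := \mathcal{S}_{\bomega}\times[0,C_{\bbeta}]\times[0,C_{\bbeta}]$, complemented by a strong-convexity argument that supplies existence and uniqueness of the minimizer of the limit. The first step is to record convexity. For each fixed $\phi>0$ the map $(\bomega,w,v,\tau)\mapsto\mathcal{L}_n(\bomega,w,v,\phi,\tau)$ is jointly convex: the affine term $\phi(\tau/2-v\|\mathbf{G}\|/\sqrt n)$, the term $-\Delta_i\log\lambda(T_i|\bomega)$ (which is affine in $\bomega$ on each knot interval) and the ridge terms are convex, and the averaged Moreau term $\frac1n\sum_i\min_{\xi}\{\tfrac{\phi}{2\tau}(\xi-wZ_{0,i}-vQ_i)^2+g(\xi,\bomega,\Delta_i,T_i)\}$ is jointly convex because: (i) $g(\xi,\bomega,\Delta,T)=\sum_k\Psi_k(T)\rme^{\omega_k+\xi}-\Delta\xi$ is jointly convex in $(\xi,\bomega)$ (sum of exponentials of affine maps plus a linear term); (ii) the quadratic-over-linear function $(u,t)\mapsto u^2/t$ is jointly convex on $\mathbb{R}\times(0,\infty)$, hence so is $\tfrac{\phi}{2\tau}(\xi-wZ_{0,i}-vQ_i)^2$ in $(\xi,w,v,\tau)$ after composition with an affine map; and (iii) partial minimization over $\xi$ of a jointly convex function is convex in the remaining variables. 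The degenerate case $\phi=0$ reduces, as in the preceding subsection, to $\frac1n\sum_i\min_\xi g(\xi,\bomega,\Delta_i,T_i)-\Delta_i\log\lambda(T_i|\bomega)$, again convex in $(\bomega,w,v)$. Consequently $\mathcal{L}_n^{\bomega,w,v}(\phi)=\inf_{\tau>0}\mathcal{L}_n(\bomega,w,v,\phi,\tau)$ is convex in $(\bomega,w,v)$ for every $\phi\ge0$, and $\mathcal{L}_n(\bomega,w,v)=\sup_{\phi\ge0}\mathcal{L}_n^{\bomega,w,v}(\phi)$ is convex in $(\bomega,w,v)$ as a pointwise supremum of convex functions; the identical reasoning applies to $\mathcal{L}$. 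Properness follows since the Moreau envelope is bounded below by $\min_\xi g>-\infty$ and, by the preceding subsection, the supremum over $\phi$ is finite.

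Second, I would settle existence and uniqueness of the minimizer of $\mathcal{L}$ over $K$. The ridge contribution $\tfrac12\eta(v^2+w^2)+\tfrac12\alpha\|\bomega\|^2$ is $\min\{\eta,\alpha\}$-strongly convex and does not depend on $(\phi,\tau)$, so it passes unchanged through the inner $\inf_\tau$ and the outer $\sup_\phi$; hence $\mathcal{L}(\bomega,w,v)$ is the sum of a strongly convex function of $(\bomega,w,v)$ and a convex one, and is therefore strictly convex on $K$. It is finite on $K$ by the integrability bounds proved in the first appendix, and lower semicontinuous as a supremum of continuous functions, so Weierstrass' theorem on the compact set $K$ yields a minimizer, which strict convexity makes unique. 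This proves the first assertion of the proposition.

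Third, for the convergence of the minimum values I would invoke Lemma~\ref{lemma:convexity}. From the preceding subsection we have the pointwise convergence $\mathcal{L}_n(\bomega,w,v)\xrightarrow{P}\mathcal{L}(\bomega,w,v)$ for every $(\bomega,w,v)$, and the $\mathcal{L}_n(\cdot)$ are almost surely proper convex; applying the lemma on a slightly enlarged open convex set $E\supset K$ gives $\sup_{(\bomega,w,v)\in K}\big|\mathcal{L}_n(\bomega,w,v)-\mathcal{L}(\bomega,w,v)\big|\xrightarrow{P}0$. The elementary inequality $\big|\min_K\mathcal{L}_n-\min_K\mathcal{L}\big|\le\sup_K|\mathcal{L}_n-\mathcal{L}|$ then yields $\min_K\mathcal{L}_n\xrightarrow{P}\min_K\mathcal{L}$, which is the claim.

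I expect the main obstacle to be the joint-convexity bookkeeping of the first step, in particular verifying that the quadratic-over-linear (perspective) structure inside the Moreau envelope is jointly convex in $(w,v,\tau)$ at fixed $\phi$: without this the outer $\sup_\phi$ of the $\inf_\tau$ need not be convex, and Lemma~\ref{lemma:convexity} would not apply. A secondary, cosmetic point is that Lemma~\ref{lemma:convexity} is stated for open sets whereas $K$ has a boundary (the faces $w=0$ and $v=0$); this is handled by extending the functions to an open neighbourhood of $K$, where they remain defined and convex and where the pointwise-convergence and integrability arguments go through verbatim for $w,v$ in a bounded interval around $[0,C_{\bbeta}]$, or else by working in the relative interior.
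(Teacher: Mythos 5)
Your proposal is correct and follows essentially the same route as the paper: joint convexity of the auxiliary objective, pointwise convergence in probability, the convexity lemma to upgrade to uniform convergence on a compact set, and strict convexity of $\mathcal{L}$ to get a unique minimizer and hence convergence of the minima. The one place where you improve on the paper is the uniqueness step: you obtain strong convexity of $\mathcal{L}(\bomega,w,v)$ directly from the ridge terms $\tfrac12\eta(v^2+w^2)+\tfrac12\alpha\|\bomega\|^2$, which pass unchanged through $\inf_\tau$ and $\sup_\phi$, whereas the paper computes second derivatives of the data-dependent part and needs the extra condition $P[t_{\mu}<T<t_{\mu+1}]\geq\delta>0$; your careful verification of joint convexity of the perspective-plus-$g$ structure, and your explicit handling of the open-set hypothesis of the convexity lemma (the paper instead assumes the minimizer lies in the interior), fill in details the paper leaves implicit.
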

\begin{proof}
The functions $\mathcal{L}_n$ are convex in their arguments, as the pointwise maxima  of convex functions. The same is true for $\mathcal{L}$, furthermore,  $\mathcal{L}_n\xrightarrow[]{P}\mathcal{L}$ pointwise. Let us denote $\mathcal{S}_{\bomega,w,v} =  \mathcal{S}_{\bomega}\times [0,C_{\bbeta}]\times [0,C_{\bbeta}]$. 
The function $\mathcal{L}(\bomega,w,v)$ is strongly convex in $w,v$ and strictly convex in $\bomega$ (it is sufficient to compute the second derivative, which is always non-negative on $\mathcal{S}_{\bomega}$). In particular, $\mathcal{L}(\bomega,w,v)$ is strongly convex in its arguments if we insist that $\forall \mu, \ P[t_{\mu}<T<t_{\mu+1}]\geq\delta>0$ and hence admits a unique minimizer.
If $\bomega_{\star},w_{\star},v_{\star}$ is in the interior of $\mathcal{S}_{\bomega,w,v}$, then we can readily conclude via the convexity lemma\ref{lemma:convexity}, that
\begin{equation}
    \underset{(\bomega,w,v) }{\min}\mathcal{L}_n(\bomega,w,v) \xrightarrow[n\rightarrow\infty]{P} \underset{(\bomega,w,v)}{\min}\mathcal{L}(\bomega,w,v) \ .
\end{equation}
\end{proof}

\section{CONVERGENCE IN PROBABILITY OF THE MINIMIZER}
\label{appendix:conv_minimizer}
In order to prove (\ref{w_n},\ref{v_n},\ref{lambda_n}), we want to use Theorem \ref{CGMT}, and, in particular, implication (\ref{asymptotic_cgmt}). To do so, let us define 
\begin{equation}
    \mathcal{S}_{\epsilon} := \big\{ (\bbeta,\bomega) \in \mathcal{S}_{\bbeta}\times \mathcal{S}_{\bomega}\\ : \ \big|w_n-w_{\star}\big|<\epsilon, \big|v_n-v_{\star}\big|<\epsilon, \big|\bomega-\bomega_{\star}\big|<\epsilon\big\}
\end{equation}
with $\mathcal{S}_{\bbeta},\mathcal{S}_{\bomega}$ two compact subsets of $\mathbb{R}^{p}$ and $\mathbb{R}^d$, respectively,  and $w_n,v_n$ defined as 
\begin{equation}
        w_n := \frac{\bbeta_0'\bbeta}{\|\bbeta_0\|}, \quad v_n :=\big\|\mathbf{P}_{\perp\bbeta_0}\bbeta\big\|= \Big\|\Big(\bm{I} - \frac{\bbeta_0\bbeta_0'}{\|\bbeta_0\|^2}\Big)\bbeta\Big\|  \ .
\end{equation}
By the convexity lemma we have uniform convergence $\mathcal{L}_n(\bomega,w,v)\xrightarrow[n\rightarrow\infty]{P}\mathcal{L}(\bomega,w,v)$ for $(\bomega,w,v) \in \mathcal{B}\subset  \mathcal{S}_{\bbeta}\times \mathcal{S}_{\bomega}$, with $\mathcal{B}$ compact. As a consequence, since $\mathcal{S}_{\epsilon}^c:= \mathcal{S}_{\bbeta}\times \mathcal{S}_{\bomega}\setminus \mathcal{S}_{\epsilon}$ is compact, we have that
\begin{equation}
    \underset{(\bomega,w,v) \in \mathcal{S}_{\epsilon}^c}{\min}\mathcal{L}_n(\bomega,w,v) \xrightarrow[n\rightarrow\infty]{P} \underset{(\bomega,w,v) \in \mathcal{S}_{\epsilon}^c}{\min}\mathcal{L}(\bomega,w,v) \ .
\end{equation}
The function $\mathcal{L}$ above is strongly convex in its arguments, hence it  has a unique minimizer $(w_{\star},v_{\star},\bomega_{\star})$ as defined in (\ref{w_n},\ref{v_n},\ref{lambda_n}) respectively.
In this case it must hold that
\begin{equation}
    \underset{(\bomega,w,v) \in \mathcal{S}_{\epsilon}^c}{\min}\mathcal{L}(\bomega,w,v) > \underset{(\bomega,w,v)}{\min}\mathcal{L}(\bomega,w,v)  \ .
\end{equation}
Via Theorem \ref{CGMT} implication (\ref{asymptotic_cgmt}) this implies that 
\begin{equation}
    \lim_{n\rightarrow\infty} P\Big[(\hat{\bbeta}_n,\hat{\bomega}_n)\in  \mathcal{S}_{\epsilon}\Big] = 1 
\end{equation}
which is the desired.

\end{document}